\newenvironment{owndesc}%
{\begin{description}[leftmargin = 0.2cm, labelsep = 0.2cm]}
  {\end{description}}
\newcommand{\mc}{\mathcal}
\newcommand{\lc}[1]{\ensuremath{_{#1}}} 
\newcommand{\med}{\ensuremath{\mathrm{med}}}
\newcommand{\CQinfer}{\ensuremath{C^\textnormal{iso}\lc{6}}-\ensuremath{Q\lc{3}}-inferring\xspace}
\newcommand{\QQinfer}{\ensuremath{Q^-\lc{3}}-\ensuremath{Q\lc{3}}-inferring\xspace}
\newcommand{\cd}[2]{\ensuremath{\,\lc{#1}\!\circledast\lc{#2}\!}}
\newcommand{\Gin}[1]{G^\textnormal{in}\lc{#1}}
\newcommand{\Gout}[1]{G^\textnormal{out}\lc{#1}}
\newcommand{\Hin}[1]{H^\textnormal{in}\lc{#1}}
\newcommand{\Hout}[1]{H^\textnormal{out}\lc{#1}}
\newcommand{\GinS}{G^\textnormal{in}}
\newcommand{\GoutS}	{G^\textnormal{out}}
\newtheorem{theorem}{Theorem}[section]
\newtheorem{lemma}[theorem]{Lemma}
\newtheorem{proposition}[theorem]{Proposition}
\newtheorem{corollary}[theorem]{Corollary}
\theoremstyle{definition}
\newtheorem{definition}[theorem]{Definition}
\newtheorem{obs}[theorem]{Observation}
\newtheorem{remark}[theorem]{Remark}
\newtheorem*{conjecture}{Conjecture}
\crefname{theorem}{Theorem}{Thm.}
\Crefname{theorem}{Theorem}{Theorems}
\crefname{lemma}{Lemma}{Lemma}
\Crefname{lemma}{Lemma}{Lemmas}
\crefname{proposition}{Proposition}{Prop.}
\Crefname{proposition}{Proposition}{Propositions}
\crefname{corollary}{Corollary}{Cor.}
\Crefname{corollary}{Corollary}{Corollaries}
\crefname{definition}{Def.}{Def.}
\Crefname{definition}{Definition}{Definitions}
\crefname{figure}{Fig.}{Fig.}
\Crefname{figure}{Figure}{Figures}
\crefname{equation}{Equ.}{Equ.}
\Crefname{equation}{Equation}{Equations}
\crefname{example}{Example}{Exa.}
\Crefname{example}{Example}{Examples}
\crefname{algorithm}{Algorithm}{Alg.}
\Crefname{algorithm}{Algorithm}{Algorithms}
\crefname{obs}{Observation}{Obs.}
\Crefname{obs}{Observation}{Observations}
\crefname{line}{Line}{Line}
\Crefname{line}{Line}{Lines}
\crefname{section}{Section}{Sec.}
\Crefname{section}{Section}{Sections}
\renewcommand{\theenumi}{(\arabic{enumi})}
\renewcommand{\theenumii}{(\alph{enumii})}
\renewcommand\p@enumii{\theenumi}
\renewcommand\p@enumiii{\theenumi\theenumii}
\providecommand{\keywords}[1]{\textbf{\textit{Keywords: }} #1}
\title{Planar Median Graphs and Cubesquare-Graphs}
\author[1-2]{Carsten R.\ Seemann}
\author[3]{Vincent Moulton}
\author[1,4-7]{Peter F.\ Stadler}
\author[8]{Marc Hellmuth}
\affil[1]{Max Planck Institute for Mathematics in the Sciences,
	Inselstra{\ss}e 22, D-04103 Leipzig, Germany}
\affil[2]{Bioinformatics Group, Department of Computer Science and
	Interdisciplinary Center for Bioinformatics, University of Leipzig,
	H{\"a}rtelstra{\ss}e 16-18, D-04107 Leipzig, Germany}
\affil[3]{School of Computing Sciences, University of East Anglia,
	Norwich, NR4 7TJ, UK}
\affil[4]{Bioinformatics Group, Department of Computer Science;
	Interdisciplinary Center for Bioinformatics; German Centre for
	Integrative Biodiversity Research (iDiv) Halle-Jena-Leipzig; Competence
	Center for Scalable Data Services and Solutions Dresden-Leipzig; Leipzig
	Research Center for Civilization Diseases; and Centre for Biotechnology
	and Biomedicine, Leipzig University, H{\"a}rtelstra{\ss}e 16-18,
	D-04107 Leipzig, Germany}
\affil[5]{Institute for Theoretical Chemistry, University of Vienna,
	W{\"a}hringerstra{\ss}e 17, A-1090 Wien, Austria}
\affil[6]{Facultad de Ciencias, Universidad Nacional de Colombia,
	Sede Bogot{\'a}, Colombia}
\affil[7]{The Santa Fe Institute, 1399 Hyde Park Rd., Santa Fe, NM
	87501, United States}
\affil[8]{Department of Mathematics, Faculty of Science, 
	Stockholm University, SE - 106 91 Stockholm, Sweden}
\begin{document}

\date{\ }

\maketitle

\abstract{ 
Median graphs are connected graphs in which for all three vertices there
is a unique vertex that belongs to shortest paths between each pair of
these three vertices.  In this paper we provide several novel
characterizations of \emph{planar} median graphs. More specifically, we
characterize when a planar graph $G$ is a median graph in terms of
forbidden subgraphs and the structure of isometric cycles in $G$, and
also in terms of subgraphs of $G$ that are contained inside and outside
of 4-cycles with respect to an arbitrary planar embedding of $G$. These
results lead us to a new characterization of planar median graphs in
terms of \emph{cubesquare-graphs} that is, graphs that can be obtained by
starting with cubes and square graphs, and iteratively replacing 4-cycle
boundaries (relative to some embedding) by cubes or square-graphs. As a
corollary we also show that a graph is planar median if and only if it
can be obtained from cubes and square-graphs by a sequence of
``square-boundary'' amalgamations. These considerations also lead to an
$\mc{O}(n\log n)$-time recognition algorithm to compute a decomposition
of a planar median graph with $n$ vertices into cubes and square-graphs.
}
\smallskip

\noindent
\keywords{Planar Median Graph; Hybercube; Square-Graph;
	QS-graph; Characterization; Recognition Algorithm
}

\sloppy

\section{Introduction}
A median graph is a connected graph, in which, for each triple of vertices
there exists a unique vertex, called the \emph{median}, simultaneously
lying on shortest paths between each pair of the triple
\cite{mulder1998metric}.  While the term \emph{median graph} was introduced
by \citet{Nebesky:71} in 1971, they have been studied at least since the
1940's \cite{Avann:61,Birkhoff:47}.  Today, a great deal is known about
median graphs including several characterizations, see
e.g. \cite{Bandelt:08,klavzar1999median}. Median graphs naturally arise in
several fields of mathematics, for example, in algebra
\cite{bandelt1983median}, metric graph theory \cite{Bandelt:08} and
geometry \cite{chepoi2000graphs}, and they have practical applications in
areas such as social choice theory \cite{Bandelt:84,Day:03}, phylogenetics
(where Buneman graphs are of relevance; see \cite{Dress:97}), and forensic
science \cite{parson2007empop}. It is therefore natural to develop
approaches to better understand structural properties of median graphs, as
well as their subclasses.

Special classes of median graphs include, for example, \emph{trees},
\emph{square-graphs} (see \Cref{sec:preli} and e.g.\ \cite{Bandelt2010})
and \emph{cube-free} median graphs
(e.g.\ \cite{chepoi2019distance}). Interestingly, to date the class of
\emph{planar} median graphs has received relatively little attention,
although it is natural to consider such graphs from both a mathematical and
an application oriented perspective (see e.g.\ \cite{BSH:21}). Indeed, in
contrast to the plethora of characterizations available for median graphs,
so far only one direct characterization of planar median graphs has been
established by Peterin in \cite{Peterin2006}. In addition, the only other
results concerning planar median graphs that we are aware of are an
Euler-type formula for planar, cube-free median graphs \cite[Corollary
5]{klavzar1998euler}, and an algorithm for deciding in $\mc{O}(|V|+|E|)$
time whether or not a graph $G=(V,E)$ is a planar median graph \cite[Cor.\
3.4]{Imrich1999}.

Before proceeding with stating our results, it is informative to briefly
recall Peterin's characterization for planar median graphs. Given a graph
$G$ and a connected subgraph $G'$ of $G$, the \emph{expansion} of $G$ with
respect to $G'$ is the graph $H$ obtained by attaching a disjoint copy
$G''$ of $G'$ to $G$ by adding edges between corresponding vertices of $G'$
and $G''$. Expansions play a key role in characterizing median graphs and
their relatives \cite{Mulder1990}. More specifically, defining an expansion
$H$ to be \emph{convex} if $G'$ is a convex subgraph of $G$, a graph is a
median graph if and only if it can be obtained from $K\lc{1}$ by a series
of convex expansions \cite{Mulder1978,Mulder1990}. Planar median graphs can
be characterized by further restricting expansions. Call $H$ a \emph{face
	expansion} if there is a planar embedding of $G$ such that all vertices
of $G'$ are incident with the same face of $G$. Then a graph is a planar
median graph if and only if can be obtained from an edge by a sequence of
convex face expansions \cite{Peterin2006}.

In this paper, we shall characterize planar median graphs in an alternative
way by considering amalgamations. This has the advantage of allowing us to
decompose the graph into simpler building blocks. A graph $G$ is said to be
an \emph{amalgam} of two induced subgraphs $G_1$ and $G_2$ if their union
is $G$ and their intersection $G_1\cap G_2$ is non-empty
\cite{BH:91}. Amalgamation procedures differ by requiring certain
properties of $G_1$ and $G_2$ as subgraphs of $G$ and constraints imposed
on their intersection $G_1\cap G_2$. For instance, every median graph, can
be obtained by successive \emph{convex} amalgamations starting with
hypercubes \cite{isbell1980median,vV:84}, i.e., $G_1$ and $G_2$ are convex
subgraphs of their amalgam $G$ along $G_1\cap G_2$. Note that amalgamations
and expansions are closely related for median graphs (see
e.g.\ \cite[Theorem 7]{mulder1998metric}). Similar amalgamation results have
been proven for quasi-median graphs \cite[Theorem 1]{bandelt1994quasi} and
pseudo-median graphs \cite[Theorem 18]{bandelt1994quasi} (in terms of
``gated'' amalgamations). In this paper, we shall show that planar median
graphs can be obtained by starting with cubes and square-graphs and
iteratively amalgamating along the boundary of certain faces in some planar
embedding of the resulting graphs. 
This gives new insights into the fundamental properties of planar median graphs as
the structure of the basic building blocks (square-graphs and cubes) is very
well-understood \cite{Bandelt2010,harary1988survey}.
As we shall discuss below, this
approach is related to the 2-face expansions that are used in
\cite{Desgranges:17,Peterin:08} to characterize planar partial cubes.

The rest of this paper is organized as follows. After introducing the necessary
notation and reviewing some relevant results from the literature in \cref{sec:preli},
in \cref{sec:planmed} we present two characterizations of median graphs amongst
planar graphs. The first one (Theorem~\ref{thm:planar-mg<->iso-cycles}) is given in
terms of forbidden subgraphs and isometric cycles of a planar graph; the second one
is given by the condition that every 4-cycle or square $C$ in an embedding of a
planar graph must divide the graph into a planar median graphs that lie inside and
outside of $C$ (Theorem~\ref{thm:if-inoutside->mg<->pmg}). The last results prompts
us to introduce an operation $\circledast$ that glues graphs together at boundary
squares. In particular, in \cref{sec:QS}, we introduce QS-graphs as those graphs that
can be constructed from cubes and square-graphs by iterative application of the
$\circledast$ operation. We then proceed to show that the QS-graphs are exactly the
planar median graphs that are not trees (Theorem~\ref{thm:mpg<->Qthree}). As a
corollary we then show that a graph is a planar median graph if and only if it can be
obtained from cubes and square-graphs by a sequence of square-boundary amalgamations
(Theorem~\ref{thm:amalgam}). Section~\cref{sec:algo} is devoted to deriving an
efficient algorithm for finding a sequence of $\circledast$ operations for
decomposing a planar median graph into its basic pieces (i.e. cubes and
square-graphs). In the last section we discuss some open problems and possible future
directions.


\section{Preliminaries} \label{sec:preli}

\paragraph{\textbf{Graphs}}
We consider undirected graphs $G=(V,E)$ with finite vertex set $V(G)=V$ and
edge set $E(G)=E\subseteq \binom{V}{2}$, i.e., without loops and multiple
edges. If $E = \binom{V}{2}$, the graph $G$ is \emph{complete} and denoted
by $K_{|V|}$.  A \emph{complete bipartite graph} $K\lc{m,n} = (V,E)$ is a
graph whose vertex set $V$ can be partitioned into two subsets $V\lc{1}$
and $V\lc{2}$ with $|V\lc{1}|=m$ and $|V\lc{2}|=n$ such that
$\{v,w\} \in E$ if and only if $v \in V_i$ and $w \in V_j$ with $i \ne j$.
We write $G'\subseteq G$ if $G'$ is a subgraph of $G$ and $G[W]$ for the
subgraph in $G$ that is induced by some subset $W\subseteq V$.  The
\emph{graph union $G\lc{1}\cup G\lc{2}$} (resp.\ \emph{graph intersection $G\lc{1}\cap G\lc{2}$}) of two graphs
$G\lc{1}=(V\lc{1},E\lc{1})$ and $G\lc{2}=(V\lc{2},E\lc{2})$ is the graph
$(V\lc{1}\cup V\lc{2},\, E\lc{1}\cup E\lc{2})$ (resp.\ $(V\lc{1}\cap V\lc{2},\, E\lc{1}\cap E\lc{2})$).  The graph $G-X$ with
$X\subseteq V$ is the graph obtained from $G$ after removal of the vertices
in $X$ and its incident edges.  A graph $G$ is
$\{G'\lc{1},\ldots,G'_m\}$-free if none of the graphs $G'_i$ is a subgraph
of $G$. For simplicity, we write that $G$ is $G'$-free instead of
$\{G'\}$-free.

A \emph{shortest} path between $v$ and $w$ in $G$ is denoted by
$P^\star_G(v,w)$.  The length $d\lc{G}(v,w)$ of a shortest path between two
vertices $v$ and $w$ is called \emph{distance of $v$ and $w$ (w.r.t.\
	$G$)}.  A subgraph $G'$ of $G$ is \emph{isometric} if
$d\lc{G'}(v,w) = d\lc{G}(v,w)$ for all vertices $v,w\in V(G')$, and
$G'\subseteq G$ is \emph{convex} if for any two vertices $v,w \in V(G')$
\emph{every} shortest path $P^\star\lc{G}(v,w)$ between $v$ and $w$ is a
subgraph of $G'$. Clearly, every convex subgraph of $G$ is an isometric and
induced subgraph of $G$.
A graph $G$ is  \emph{$k$-connected} (for $k \in \mathbb{N}$) if
$|V(G)| > k$ and $G-X$ is connected for every set $X \subseteq V$ with
$|X| < k$. 

A \emph{cycle} is a connected graph in which every vertex has degree two.
The length of a cycle $C$ is the number of edges or equivalently, the
number of vertices in $C$.  A cycle $C\lc{n}$ of length $n\ge 3$ is called
an \emph{$n$-cycle}.  A $4$-cycle is also called a \emph{square}.  A graph
that does not contain cycles is \emph{acyclic} and, otherwise,
\emph{cyclic}.  A \emph{cogwheel $M_n$} consists of a cycle $C_n$ where
$n\ge 8$ is even and a ``central'' vertex that is adjacent to every second
vertex of this cycle. A \emph{suspended cogwheel} $M_n^*$ is obtained from
the cogwheel $M_n$ by adding an additional vertex adjacent to the central
vertex of $M_n$.

A connected acyclic graph $T=(V,E)$ is a \emph{tree}. A tree is
\emph{rooted} if there is a distinguished vertex $\rho\in V$ called the \emph{root of
	$T$}.  A \emph{(rooted) forest} is a graph whose connected components are
(rooted) trees.  For a rooted forest $T$, we say that vertex $v$ of $T$, is
\emph{at level $i$} if the distance from the root of the connected
component in $T$ that contains $v$ to vertex $v$ is precisely $i$. Hence,
all roots of the connected components of $T$ are at level $0$.

The \emph{Cartesian product $G\lc{1}\Box G\lc{2}$} of two graphs
$G\lc{1}=(V\lc{1},E\lc{1})$ and $G\lc{2}=(V\lc{2},E\lc{2})$ is the graph
with vertex set $V(G\lc{1}\Box G\lc{2})=V\lc{1}\times V\lc{2}$, and where
$\{(u,u'),\, (v,v')\} \in E(G\lc{1}\Box G\lc{2})$ precisely if either $u=v$
and $\{u',v'\}\in E\lc{2}$ or $u'=v'$ and $\{u,v\}\in E\lc{1}$.  The
Cartesian product is associative and commutative \cite{HammackEtAl2011},
which allows us to write $\Box_{i=1}^n G_i$ for the Cartesian product of
the graphs $G_1,\dots,G_n$.  An $n$-dimensional \emph{hypercube} $Q_n$ is
the $n$-fold Cartesian product $\Box_{i=1}^n K_2$.  A $Q\lc{3}$ is called
\emph{cube}.  The subgraph $Q^-\lc{3}$ of a cube $Q\lc{3}$ is obtained from
this $Q\lc{3}$ by removing one vertex and its incident edges.  A graph $G$
is \emph{\CQinfer} if for each isometric $C\lc{6}$ in $G$ there is a cube
$Q\lc{3} \subseteq G$ such that $C\lc{6}\subseteq Q\lc{3}$.  Analogously, a
graph $G$ is \emph{\QQinfer} if for each $Q^-\lc{3}$ in $G$ there is a cube
$Q\lc{3} \subseteq G$ such that $Q^-\lc{3}\subseteq Q\lc{3}$.

We now provide here a simple result for later reference.
\begin{lemma} \label{lem:C4convex} 
	Let $G$ be a $K\lc{3}$-free graph.
	Then, every $4$-cycle in $G$ is convex if and only if $G$ is
	$K\lc{2,3}$-free.
\end{lemma}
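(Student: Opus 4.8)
The plan is to reduce convexity of a $4$-cycle to a statement about common neighbours of its two pairs of opposite vertices, and then to recognise that a ``bad'' common neighbour is exactly what produces a $K\lc{2,3}$. First I would record two consequences of $K\lc{3}$-freeness: every $4$-cycle $C$ in $G$ is an \emph{induced} subgraph (a chord of $C$ would, together with two edges of $C$, form a triangle), and for opposite vertices $u,v$ of $C$ we have $u\not\sim v$, hence $d\lc{G}(u,v)=2$. Consequently, when testing convexity of $C$ it suffices to look at the two pairs of opposite vertices, since for an adjacent pair the unique shortest path is the connecting edge, which lies in $C$; and for an opposite pair every shortest path has length $2$, i.e.\ is of the form $u\,x\,v$ for a common neighbour $x$ of $u$ and $v$.

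For the ``if'' direction, assume $G$ is $K\lc{2,3}$-free and let $C=v\lc{1}v\lc{2}v\lc{3}v\lc{4}v\lc{1}$ be a $4$-cycle. I would show every shortest $v\lc{1}$-$v\lc{3}$ path lies in $C$: such a path is $v\lc{1}\,x\,v\lc{3}$ for a common neighbour $x$ of $v\lc{1},v\lc{3}$, and if $x\notin\{v\lc{2},v\lc{4}\}$ then $v\lc{2},v\lc{4},x$ are three \emph{distinct} common neighbours of $v\lc{1}$ and $v\lc{3}$ (distinctness uses that $G$ has no loops and that the vertices of $C$ are distinct), so the six edges $v\lc{1}v\lc{2},v\lc{1}v\lc{4},v\lc{1}x,v\lc{3}v\lc{2},v\lc{3}v\lc{4},v\lc{3}x$ exhibit a $K\lc{2,3}$ subgraph — a contradiction. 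The same argument applies to $\{v\lc{2},v\lc{4}\}$, so $C$ is convex.

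For the ``only if'' direction I would argue contrapositively: suppose $G$ contains a $K\lc{2,3}$ (as a subgraph) with parts $\{x,y\}$ and $\{u,v,w\}$. Then $C=xuyvx$ is a $4$-cycle in $G$, and $K\lc{3}$-freeness gives $x\not\sim y$, hence $d\lc{G}(x,y)=2$; but $x\,w\,y$ is a shortest $x$-$y$ path not contained in $C$ (since $w\notin V(C)$), so $C$ is not convex. This gives the desired equivalence.

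I do not expect any real obstacle: the result is elementary, and the only points that need a little care are (i) using $K\lc{3}$-freeness to guarantee that a diagonal of a $4$-cycle has endpoints at distance exactly $2$ (so shortest paths between opposite vertices have length $2$), and (ii) checking in the ``if'' direction that the three common neighbours produced are genuinely distinct and that only the six bipartite edges of $K\lc{2,3}$ are needed — any additional adjacencies among $\{v\lc{2},v\lc{4},x\}$ would be forbidden by $K\lc{3}$-freeness anyway, but are irrelevant since $K\lc{2,3}$ is only required as a subgraph.
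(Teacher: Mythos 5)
Your proof is correct and follows essentially the same route as the paper's: both directions rest on the observation that $K\lc{3}$-freeness forces opposite vertices of a square to be at distance exactly $2$, so that a shortest path leaving the square together with the square itself yields a $K\lc{2,3}$, and conversely any $K\lc{2,3}$ contains a non-convex square. Your write-up merely makes explicit some details (distinctness of the three common neighbours, reduction to opposite pairs) that the paper leaves implicit.
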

\begin{proof}
	Let $G=(V,E)$ be a $K\lc{3}$-free graph. By contraposition, assume that
	there is a $4$-cycle $C\subseteq G$ which is not convex.  Thus,
	there are two vertices $v,w \in V(C)$ for which there is a shortest
	path $P^\star_G(v,w)$ that is not contained in $C$.  Since
	$E\subseteq \binom{V}{2}$ and $G$ is $K\lc{3}$-free, $d_G(v,w)=2$.
	Consequently, the graph union $C \cup P^\star_G(v,w)$ forms a
	subgraph of $G$ that is isomorphic to a $K\lc{2,3}$.
	
	Conversely, assume that $G$ is not $K\lc{2,3}$-free.  Then, there is a square
	$C\subseteq K\lc{2,3} \subseteq G$ which is not convex.
\end{proof}

\paragraph{\textbf{Convex Hull and Shortest-Path-Extension (SPE)}}
For a subgraph $G'$ of $G$, the \emph{convex hull $\mc{H}(G')$ of $G'$
	(w.r.t.\ $G$)} is the intersection of all convex subgraphs $G''$ of $G$
with $G'\subseteq G''$.  Note that $\mathcal{H}(G')$ is a convex subgraph
of $G$ and that $\mathcal{H}(G') = G'$ for every convex subgraph $G'$ of
$G$.  A tool that will be useful in upcoming proof are
shortest-path-extensions.
\begin{definition} \label{def:SPE-seq-graph} Let $G'$ be some subgraph of
	$G$.  A \emph{shortest-path-extension (SPE) of $G'$ (w.r.t.\ $G$)} is
	obtained by the following procedure:
	\begin{enumerate}[noitemsep,nolistsep]
		\item Set $G'\lc{1} \coloneqq G'$, and set $i = 1$,
		\item If $G'_i$ is a convex subgraph of $G$, then we stop.
		Otherwise, there is a shortest path $P^\star_G(v,w)$ with
		$v,w \in V(G'\lc{i})$, which is not a subgraph of $G'_i$.  In this
		case, we set $G'_{i+1} \coloneqq G'_i \cup P^\star_G(v,w)$, increment
		$i$ and repeat Step 2.\smallskip
	\end{enumerate}
	Since we have
	$G'=G'\lc{1} \subsetneq G'\lc{2} \subsetneq G'\lc{3}\subsetneq \ldots
	\subseteq G$, and since $G$ is finite and convex (w.r.t.\ $G$), a
	shortest-path-extension of $G'$ must terminate.  We call the final
	sequence $\mathcal{S}(G') = (G'\lc{1}, G'\lc{2}, G'\lc{3},\ldots,G'_m)$,
	with $m\geq 1$, a \emph{SPE-sequence of $G'$} and the last graph $G'_m$
	in $\mathcal{S}(G')$ the \emph{SPE-graph of $G'$}.
\end{definition}

As shown next, the convex hull can be constructed by means of
SPE-sequences so that, in particular, the SPE-graph is well-defined.
\begin{lemma} \label{lem:charact-convexHull} Let $G$ be a graph and let
	$G'$ be a subgraph of $G$.  Then, the convex hull of $G'$ w.r.t.\ $G$ is
	equal to the SPE-graph of $G'$ w.r.t.\ $G$, and thus, the SPE-graph is
	unique.
\end{lemma}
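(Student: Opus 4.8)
The plan is to establish the two inclusions $\mc{H}(G') \subseteq G'_m$ and $G'_m \subseteq \mc{H}(G')$ for an \emph{arbitrary} SPE-graph $G'_m$ of $G'$; equality then follows, and since $\mc{H}(G')$ does not depend on any of the choices made in the SPE procedure, uniqueness of the SPE-graph is an immediate consequence.

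First I would record that, by the termination argument already given in \Cref{def:SPE-seq-graph}, the procedure stops with some graph $G'_m$, and that by the stopping criterion in Step 2 this $G'_m$ is a convex subgraph of $G$; moreover $G' = G'_1 \subseteq G'_m$. Since $\mc{H}(G')$ is by definition the intersection of all convex subgraphs of $G$ that contain $G'$, and $G'_m$ is one such subgraph, we obtain $\mc{H}(G') \subseteq G'_m$.

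For the reverse inclusion I would show by induction on $i$ that $G'_i \subseteq \mc{H}(G')$ for every $i \in \{1,\dots,m\}$. The base case $G'_1 = G' \subseteq \mc{H}(G')$ holds by definition of the convex hull. For the inductive step, assume $G'_i \subseteq \mc{H}(G')$ and recall that $G'_{i+1} = G'_i \cup P^\star_G(v,w)$ for some $v,w \in V(G'_i) \subseteq V(\mc{H}(G'))$. Since $\mc{H}(G')$ is convex in $G$, every shortest path between $v$ and $w$ in $G$ is a subgraph of $\mc{H}(G')$; in particular $P^\star_G(v,w) \subseteq \mc{H}(G')$, and hence $G'_{i+1} \subseteq \mc{H}(G')$. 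Setting $i = m$ gives $G'_m \subseteq \mc{H}(G')$.

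Combining the two inclusions yields $G'_m = \mc{H}(G')$. As the right-hand side is independent of the particular sequence of shortest-path insertions chosen during the construction, any two SPE-graphs of $G'$ coincide, so the SPE-graph is unique. I do not expect a substantial obstacle here; the only points needing a little care are that each $G'_i$ is genuinely a subgraph of $G$ (true, being a union of subgraphs of $G$), and that throughout we work with ``subgraph of'' rather than ``induced subgraph of'', so that the union $G'_i \cup P^\star_G(v,w)$ behaves as intended and the convexity of $\mc{H}(G')$ can be applied verbatim.
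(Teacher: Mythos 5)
Your proposal is correct and follows essentially the same route as the paper: both directions are obtained exactly as in the paper's proof, the only cosmetic difference being that you run the induction directly on $\mc{H}(G')$ (using that the hull is itself convex), whereas the paper runs it on an arbitrary convex $H'\supseteq G'$ and then specializes to $H'=\mc{H}(G')$. No gaps.
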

\begin{proof}
	Let $G'\subseteq G$, $\mc{H}(G')$ be the convex hull of $G'$ (w.r.t.\
	$G$) and $(G'\lc{1}, G'\lc{2}, G'\lc{3},\ldots,G'_m)$ be an SPE-sequence
	of $G'$ (w.r.t.\ $G$).  Furthermore, let $H'$ be some convex subgraph of
	$G$ such that $G' \subseteq H'$.  Note, such subgraph $H'$ exists, since
	$G$ is convex (w.r.t.\ $G$) and therefore, we may set $H'\coloneqq G$.
	We use induction on $i \in \{1,\ldots,m\}$ to show that every $G'_i$ is a
	subgraph of $H'$, and hence $G'_m\subseteq H'$.  For the base case,
	\cref{def:SPE-seq-graph}~(1) implies $G'\lc{1}=G'\subseteq H'$.
	
	Now, let us assume that $G'_k\subseteq H'$ for every
	$k\in \{1,\ldots,i\}, 1\le k<m$, and consider the graph $G'_{i+1}$.  By
	\cref{def:SPE-seq-graph}~(2), $G'_i\subseteq G'_{i+1}$.  Since
	$G'_i\subseteq H'$ it remains to show that all
	$v \in V(G'_{i+1})\setminus V(G'_i)$ and
	$e \in E(G'_{i+1})\setminus E(G'_i)$ are also contained in $H'$.  By
	definition, $G'_{i+1} = G'_i \cup P^\star_G(w,w')$ for some
	$w,w' \in V(G'\lc{i})$ where $P^\star_G(w,w')$ is a shortest path which
	is not a subgraph of $G'_i$.
	Let $v \in V(G'_{i+1})\setminus V(G'_i)$ and
	$e \in E(G'_{i+1})\setminus E(G'_i)$.  By construction, $v$ and $e$ must
	be contained in $P^\star_G(w,w')$.  Since $H'$ is convex (w.r.t.\ $G$)
	and $w,w' \in V(G'_i)\subseteq V(H')$, this shortest path
	$P^\star_G(w,w')$ must be a subgraph of $H'$.  Therefore, $v \in V(H')$
	and $e \in E(H')$. Consequently, $V(G'_{i+1})\subseteq V(H')$ and
	$E(G'_{i+1})\subseteq E(H')$, and thus, $G'_{i+1}\subseteq H'$.  By
	induction, we have $G'_m \subseteq H'$.
	
	Finally, since $H'$ was chosen arbitrarily and $\mathcal{H}(G')$ is a
	convex subgraph of $G$, we may set $H' \coloneqq \mathcal{H}(G')$ and
	conclude that $G'_m \subseteq \mathcal{H}(G')$.  By definition, the
	SPE-graph $G'_m$ is a convex subgraph of $G$ and $G' \subseteq G'_m$.
	Therefore, we have, by definition of the convex hull,
	$\mc{H}(G') \subseteq \mc{H}(G'_m)= G'_m$. In summary, $\mc{H}(G')=G'_m$.
	Since the convex hull by definition is unique, $G'_m$ is unique as well.
\end{proof}

\paragraph{\textbf{Planar Graphs, Faces and Boundaries}}
A \emph{planar} graph $G$ can be embedded in the plane such that its edges
intersect only at their endpoints (in particular, only in case they are
incident with the same endpoints). Such embeddings are called \emph{planar
	embeddings} of $G$.  A planar graph $G$ together with a planar embedding
$\pi$ of $G$ is called \emph{$\pi$-embedded}.  

Let $G$ be a $\pi$-embedded planar graph.
The connected regions in
$\mathbb{R}^2$ of the complement of  $G$ are
called \emph{faces}.  One of these faces is unbounded in
$\mathbb{R}^2$ and is called the \emph{outer} face, while all
other faces are bounded in $\mathbb{R}^2$. These are called
\emph{inner} faces.
The subgraph of $G$ that encloses a face $F$ is said to \emph{bound} $F$
and is called the \emph{boundary} of $F$. If $G'\subseteq G$ bounds
an inner (resp., outer) face it is called \emph{inner} (resp., \emph{outer})
\emph{boundary} of $G$.  Note, by definition, boundaries
are not part of a face. However, a face is said to be \emph{incident} with
the vertices and edges of its boundary.  Correspondingly, the vertices 
of $G$ that are incident with the outer face are called
\emph{outer} vertices, and every other vertex, i.e., every vertex that is
not incident to the outer face is called \emph{inner} vertices. The set
$\mathring{V}(G)$ denotes the set of inner vertices of $G$. Note that 
outer vertices can be incident to  inner faces.  If $G$  
has different faces with the same boundary, then $G$ must be a cycle
\cite[Lemma~4.2.5]{Diestel2017}.  Note, this is the only case where the
inner and outer boundary coincide. In all other cases, different faces of a
planar embedded graph, have different boundaries.
A planar graph $G$ is \emph{outer-planar} if $G$ can be $\pi$-embedded in such a way 
that all vertices of $G$ are outer vertices (i.e., $\mathring{V}(G) = \emptyset$) \cite{Chartrand:67}. 
In particular, cycles, trees and $K\lc{2}\Box P_n$ are outer-planar graphs.

Every planar graph has, in particular, an embedding on a 2-sphere $\mathbb{S}^2$.  This
observation immediately implies that every bounded region can be chosen as
the outer face, see e.g.\ \cite[Sec.\ 4.3]{Diestel2017} for more
information.    We summarize the latter in
\begin{obs} \label{obs:cycle-bound}
	Let $G$ be a $\pi$-embedded planar
	graph and $G'\subseteq G$ be a boundary of $G$.  Then, there is a planar
	embedding of $G$ such that $G'$ is an inner boundary as well as a planar
	embedding of $G$ such that $G'$ is the outer boundary of $G$.
\end{obs}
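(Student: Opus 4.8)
The plan is to pass from the plane to the $2$-sphere $\mathbb{S}^2$ — where no face is singled out as the ``outer'' one — and then project stereographically back to $\mathbb{R}^2$ from a cleverly chosen point, so that the face bounded by $G'$ becomes unbounded (or, alternatively, bounded). Concretely, fix a point $N\in\mathbb{S}^2$ and let $\sigma\colon\mathbb{S}^2\setminus\{N\}\to\mathbb{R}^2$ be stereographic projection from $N$. Applying $\sigma^{-1}$ to the $\pi$-embedding of $G$ yields an embedding $\pi_{\mathbb{S}}$ of $G$ in $\mathbb{S}^2$ whose faces are in bijection with those of $\pi$ (the outer face of $\pi$ corresponding to the face of $\pi_{\mathbb{S}}$ whose closure contains $N$), and under this bijection the boundary subgraph of each face is unchanged. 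Conversely, for any point $p\in\mathbb{S}^2$ not on the drawing of $G$, stereographic projection $\sigma_p$ from $p$ gives a plane embedding of $G$ in which the face of $\pi_{\mathbb{S}}$ containing $p$ is the outer face, every other face is bounded, and all boundary subgraphs coincide with those of $\pi_{\mathbb{S}}$; I would simply cite \cite[Sec.~4.3]{Diestel2017} for these standard facts rather than reprove them.

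Granting this, the statement follows by bookkeeping. Let $G'$ be a boundary of $G$ in $\pi$ and let $F_{\mathbb{S}}$ be the face of $\pi_{\mathbb{S}}$ corresponding to a face bounded by $G'$. Choosing $p$ in the interior of $F_{\mathbb{S}}$ and applying $\sigma_p$ produces a planar embedding of $G$ in which $F_{\mathbb{S}}$ is the outer face, so that $G'$ is its outer boundary. If $G$ is a single cycle, then $G'=G$ already bounds both an inner and the outer face in $\pi$ (cf.\ \cite[Lemma~4.2.5]{Diestel2017}), so the ``inner'' part is immediate; otherwise $\pi_{\mathbb{S}}$ has some face $F'\neq F_{\mathbb{S}}$, and picking $p'$ in the interior of $F'$ and applying $\sigma_{p'}$ makes $F'$ the outer face and turns $F_{\mathbb{S}}$ into a bounded face, so $G'$ becomes an inner boundary.

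The only step with genuine mathematical content — hence the one to be careful about — is the claim concerning $\sigma_p$: that re-rooting the spherical embedding at an interior point of a chosen face yields a legitimate plane embedding of $G$ with exactly the same combinatorial face/boundary structure, only with a different face designated as unbounded. This is classical, so I would invoke \cite[Sec.~4.3]{Diestel2017}; the remainder is the routine face-correspondence argument above, together with the familiar degenerate case of a cycle, where the inner and outer boundaries coincide.
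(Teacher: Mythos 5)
Your proposal is correct and takes essentially the same route as the paper: the paper gives no formal proof of this observation, merely noting that every planar graph embeds on $\mathbb{S}^2$ and citing \cite[Sec.\ 4.3]{Diestel2017} for the fact that any face can be chosen as the outer one. Your write-up just makes the stereographic-projection bookkeeping and the degenerate cycle case explicit, which is a reasonable elaboration of the same idea.
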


It is well-known that an $n$-dimensional hypercube $Q_n$ is planar if and
only if $n\leq 3$, see e.g.\ \cite{harary1988survey}.  Since every subgraph
of a planar graph is planar as well, we obtain
\begin{lemma}\label{lem:planar-hypercube-3}
	For every hypercube $Q_n\subseteq G$ in a planar graph $G$ it holds that
	$n\le 3$.
\end{lemma}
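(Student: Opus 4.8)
The plan is to reduce the statement to the classical fact, recalled in the text just above, that the hypercube $Q_n$ is planar if and only if $n\le 3$. First I would invoke the elementary hereditary property of planarity: restricting a planar embedding of $G$ to any subgraph yields a planar embedding of that subgraph, so every subgraph of a planar graph is planar. Applying this to $Q_n\subseteq G$ with $G$ planar shows that $Q_n$ itself is planar, and hence by the cited characterization (see e.g.\ \cite{harary1988survey}) we must have $n\le 3$.

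If one prefers a self-contained argument rather than quoting the characterization, the only nontrivial point is that $Q_4$ is non-planar; non-planarity then propagates to all $Q_n$ with $n\ge 4$ because such a $Q_n$ contains $Q_4$ as a subgraph (fix $n-4$ of the coordinates, i.e.\ take $Q_4\,\Box\,\{v\}$ inside $Q_n = Q_4\,\Box\,Q_{n-4}$), and a graph containing a non-planar subgraph is itself non-planar. To see that $Q_4$ is non-planar one can use Euler's formula: $Q_4$ is connected with $|V|=16$ and $|E|=32$, and it is bipartite, so in any planar embedding every face would be bounded by a closed walk of length at least $4$, giving $2|E|\ge 4f$ and hence $f\le 16$; but Euler's formula $|V|-|E|+f=2$ forces $f=18$, a contradiction. (Alternatively one exhibits a subdivision of $K_{3,3}$ inside $Q_4$.)

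I do not expect any genuine obstacle here: the lemma is essentially a restatement of a well-known classical fact, and the only ingredient beyond the citation is the observation that planarity is inherited by subgraphs, which is immediate from the definition of a planar embedding.
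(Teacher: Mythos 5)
Your proposal is correct and follows essentially the same route as the paper, which likewise derives the lemma from the classical fact that $Q_n$ is planar iff $n\le 3$ together with the observation that planarity is inherited by subgraphs. The additional self-contained Euler-formula argument for the non-planarity of $Q_4$ is a correct (and welcome) supplement, but does not change the substance of the approach.
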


Two planar embeddings
$\pi_1,\pi_2 \colon G \to \mathbb{S}^2$ are \emph{equivalent} if there is a
homeomorphism $h\colon \mathbb{S}^2 \to \mathbb{S}^2$ such that
$h\circ \pi_1 = \pi_2$. We say that a graph $G$ is \emph{uniquely embeddable} on
$\mathbb{S}^2$ (up to equivalence) if any two planar embeddings of $G$ on
$\mathbb{S}^2$ are equivalent.
\begin{theorem}[{\cite{Whitney:33}}]\label{thm:3c-p}
	Every 3-connected planar graph is uniquely embeddable on $\mathbb{S}^2$.
\end{theorem}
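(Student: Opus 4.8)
This is the classical theorem of Whitney, so I only sketch the standard argument. \textbf{Overview.} The plan is to exhibit the set of facial boundaries of a planar embedding of a $3$-connected graph $G$ as a \emph{purely combinatorial} invariant of $G$ — namely the family of induced non-separating cycles of $G$ (cycles $C$ having no chord and with $G-V(C)$ connected) — and then to show that this family already determines the embedding up to homeomorphism of $\mathbb{S}^2$. Throughout we use that a $3$-connected graph is $2$-connected, so that in any planar embedding of $G$ every face is bounded by a cycle, which is a standard fact about $2$-connected plane graphs.

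\textbf{Step 1: facial cycles are exactly the induced non-separating cycles.} The key lemma is: for a cycle $C\subseteq G$, $C$ bounds a face in \emph{some} planar embedding of $G$ if and only if $C$ has no chord and $G-V(C)$ is connected. For the ``only if'' direction, fix an embedding $\pi$ in which $C$ bounds a face $F$; a chord of $C$, or a separation of $G-V(C)$, would — together with a suitable pair of vertices of $C$ — give a $2$-cut of $G$, contradicting $3$-connectivity. The ``if'' direction is the heart of the matter: fix any embedding, regard $C$ as a Jordan curve partitioning $\mathbb{S}^2$ into two open disks; since $G-V(C)$ is connected it lies entirely in the closure of one disk, and since $C$ has no chord the other open disk is disjoint from $G$ and hence is a face bounded by $C$. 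Because the right-hand side of the equivalence mentions no embedding, the set $\mathcal{F}(G)$ of facial cycles is literally the same in every planar embedding of $G$; from the same analysis one also obtains that every edge of $G$ lies on exactly two members of $\mathcal{F}(G)$.

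\textbf{Step 2: the facial cycles determine the embedding.} Let $\pi_1,\pi_2\colon G\to\mathbb{S}^2$ be two planar embeddings; by Step 1 they have the same face set $\mathcal{F}(G)$. Form the abstract $2$-complex $K$ obtained from $G$ by attaching a $2$-cell along each cycle in $\mathcal{F}(G)$. Since each edge lies on exactly two of these cycles (and, around each vertex, the incident faces form a single fan, as witnessed by $\pi_1$), $K$ is a closed surface; Euler's formula $|V|-|E|+|\mathcal{F}(G)|=2$, which holds because $\mathcal{F}(G)$ is the face set of the \emph{planar} embedding $\pi_1$, shows $K\cong\mathbb{S}^2$. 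Each $\pi_i$ realizes $G$ in $\mathbb{S}^2$ with complementary regions exactly the interiors of the cells of $\mathcal{F}(G)$, hence extends to a homeomorphism $h_i\colon K\to\mathbb{S}^2$ with $h_i|_G=\pi_i$; then $h\coloneqq h_2\circ h_1^{-1}$ is a self-homeomorphism of $\mathbb{S}^2$ with $h\circ\pi_1=\pi_2$, so $\pi_1$ and $\pi_2$ are equivalent. (Equivalently one may argue via rotation systems: the cyclic order of edges around each vertex is forced, up to a single global reflection, by which cycles of $\mathcal{F}(G)$ pass through that vertex and how.)

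\textbf{Main obstacle.} The technical crux is the ``if'' direction in Step 1: ruling out that a vertex or edge of $G$ escapes into the ``wrong'' region of the Jordan curve $C$ uses $3$-connectivity in an essential way (together with the Jordan curve theorem), and the ``only if'' direction similarly needs $3$-connectivity to forbid chords and separations. The gluing argument in Step 2 is conceptually routine but relies on the ``exactly two facial cycles per edge'' fact (and the fan condition at vertices) to guarantee that $K$ is a genuine surface rather than merely a $2$-complex.
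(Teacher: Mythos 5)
The paper does not prove this statement: it is quoted as a classical result of Whitney (the citation \cite{Whitney:33}), so there is no in-paper argument to compare yours against. Your sketch is the standard modern proof (essentially Tutte's characterization of face boundaries as the induced non-separating cycles, specialized to the planar case, as in Diestel's treatment), and it is correct as an outline. Two remarks on where the sketch is thinnest: in the ``only if'' direction of Step 1, a chord or a separation of $G-V(C)$ does not yield a $2$-cut from $3$-connectivity alone --- you must use the embedding (via the Jordan curve theorem and a bridge/region analysis in the disk not containing the face $F$) to show that the two arcs of $C$ between the chosen pair of vertices, respectively the two components of $G-V(C)$, cannot be reconnected; and in Step 2 the extension of $\pi_i$ over the attached $2$-cells tacitly invokes the Schoenflies theorem to match each closed cell of $K$ with the closure of the corresponding face of $\pi_i(G)$. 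Both points are standard and consistent with your stated ``main obstacle,'' so I have no objection to the proposal as a proof sketch of the cited theorem.
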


In the following, every square of $G$ that bounds a face for \emph{some}
planar embedding of $G$ is called \emph{square-boundary}.  We will denote
planar embeddings of cubes by $\rho$, see the graph $G_d$ in
\cref{fig:exam-cubesquare} for such a $\rho$-embedded cube.
\Cref{thm:3c-p} and the fact that cubes are 3-connected implies
\begin{obs}\label{obs:Cube-Embedding}
	Let $G$ be $\rho$-embedded cube. Then, all faces must be bounded by
	squares.  In particular, $C\subseteq G$ is a square if and only if $C$ is
	an inner or outer boundary in $G$ w.r.t.\ $\rho$, and thus, if and only if
	$C$ is a square-boundary of $G$.
\end{obs}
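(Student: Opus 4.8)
The plan is to combine the unique embeddability of $3$-connected planar graphs (\Cref{thm:3c-p}) with a count of the $4$-cycles contained in a cube. First I would recall that a cube $Q_3$ is $3$-connected, so by \Cref{thm:3c-p} any two planar embeddings of $Q_3$ on $\mathbb{S}^2$ are equivalent; since an equivalence is a homeomorphism of $\mathbb{S}^2$, it carries the faces of one embedding onto the faces of the other and their boundaries onto boundaries. Hence it suffices to exhibit \emph{one} planar embedding of $Q_3$ in which every face is bounded by a square, and transport this property along the equivalence homeomorphism. The familiar planar drawing of the cube has $|V|=8$, $|E|=12$, hence $|F|=6$ faces by Euler's formula; moreover $Q_3$ is $2$-connected with girth $4$, so every face boundary is a cycle of length $\ge 4$, and since the six boundary lengths sum to $2|E|=24$ they must all equal $4$. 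Thus in this (and hence in every $\rho$-)embedding all six faces are bounded by squares, which is the first assertion.

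For the ``in particular'' statement I would next argue that $Q_3$ contains exactly six squares. Identifying $V(Q_3)$ with $\{0,1\}^3$, any $4$-cycle traverses four edges and thus flips a multiset of four coordinates summing to $0$ in $\mathbb{F}_2^3$; with only three coordinates available this forces two coordinates flipped twice each (flipping a single coordinate four times repeats vertices and is not a cycle), so the cycle lies among the vertices having one fixed coordinate. Choosing the two ``moving'' coordinates ($3$ ways) and the value of the fixed one ($2$ ways) yields exactly $3\cdot 2 = 6$ such $4$-cycles. Now fix a $\rho$-embedding: it has six faces, each bounded by a square, and since $Q_3$ is not a cycle, distinct faces have distinct boundaries (the fact cited in the preliminaries), so these six face boundaries are six \emph{distinct} squares. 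As $Q_3$ has only six squares in total, a subgraph $C$ of $G$ is a square if and only if it is the boundary of a face of the $\rho$-embedding, i.e., an inner or outer boundary of $G$ w.r.t.\ $\rho$.

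The remaining equivalence is then immediate: a square-boundary is by definition a square that bounds a face for \emph{some} planar embedding, hence in particular a square; conversely, if $C$ is a square then by the previous paragraph $C$ is an inner or outer boundary w.r.t.\ the planar embedding $\rho$, so $C$ is a square-boundary. I expect the only point requiring a little care to be the transfer of the ``all faces are squares'' property across embeddings — namely, spelling out that an equivalence homeomorphism of $\mathbb{S}^2$ maps faces to faces and boundaries to boundaries — together with carrying out the $4$-cycle count cleanly; the rest is bookkeeping.
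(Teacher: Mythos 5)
Your proposal is correct and follows the same route the paper intends: the paper derives this observation directly from Whitney's theorem (\Cref{thm:3c-p}) and the $3$-connectivity of the cube, exactly as you do, and merely leaves the details (Euler's formula, the girth/edge-count argument forcing all six faces to be squares, and the count of six $4$-cycles in $Q_3$) unstated. Your fleshed-out version of those details is accurate, so there is nothing to add.
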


\begin{figure}[t]
	\begin{center}
		\includegraphics[width=0.8\textwidth]{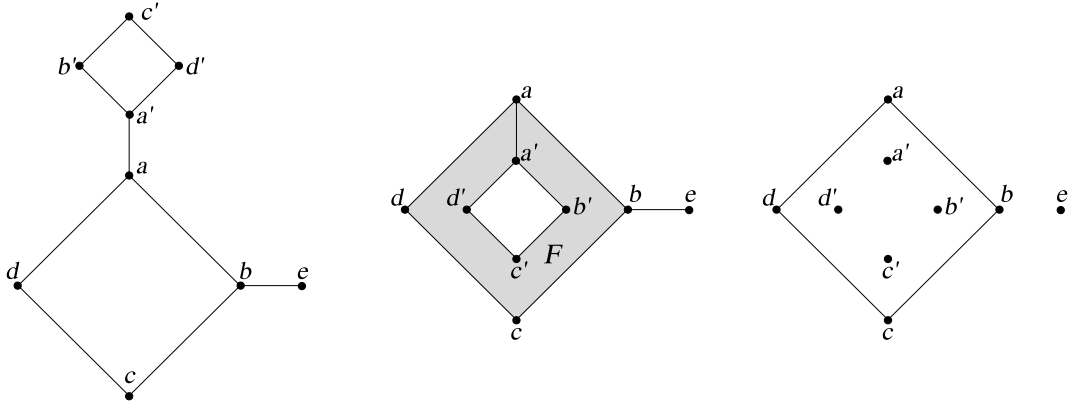}
	\end{center}
	\caption{Two distinct planar embeddings of a graph $G$ (left and
		middle).  \emph{Left:} $G$ is the outer boundary and every inner
		face is bounded by a square.  Since $G$ has no inner vertices, the
		conditions of Def.\ \ref{def:s-g} are satisfied and $G$ is a
		square-graph and, in particular, a planar median graph.  \emph{Middle:}
		$G' = G-e$ is an inner boundary which bounds the gray shaded face $F$.
		The graph induced by $\{a,b,c,d,e\}$ is the outer boundary and the
		4-cycle induced by $\{a',b',c',d'\}$ an inner boundary.  \emph{Right:}
		Consider the embedding $\pi$ of $G$ as in the middle figure. Then, shown
		is the $(G,\pi)$-induced embedding of the cycle $C$ induced by $\{a,b,c,d\}$
		and additionally, all other vertices of $G$.  W.r.t.\ the
		$(G,\pi)$-induced embedding of $C$, the vertices $a,b,c,d$ are almost-inside
		and almost-outside $C$, the vertex $e$ is outside (and, in particular,
		almost-outside) $C$ and the vertices $a',b',c',d'$ are inside (and in
		particular, almost-inside) of $C$.  Placing vertex $e$ inside $C$
		yields an embedding $\pi\lc{C}$ such that the outer boundary of $G$ is
		$C$ while placing the subgraph with vertices $a',b',c'$ and $d'$
		outside $C$ yields an embedding $\pi\lc{C}$ such that $C$ is an inner
		boundary in $G$.}
	\label{fig:planarDef}
\end{figure}

The following definitions are central for the presentation below.  Let $G$
be a planar graph and $G'\subseteq G$ be some subgraph of $G$.  Fixing a
planar embedding $\pi$ of $G$ and removing all edges and vertices from $G$
that are not contained in $G'$ yields a planar embedding of $G'$ that is
``anchored'' on the planar embedding $\pi$ of $G$.  We call such an
embedding of $G'$ a \emph{$(G,\pi)$-induced embedding}. Let $C$ be a cycle of $G$
and fix a planar embedding $\pi$ of $G$. A vertex $v\in V(G)$ is
\emph{outside} (resp., \emph{inside}) of $C$ if $v$ is contained in the
outer (resp., inner) face bounded by $C$ w.r.t.\ the $(G,\pi)$-induced
embedding.  By definition, $v \in V(C)$ is neither inside nor outside of
this $C$.  A vertex $v\in V(G)$ is \emph{almost-outside} (resp.,
\emph{almost-inside}) if either $v\in V(C)$ or $v$ is outside (resp.,
inside) of $C$.

Below, we will make frequent use of the subgraphs $\Gin{C,\pi}$ and
$\Gout{C,\pi}$ of $G$ defined as follows. Let $G$ be a $\pi$-embedded
planar graph and $C\subseteq G$ be a square in $G$.  Then, $\Gin{C,\pi}$
(resp., $\Gout{C,\pi}$) is the subgraph of $G$ that is obtained by deleting
every vertex and every edge of $G$ that is located in the outer (resp.\
inner) face bounded by $C$ w.r.t.\ the $(G,\pi)$-induced embedding of
$C$. In particular, for $K_3$-free graphs $G$, the subgraph $\Gin{C,\pi}$
(resp.\ $\Gout{C,\pi}$) is induced by all vertices that are almost-inside
(resp.\ almost-outside) of $C$.  Note that the vertices of $C$ are
contained in both $\Gin{C,\pi}$ and $\Gout{C,\pi}$.  Given the
$(G,\pi)$-induced planar embedding of $\Gin{C,\pi}$ and $\Gout{C,\pi}$, the
square $C$ is the outer boundary of $\Gin{C,\pi}$ and an inner boundary of
$\Gout{C,\pi}$.

The next result provides some insights on the location of vertices w.r.t.\
subgraphs $C_4$ of planar and $\{K\lc{3},K\lc{2,3}\}$-free graphs, which we
need for later reference.
\begin{lemma}\label{lem:in-out-C4}
	Let $G$ be a planar $\pi$-embedded and $\{K\lc{3},K\lc{2,3}\}$-free graph, and
	$C,C'\subseteq G$ be squares.  If $a,b \in V(C)$, then for every
	$c \in V(P^\star_G(a,b))$, we have $c \in V(C)$.  Moreover, $\Gin{C,\pi}$
	and $\Gout{C,\pi}$ as well as $\Gin{C,\pi}\cap \Gout{C',\pi}$ and
	$\Gout{C,\pi}\cap \Gout{C',\pi}$ and $\Gin{C,\pi}\cap \Gin{C',\pi}$ are
	convex subgraphs of $G$.
\end{lemma}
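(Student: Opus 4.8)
The plan is to handle the two assertions separately, using $K_3$-freeness and $K_{2,3}$-freeness throughout so that, by \Cref{lem:C4convex}, every square of $G$ is already a convex subgraph.

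First I would prove the statement about shortest paths. Let $a,b\in V(C)$ with $C$ a square. Since $G$ is $K_3$-free and $a,b$ lie on a $4$-cycle, $d_G(a,b)\in\{1,2\}$. If $d_G(a,b)=1$ then $P^\star_G(a,b)$ is just the edge $ab$ and there is nothing to prove. If $d_G(a,b)=2$ then $a,b$ are opposite vertices of $C$; since $C$ is convex by \Cref{lem:C4convex}, every shortest path between $a$ and $b$ lies in $C$, so every internal vertex $c$ of $P^\star_G(a,b)$ is one of the two remaining vertices of $C$. This already gives the first claim, and it also shows that $\Gin{C,\pi}$ and $\Gout{C,\pi}$ are \emph{induced} subgraphs of $G$ (the description of $\Gin{C,\pi},\Gout{C,\pi}$ in the text via almost-inside/almost-outside vertices applies).

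Next I would show $\Gin{C,\pi}$ is convex; convexity of $\Gout{C,\pi}$ is symmetric via \Cref{obs:cycle-bound} (re-embed so that the roles of inside and outside of $C$ are swapped). Take $u,v\in V(\Gin{C,\pi})$, i.e.\ both are almost-inside $C$, and let $P=P^\star_G(u,v)$ be any shortest $u$--$v$ path in $G$; I must show $P\subseteq \Gin{C,\pi}$. Suppose not: then $P$ leaves the closed region bounded by $C$, so by the Jordan curve theorem $P$ meets $V(C)$ in at least two vertices. Let $x$ be the first and $y$ the last vertex of $C$ along $P$ (they may differ); the subpath of $P$ between $x$ and $y$ is itself a shortest $x$--$y$ path, hence by the previous paragraph (convexity of the square $C$) it lies entirely in $C$, contradicting that $P$ wanders strictly outside $C$ between $x$ and $y$. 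Hence $P$ stays almost-inside $C$, so $P\subseteq \Gin{C,\pi}$, proving convexity.

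Finally, for the three intersections: a routine fact is that the intersection of two convex subgraphs of any graph is convex (if $u,v$ lie in both $H_1$ and $H_2$, every shortest $u$--$v$ path lies in $H_1$ and in $H_2$, hence in $H_1\cap H_2$). Applying this to the already-established convexity of $\Gin{C,\pi},\Gout{C,\pi},\Gin{C',\pi},\Gout{C',\pi}$ gives that $\Gin{C,\pi}\cap\Gout{C',\pi}$, $\Gout{C,\pi}\cap\Gout{C',\pi}$ and $\Gin{C,\pi}\cap\Gin{C',\pi}$ are all convex. I expect the main obstacle to be the planarity/Jordan-curve argument in the convexity proof: one has to argue carefully that a path starting and ending in the closed disk bounded by $C$ but passing through its exterior must cross $C$ at two vertices (using that $G$ is embedded and that $C$, being chordless since $G$ is $K_3$-free, bounds a face region cleanly), and that the "first and last" vertices on $C$ are well-defined even when $P$ re-enters the disk several times — handled by taking a maximal subpath lying outside and applying convexity of $C$ to its two endpoints on $C$.
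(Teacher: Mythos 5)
Your proposal is correct and follows essentially the same route as the paper: the $K_{2,3}$-based convexity of squares for the first claim, a planarity/Jordan-curve argument showing any shortest path between almost-inside vertices that strays outside must re-enter through two vertices of $C$ and hence (by the first claim) stay on $C$, and closure of convexity under intersection for the remaining assertions. The only cosmetic difference is that you invoke \Cref{lem:C4convex} directly where the paper re-derives the ``third shortest path gives a $K_{2,3}$'' argument inline, and the paper handles $\Gout{C,\pi}$ by an analogous argument rather than by re-embedding; both are immaterial.
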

\begin{proof}
	Let $G$ be a planar $K\lc{2,3}$-free graph, $C\subseteq G$ be a
	square and $a,b\in V(C)$. Hence, 
	$d\lc{G}(a,b)\in \{1,2\}$. If $d\lc{G}(a,b)=1$, then there is nothing to show.
	Suppose that
	$d\lc{G}(a,b)=2$. In this case, there are two vertices
	$c\lc{1},c\lc{2}\in V(C)$ on two shortest path between $a$ and
	$b$. If there would be a third shortest path of length two, then $G$
	would contain a $K\lc{2,3}$, which is not possible. Hence, every vertex
	on the shortest paths between $a$ and $b$ must be part of $C$.
	
	Now, let $\pi$ be an arbitrary planar embedding of $G$ and consider the
	$(G,\pi)$-induced embedding of a square $C$ and let $a,b \in V(G)$ and
	$P^\star_G(a,b)$ be an arbitrary shortest path between $a$ and $b$.
	Assume that $a$ and $b$ are almost-inside of $C$.  Moreover, we assume
	for contradiction that there is some $c \in V(P^\star_G(a,b))$ that is
	outside of this $C$.  Since $G$ is planar, there must be two vertices
	$a',b' \in V(C)$, which are part of this $V(P^\star_G(a,b))$ such that
	$P^\star_G(a,b)=P^\star_G(a,a')\cup P^\star_G(a',b') \cup
	P^\star_G(b',b)$ and $c \in V(P^\star_G(a',b'))$, and with
	$P^\star_G(a,a')$, $P^\star_G(a',b')$ and $P^\star_G(b',b)$ being some
	shortest paths between $a,a'$ and $a',b'$ and $b,b'$, respectively.
	Since $a',b' \in V(C)$ and $P^\star_G(a',b')$ is a shortest path with
	$c \in V(P^\star_G(a',b'))$, we conclude by analogous arguments as above
	that $c \in V(C)$.  Hence, $c$ is not outside of $C$; a contradiction.
	Thus, every $c \in V(P^\star_G(a,b))$ is almost-inside of $C$.  Since $G$
	is $K\lc{3}$-free, every edge $\{v,w\}\in E(G)$ with
	$v,w \in V(\Gin{C,\pi})$ is also contained in $E(\Gin{C,\pi})$.  Taken
	the last two arguments together, $\Gin{C,\pi}$ is a convex subgraph of
	$G$.
	
	Analogous arguments show that $\Gout{C,\pi}$ is a convex subgraph of $G$.
	Since the intersection of convex subgraphs yields a convex subgraph (cf.\
	e.g.\ \cite[L.\ 5.2]{HLS14}), we conclude that
	$\Gin{C,\pi}\cap \Gout{C',\pi}$, $\Gout{C,\pi}\cap \Gout{C',\pi}$ and
	$\Gin{C,\pi}\cap \Gin{C',\pi}$ are convex subgraphs of $G$.
\end{proof}

The following result is a direct consequence of
\Cref{lem:in-out-C4,lem:charact-convexHull}.
\begin{lemma}\label{lem:in-out-C4-hull}
	Let $G$ be a planar $\pi$-embedded $\{K\lc{3},K\lc{2,3}\}$-free graph 
	which contains a square $C\subseteq G$.  Moreover, let $G'\subseteq G$ be a subgraph
	and $\mc{H}(G')$ be its convex hull (w.r.t.\ $G$).  If every
	$v \in V(G')$ is almost-inside (resp., almost-outside) of C, then every
	$v' \in V(\mc{H}(G'))$ is almost-inside (resp., almost-outside) of C,
	where inside and outside refer to the $(G,\pi)$-induced embedding of $C$.
\end{lemma}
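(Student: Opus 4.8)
The plan is to reduce the statement to the convexity of $\Gin{C,\pi}$ (resp.\ $\Gout{C,\pi}$) already established in \cref{lem:in-out-C4}. First I would recall that, since $G$ is $K\lc{3}$-free, $\Gin{C,\pi}$ is precisely the subgraph of $G$ induced by the set of vertices that are almost-inside of $C$ w.r.t.\ the $(G,\pi)$-induced embedding of $C$ — this is noted in the paragraph preceding \cref{lem:in-out-C4}. The point is that no edge of $G$ joining two almost-inside vertices can lie in the outer face bounded by $C$: otherwise it would either be a chord of the square $C$ (creating a $K\lc{3}$ together with two consecutive vertices of $C$) or a parallel edge, both of which are excluded. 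Hence the hypothesis that every $v \in V(G')$ is almost-inside of $C$ gives $V(G') \subseteq V(\Gin{C,\pi})$, and because $\Gin{C,\pi}$ is an \emph{induced} subgraph, in fact $G' \subseteq \Gin{C,\pi}$.

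Next I would invoke \cref{lem:in-out-C4}, which states that $\Gin{C,\pi}$ is a convex subgraph of $G$. Since the convex hull $\mc{H}(G')$ is, by definition, the intersection of all convex subgraphs of $G$ containing $G'$, and $\Gin{C,\pi}$ is one such subgraph, we obtain $\mc{H}(G') \subseteq \Gin{C,\pi}$. Therefore every $v' \in V(\mc{H}(G'))$ lies in $V(\Gin{C,\pi})$, i.e.\ is almost-inside of $C$, which is the claim. The almost-outside case is completely symmetric, replacing $\Gin{C,\pi}$ by $\Gout{C,\pi}$ throughout and using the corresponding convexity statement from \cref{lem:in-out-C4}.

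An equivalent, slightly more hands-on route — which is presumably what is meant by the remark that the result follows from \cref{lem:in-out-C4,lem:charact-convexHull} — is to argue along an SPE-sequence $(G'\lc{1},\dots,G'_m)$ of $G'$: by \cref{lem:charact-convexHull} its last term $G'_m$ equals $\mc{H}(G')$, so it suffices to show by induction on $i$ that every vertex of $G'_i$ is almost-inside of $C$. The base case $G'\lc{1}=G'$ is exactly the hypothesis, and in the inductive step $G'_{i+1}=G'_i\cup P^\star_G(w,w')$ with $w,w'\in V(G'_i)$ almost-inside of $C$; the core of the argument in the proof of \cref{lem:in-out-C4} (the part showing that a shortest path between two almost-inside vertices stays almost-inside) then yields that every vertex of $P^\star_G(w,w')$ is almost-inside of $C$, completing the induction. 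I do not expect any genuine obstacle here: the only point that needs a moment's care is the reduction that $G'$ is genuinely a \emph{sub}graph of $\Gin{C,\pi}$ (not merely that its vertex set consists of almost-inside vertices), which as noted uses $K\lc{3}$-freeness; everything else is bookkeeping on top of \cref{lem:in-out-C4}.
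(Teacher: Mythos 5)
Your proof is correct and matches the paper's intent: the paper gives no explicit proof, stating the lemma as a direct consequence of \cref{lem:in-out-C4,lem:charact-convexHull}, and your argument (that $G'\subseteq \Gin{C,\pi}$ by $K\lc{3}$-freeness and inducedness, that $\Gin{C,\pi}$ is convex, and hence that $\mc{H}(G')\subseteq \Gin{C,\pi}$ since the hull is the intersection of all convex supergraphs) is exactly the intended filling-in of that claim. Your alternative SPE-based route is also valid and is the reading most consistent with the paper's citation of \cref{lem:charact-convexHull}.
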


Note that shortest paths on isometric cycles $C'\subseteq G$ connecting its
vertices must be shortest paths in the underlying graph $G$.  By
\Cref{lem:in-out-C4}, the graphs $\Gin{C,\pi}$ and $\Gout{C,\pi}$ are
convex subgraphs of planar $\pi$-embedded and $\{K\lc{3},K\lc{2,3}\}$-free
graphs.  Thus, every isometric cycles of such a graph must be entirely
contained in $\Gin{C,\pi}$ or $\Gout{C,\pi}$.  We summarize the latter
discussion in
\begin{lemma}\label{lem:iso->in/out-C4}
	Let $G$ be a planar $\pi$-embedded and $\{K_3, K_{2,3}\}$-free graph, 
	$C\subseteq G$ be a square and $C'$ be an isometric cycle of $G$.
	Then, all vertices in $V(C')\setminus V(C)$ are either inside or 
	outside
	of $C$ w.r.t.\ the
	$(G,\pi)$-induced embedding of $C$. 
\end{lemma}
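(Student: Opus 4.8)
The plan is to argue by contradiction. Suppose that $C'$ has a vertex $u$ strictly inside $C$ and a vertex $w$ strictly outside $C$ with respect to the $(G,\pi)$-induced embedding of $C$, and set $B \coloneqq V(C')\cap V(C)$, so that $B\subseteq V(C)$ has at most four elements and the vertices of $C'$ that are neither inside nor outside of $C$ are precisely those in $B$. First I would use planarity to observe that no edge of $C'$ joins a vertex inside $C$ to a vertex outside $C$: such an edge would be an arc crossing the Jordan curve $C$, but in the $\pi$-embedded graph $G$ distinct edges meet only at common endpoints, and the endpoints of that edge are not in $V(C)$. Consequently every transition between the inside and the outside of $C$ along the cycle $C'$ passes through a vertex of $B$; hence $C'-B$ is a disjoint union of paths (the maximal $B$-free arcs of $C'$), each lying entirely inside $C$ or entirely outside $C$. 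Since both the inside and the outside of $C$ contain a vertex of $C'$, this forces $|B|\ge 2$ and, moreover, yields two \emph{distinct} maximal $B$-free arcs $A,A'$ of $C'$ with nonempty interior, one inside and one outside of $C$; in particular each of $A$ and $A'$ has length at least $2$.

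Next I would set up the counting. Let $Q_1,\dots,Q_k$ with $k=|B|\in\{2,3,4\}$ be the maximal $B$-free arcs of $C'$ in their cyclic order along $C'$, let $a_i\coloneqq|Q_i|\ge 1$, and let $L\coloneqq|C'|=\sum_{i=1}^k a_i$. The endpoints of each $Q_i$ are two distinct vertices of $B\subseteq V(C)$, and the two arcs of $C'$ between these endpoints have lengths $a_i$ and $L-a_i$; thus $Q_i$ is a shortest path in $C'$ between its endpoints exactly when $a_i\le L-a_i$. I then claim that no arc $Q_i$ can have both $a_i\ge 2$ and $a_i\le L-a_i$: in that case $Q_i$ is a geodesic of $C'$ between two vertices of $V(C)$, so by isometry of $C'$ it is a geodesic of $G$ between those vertices, and the first assertion of \Cref{lem:in-out-C4} then forces every vertex of $Q_i$ to lie in $V(C)$; but $a_i\ge 2$ means $Q_i$ has an interior vertex, which lies in $V(C')\setminus B$, hence not in $V(C)$ — a contradiction. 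Therefore every arc of length at least $2$ satisfies $a_i>L/2$. Since at most one index can satisfy $a_i>L/2$ (two such would give $a_i+a_j>L=\sum a_i$), at most one of the $a_i$ is $\ge 2$, contradicting the existence of the two distinct arcs $A$ and $A'$. Hence the inside or the outside of $C$ contains no vertex of $C'$, which is exactly the claim.

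I expect the only delicate point to be the planarity step confining every inside/outside transition of $C'$ to a vertex of $C$ (the Jordan-curve argument that $C'-B$ decomposes into one-sided arcs); everything after that is bookkeeping. It is worth noting that the convexity of $\Gin{C,\pi}$ and $\Gout{C,\pi}$ from \Cref{lem:in-out-C4} by itself only gives the weaker fact that a geodesic subpath of $C'$ stays on one side once it meets $C$, and does not rule out a cycle straddling $C$; it is the isometry of $C'$ together with $|V(C)|=4$ (so $|B|\le 4$, hence at most one ``long'' arc) that produces the contradiction. Finally, since $G$ is $K_3$-free, $\Gin{C,\pi}$ and $\Gout{C,\pi}$ are the subgraphs induced by the almost-inside and almost-outside vertices, so the vertex statement just proved is equivalent to $C'\subseteq\Gin{C,\pi}$ or $C'\subseteq\Gout{C,\pi}$, matching the discussion preceding the lemma.
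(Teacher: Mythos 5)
Your proof is correct, and it takes a genuinely different (and more complete) route than the paper's. The paper does not give a formal proof of this lemma at all: it derives it in two sentences from the observation that geodesic arcs of an isometric cycle are geodesics of $G$ together with the convexity of $\Gin{C,\pi}$ and $\Gout{C,\pi}$ established in \Cref{lem:in-out-C4}, leaving the reader to fill in why two convex subgraphs covering $G$ must each swallow or exclude an isometric cycle. You instead build the argument on the \emph{first} assertion of \Cref{lem:in-out-C4} (every $G$-geodesic between two vertices of the square $C$ stays on $C$), combined with a Jordan-curve observation (no edge of $C'$ crosses $C$, so inside/outside transitions occur only at vertices of $B=V(C')\cap V(C)$) and an arc-counting argument: any maximal $B$-free arc with an interior vertex cannot be a $C'$-geodesic between its endpoints, hence is longer than half of $C'$, and at most one arc can be that long, contradicting the existence of one such arc on each side. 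Your parenthetical remark that convexity of $\Gin{C,\pi}$ and $\Gout{C,\pi}$ alone does not rule out a straddling cycle is apt -- convexity constrains geodesics between two vertices \emph{on the same side}, which is exactly the configuration one cannot assume here -- so your version actually supplies a justification the paper's one-line derivation elides. The price is a longer, more combinatorial proof; the paper's sketch is shorter but relies on an unspoken step.
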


\begin{lemma}\label{lem:Q3-planar-maxDist}
	Let $G$ be a planar graph that contains a cube $Q_3$ and let
	$u,v\in V(Q_3)$ such that $d_{Q\lc{3}}(v,w)=3$.  Then, $Q\lc{3}-\{v,w\}$
	results in a $6$-cycle $C$ and, for every planar
	embedding $\pi$ of $G$, $v$ is located in the inner face and $w$ in the
	outer face of $C$ or \emph{vice versa} w.r.t.\ $(G,\pi)$-induced
	embedding of $C$.
\end{lemma}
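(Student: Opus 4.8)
The plan is to combine an elementary fact about the cube with the uniqueness of the cube's planar embedding recorded in \Cref{obs:Cube-Embedding} (which rests on \Cref{thm:3c-p} and the $3$-connectedness of cubes).

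First I would observe that $d_{Q_3}(v,w)=3$ forces $v$ and $w$ to be antipodal in $Q_3$, since $Q_3$ has diameter $3$ and only antipodal pairs attain it. Identifying $V(Q_3)$ with $\{0,1\}^3$ so that $v=000$ and $w=111$, a direct check gives $N_{Q_3}(v)=\{100,010,001\}$ and $N_{Q_3}(w)=\{011,101,110\}$, which together partition the remaining six vertices, and shows that $C:=Q_3-\{v,w\}$ is the $6$-cycle $100,101,001,011,010,110,100$. Moreover $N_{Q_3}(v)$ and $N_{Q_3}(w)$ are precisely the two sets of alternating (hence pairwise non-adjacent) vertices of $C$. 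This settles the first assertion of the lemma.

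Next, fix an arbitrary planar embedding $\pi$ of $G$ and restrict it to $Q_3\subseteq G$, i.e.\ pass to the $(G,\pi)$-induced embedding $\rho$ of $Q_3$; this is a genuine planar embedding of the cube, and restricting it further to $C$ yields the $(G,\pi)$-induced embedding of $C$, in which $C$ is drawn by the very same closed curve $\gamma$. Since $v,w\notin V(C)$, each of $v$ and $w$ lies in one of the two open regions into which $\gamma$ separates the sphere (equivalently, in the inner or the outer face of $C$). Assume, for contradiction, that $v$ and $w$ lie in the same region $R$, and let $R'$ be the other region. Every edge of $Q_3$ is either an edge of $C$ or is incident with $v$ or with $w$; hence neither a vertex nor an edge of $Q_3$ lies in $R'$. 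Consequently $R'$ is a single face of the $\rho$-embedded cube $Q_3$, and its boundary is $C$, a $6$-cycle. This contradicts \Cref{obs:Cube-Embedding}, according to which every face of a $\rho$-embedded cube is bounded by a square. Therefore $v$ and $w$ lie in different regions of $\gamma$; that is, with respect to the $(G,\pi)$-induced embedding of $C$, one of $v,w$ lies in the inner face of $C$ and the other in the outer face. As $\pi$ was arbitrary, this proves the lemma.

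The only point requiring a little care is the bookkeeping in the last step: checking that, once $v$ and $w$ are assumed to be on the same side of $\gamma$, no vertex and no edge of $Q_3$ can occur on the other side, so that this other side is indeed a single face whose boundary is exactly the hexagon $C$ — at which point \Cref{obs:Cube-Embedding} applies and delivers the contradiction. (Alternatively, one can avoid \Cref{obs:Cube-Embedding} and argue directly: the three edges from $v$ to $N_{Q_3}(v)$ subdivide the region containing $v$ into three faces, each bounded by two of these edges and an arc of $C$ joining consecutive vertices of $N_{Q_3}(v)$; such an arc contains only one vertex of $N_{Q_3}(w)$ in its interior, so $w$ — lying in one of these three faces — cannot be joined to all three vertices of $N_{Q_3}(w)$ without a crossing.) Everything else is a routine verification.
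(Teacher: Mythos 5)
Your proposal is correct and follows essentially the same route as the paper: the paper also argues that if $v$ and $w$ were on the same side of $C$, then $C$ would bound a face of the embedded cube, contradicting \Cref{obs:Cube-Embedding} (every face of an embedded $Q_3$ is bounded by a square). You simply supply more detail — the explicit verification that $Q_3-\{v,w\}$ is a hexagon and the bookkeeping showing the other region is genuinely a face — which the paper leaves as "easily observed."
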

\begin{proof}
	Let $G$ be a planar graph that contains a cube $Q_3$ and let
	$v,w\in V(Q_3)$ such that $d_{Q\lc{3}}(v,w)=3$.  One easily observes that
	$Q\lc{3}-\{v,w\}$ results in a $6$-cycle $C$.  If both $v$ and $w$ are
	inside (resp., outside) w.r.t.\ $(G,\pi)$-induced embedding of $C$, then
	$C$ would be an outer (resp., inner) boundary of the cube $Q_3$.
	However, every boundary of a cube has to be a square (cf.\
	\cref{obs:Cube-Embedding}), and therefore, $v$ must be located in the
	inner face and $w$ in the outer face of $C$ or \emph{vice versa}.
\end{proof}

\begin{definition}[$k$-FS]
	A graph $G$ satisfies the \emph{$k$-face-square-property (w.r.t.\ $\pi$)}
	(\emph{$k$-FS}, for short) if there is a planar embedding $\pi$ of $G$
	such that at least $k$ faces are bounded by a squares.
\end{definition}
For instance, every square satisfies $2$-FS and every cube
satisfies $5$-FS.

\paragraph{\textbf{Median Graphs and Square-Graphs}}

A vertex $x \in V(G)$ is a \emph{median} of three vertices $u,v,w\in V(G)$
if $d\lc{G}(u,x)+d\lc{G}(x,v)=d\lc{G}(u,v)$,
$d\lc{G}(v,x)+d\lc{G}(x,w)=d\lc{G}(v,w)$ and
$d\lc{G}(u,x)+d\lc{G}(x,w)=d\lc{G}(u,w)$.  A connected graph $G$ is a
\emph{median graph} if every triple of its vertices has a unique median.
In other words, $G$ is a median graph if, for all $u,v,w\in V(G)$, there is
a unique vertex that belongs to shortest paths between each pair of $u, v$
and $w$.  We denote the unique median of three vertices $u$, $v$ and $w$ in
a median graph $G$ by $\med\lc{G}(u,v,w)$.

For later reference, we summarize here some well-known properties of
median graphs, see \cite{Bandelt1982,klavzar1999median} and \cite[p.\ 198]{Mulder1978}.

\begin{proposition} \label{prop:convex-hull-charact} A
	connected graph $G$ is a median graph if and only if the convex hull of
	any isometric cycle of $G$ is a hypercube.
\end{proposition}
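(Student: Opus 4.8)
The plan is to prove the two implications separately. For the forward implication the only external ingredient is the classical fact that every median graph is a \emph{partial cube}, i.e.\ admits an isometric embedding into a hypercube $Q_n$ whose coordinate directions are the Djokovi\'c--Winkler classes $\Theta$ (see e.g.\ \cite{klavzar1999median,Mulder1978}). Granting this, let $G$ be a median graph, fix an isometric embedding $G\hookrightarrow Q_n=\{0,1\}^n$, and let $C$ be an isometric cycle of $G$, necessarily of even length $2k$. A standard short argument shows that $C$ meets exactly $k$ of the coordinate classes, each in an antipodal pair of edges, the remaining $n-k$ coordinates being constant on $V(C)$; let $Q\cong Q_k$ be the subcube of $Q_n$ spanned by these $k$ coordinates (the others frozen to their value on $C$), so that $V(C)\subseteq V(Q)$. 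The heart of the matter is the lemma that the closure of $V(C)$ under the ternary median of $Q_n$ (coordinatewise majority) is all of $V(Q)$: writing the $k$ free coordinates as $j_0,\dots,j_{k-1}$ in the order in which they are flipped along one of the two arcs of $C$, the two arcs realise precisely the vertices indexed by the prefixes $\{j_0,\dots,j_{i-1}\}$ and by the suffixes $\{j_{k-1},\dots,j_{k-i}\}$; the median of a prefix-vertex, a suffix-vertex, and the antipodal vertex $v_0$ of $C$ (indexed by $\emptyset$) produces every vertex indexed by a single contiguous run of coordinates, and the median of two already-constructed vertices together with the other antipodal vertex $v_k$ of $C$ (indexed by $\{j_0,\dots,j_{k-1}\}$) produces their coordinatewise union, hence every vertex indexed by an arbitrary union of runs — that is, every vertex of $Q$. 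Since $G$ is a median graph, this median operation restricted to $V(G)$ is total with values in $V(G)$, so $V(Q)\subseteq V(G)$; the embedding being isometric, $G[V(Q)]$ carries exactly the edges of $Q$, whence $Q\subseteq G$; and $Q$, being a coordinate subcube, is convex in $Q_n$ and therefore (geodesics of $G$ are geodesics of $Q_n$) convex in $G$. A convex subgraph of $G$ that contains $C$ and whose vertex set is the median closure of $V(C)$ must be $\mc{H}(C)$ — one may also see this by running an SPE-sequence of $C$ and invoking \cref{lem:charact-convexHull} — so $\mc{H}(C)\cong Q_k$.

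For the converse, suppose the convex hull of every isometric cycle of $G$ is a hypercube. First, $G$ is bipartite: a shortest odd cycle would be isometric, and its convex hull — being a hypercube, hence bipartite — could not contain it, a contradiction. Next I would show that $G$ is a partial cube, by arguing that any failure of $\Theta$ to be transitive (equivalently, an edge $xy$ for which one of the two sets $\{v: d(v,x)<d(v,y)\}$, $\{v: d(v,y)<d(v,x)\}$ induces a non-convex subgraph) would force an isometric cycle whose convex hull is not a hypercube; in particular the hypothesis rules out a convex copy of $Q_3^-$, the prototypical obstruction, and a careful case analysis handles the general transitivity failure. Once $G$ is known to be a partial cube, every triple of vertices has \emph{at most} one median, namely its coordinatewise majority computed in the ambient hypercube, so it only remains to show that each triple actually has a median; here again the hull hypothesis — in particular the absence of a convex $Q_3^-$ — is what guarantees that this majority vertex lies in $V(G)$. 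Thus $G$ is a median graph. (Alternatively the converse can be organised as an induction on $|V(G)|$ via the characterization of median graphs by convex expansions \cite{Mulder1978,Mulder1990}, the hull hypothesis being inherited by the two convex sides of a $\Theta$-split.)

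The step I expect to be the genuine obstacle is precisely this passage in the converse, from ``the convex hull of every isometric cycle is a hypercube'' to ``$G$ is a partial cube'': the hypothesis is phrased entirely in terms of cycles, whereas being a partial cube is a statement about the relation $\Theta$, so one must show that every transitivity failure of $\Theta$ is already witnessed by some isometric cycle with a non-hypercube convex hull. The forward direction, by contrast, is routine once the median-closure lemma above is in place.
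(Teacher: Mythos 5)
First, note that the paper does not actually prove \cref{prop:convex-hull-charact}: it is quoted as a known result with pointers to the literature (Bandelt, Mulder), so there is no in-paper argument to compare yours against. Judged on its own terms, your forward direction is essentially a complete and correct proof. The facts you import (median graphs are partial cubes; an isometric cycle of length $2k$ crosses exactly $k$ Djokovi\'c--Winkler classes, each twice, with the second arc flipping them in reverse order) are standard, your median-closure computation --- contiguous runs from prefix/suffix/$v_0$ medians, then unions of runs from medians with $v_k$ --- really does generate the whole subcube $Q_k$, and convexity of a coordinate subcube of $Q_n$ transfers to $G$ along the isometric embedding. Combined with the observation that convex subgraphs of a median graph are closed under medians, this pins down the hull as $Q_k$.

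The converse, however, contains a genuine gap, and it is exactly the one you flag yourself. The two claims ``any transitivity failure of $\Theta$ forces an isometric cycle whose convex hull is not a hypercube'' and ``the hull hypothesis guarantees that the coordinatewise majority of any triple lies in $V(G)$'' are asserted, not proven; they constitute the entire content of the hard direction of this theorem, and neither reduces to the single prototype $Q_3^-$: a non-convex Winkler half-space can arise from obstructions of unbounded size, and you give no mechanism for extracting from such a failure an isometric cycle, let alone one whose convex hull is provably not a hypercube. The sentence ``a careful case analysis handles the general transitivity failure'' is a placeholder for the proof rather than a proof, and likewise for the median-existence step. If you want a self-contained argument, the cleanest route is probably your parenthetical alternative --- induction on $|V(G)|$ via a $\Theta$-split and convex expansion, verifying that the hull hypothesis is inherited by the two sides and their intersection --- but that too must actually be carried out. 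As written, the converse direction is a plan, not a proof.
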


\begin{proposition}\label{lem:basic-median-props}
	For every median graph $G=(V,E)$ the following statements are satisfied.
	\begin{enumerate}[noitemsep]
		\item $G$ is bipartite;
		\item $G$ is $K\lc{2,3}$-free;
		\item (a) $G$ is an induced subgraph of an hypercube and thus, \\
		(b) every edge $e \in E$ that lies on some cycle must be contained in
		some $C\lc{4}$;
		\item (a) for every subgraph $G'$ of $G$, the convex hull
		$\mc{H}(G')$ (w.r.t.\ $G$) is a median graph and thus, \\
		(b) every convex subgraph of $G$ is a median graph.
	\end{enumerate}
\end{proposition}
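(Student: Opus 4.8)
The plan is to establish the four properties more or less independently, using only the median axiom together with the convex-hull characterization \Cref{prop:convex-hull-charact} and, for item (3)(a), the classical fact that median graphs are retracts of hypercubes (equivalently, Mulder's convex-expansion theorem). Items (1) and (2) are short direct arguments from uniqueness of the median; (3)(a) is classical; (3)(b) is a corollary of (3)(a) together with the ``half-space'' structure of a median graph; and (4)(a)--(b) follow from \Cref{prop:convex-hull-charact}.

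For (1) I would argue by contradiction: if $G$ contains an odd cycle, fix a shortest one $C$, of length $2k+1$. A shortest odd cycle is isometric, since a shorter $G$-path between two of its vertices, concatenated with the arc of $C$ of suitable parity, would give a shorter closed odd walk and hence a shorter odd cycle. Writing $C=(x_0,x_1,\dots,x_{2k})$ and taking the triple $x_0,x_k,x_{k+1}$, isometry of $C$ yields $d_G(x_0,x_k)=d_G(x_0,x_{k+1})=k$ and $d_G(x_k,x_{k+1})=1$; any median of this triple must therefore equal $x_k$ or $x_{k+1}$, and in either case one of the three defining equalities fails (by exactly one). So the triple has no median, a contradiction, and $G$ is bipartite. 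For (2), suppose $K_{2,3}\subseteq G$ with degree-$2$ vertices $x,y,z$ and degree-$3$ vertices $a,b$. By (1) each pair among $x,y,z$ lies in the same part of $G$ and is joined by a path of length $2$, hence is at distance exactly $2$; a direct check then shows that $a$ and $b$ are \emph{both} medians of $(x,y,z)$, contradicting uniqueness. Thus $G$ is $K_{2,3}$-free.

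For (3)(a) I would invoke the classical theorem that a median graph is a retract of a hypercube, hence an isometric, hence an induced, subgraph of some $Q_n$; alternatively, by Mulder's theorem $G$ arises from $K_1$ by a sequence of convex expansions, and one checks inductively that each convex expansion preserves being an isometric subgraph of a hypercube (the attached copy receives one fresh coordinate). For (3)(b), let $e=uv$ lie on a cycle of $G$. Using (1) and the standard half-space decomposition attached to $e$ --- the convex sets $W_{uv}=\{x:d(x,u)<d(x,v)\}$ and $W_{vu}$ partition $V(G)$, and the peripheral subgraph $\partial_{uv}$ consisting of the vertices of $W_{uv}$ with a neighbour in $W_{vu}$ is connected and is isomorphic, as an induced subgraph, to $\partial_{vu}$ via the unique-neighbour map $x\mapsto x'$ --- one argues: since $e$ lies on a cycle, some $u$--$v$ path avoids $e$ and must still cross the edge-cut between $W_{uv}$ and $W_{vu}$, so $|\partial_{uv}|\ge 2$; connectivity of $\partial_{uv}$ gives a neighbour $u_1$ of $u$ inside $\partial_{uv}$, and then $u,\,v=u',\,u_1',\,u_1$ is a $4$-cycle through $e$, because the map $x\mapsto x'$ preserves the edge $uu_1$.

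Finally, for (4)(b) let $H$ be a convex subgraph of $G$; it is connected and, being convex, isometric in $G$. Any isometric cycle $C'$ of $H$ is then also an isometric cycle of $G$, so by \Cref{prop:convex-hull-charact} its convex hull in $G$ is a hypercube $Q$; since $H$ is convex and $C'\subseteq H$ we get $Q\subseteq H$, and isometry of $H$ in $G$ makes $Q$ convex in $H$ as well, whence $\mc{H}_H(C')=Q$ is a hypercube and \Cref{prop:convex-hull-charact} shows $H$ is a median graph. Statement (4)(a) is then immediate, since $\mc{H}(G')$ is a convex subgraph of $G$ for every $G'\subseteq G$. The main obstacle is (3)(b): it is the only item that genuinely requires structural information about median graphs beyond the convex-hull characterization, and the cleanest route goes through the half-space/peripheral-subgraph machinery (essentially the inverse of a convex expansion), so the bulk of the work is in setting up or citing that machinery; everything else is a few lines.
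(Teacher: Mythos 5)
Your proposal is correct, but it is doing something the paper does not attempt: the paper states this proposition purely as a summary of well-known facts and gives no proof, only citations to Bandelt, Klav\v{z}ar--Mulder and Mulder. Your arguments for (1), (2) and (4) are sound and essentially self-contained given \Cref{prop:convex-hull-charact}: the shortest-odd-cycle/isometry argument for bipartiteness is the standard one and the parity bookkeeping checks out; the two-medians argument for $K\lc{2,3}$-freeness correctly uses (1) to pin all pairwise distances among $x,y,z$ at $2$; and the transfer of convex hulls between $H$ and $G$ in (4)(b) is carried out correctly (convexity of $H$ in $G$ gives $\mc{H}\lc{G}(C')\subseteq H$, and isometry of $H$ makes the two hulls coincide). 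For (3) you rightly fall back on classical machinery (retracts of hypercubes, resp.\ the $\Theta$-class/half-space structure), which is unavoidable at this level of generality. One point where your write-up is actually more careful than the paper's phrasing: the paper presents (3)(b) as a consequence of (3)(a) (``and thus''), but being an induced subgraph of a hypercube does not by itself force every cycle-edge into a $C\lc{4}$ (the cycle $C\lc{6}$ is an induced subgraph of $Q\lc{3}$), and your separate argument via the peripheral subgraphs $\partial_{uv}\simeq\partial_{vu}$ supplies exactly the missing step. The trade-off is the expected one: the paper buys brevity by citing the literature, while your version makes the dependence explicit --- only (3)(a) and the half-space facts are genuinely imported, everything else reduces to the median axiom and \Cref{prop:convex-hull-charact}.
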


\begin{figure}
	\begin{center}
		\includegraphics[width=0.7\textwidth]{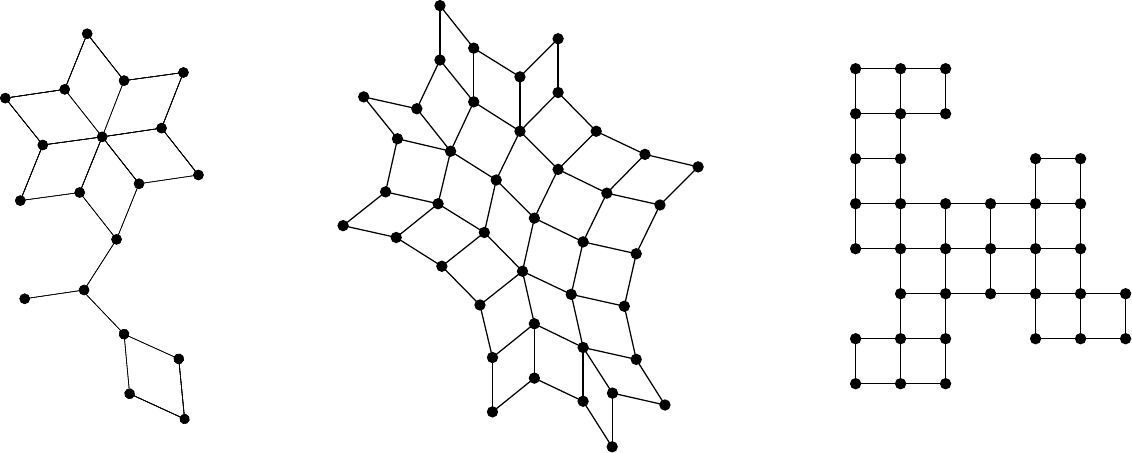}
	\end{center}
	\caption{Three example of square-graphs adapted from \cite[Fig.\ 1.1]{Bandelt2010}.
		\emph{Left:} A square-graph with three articulation
		points. \emph{Middle:} a $2$-connected square-graph. \emph{Right:} a
		so-called ``polyomino''.}
	\label{fig:exampl-square}
\end{figure}

The following type of graphs will play a crucial role for our results.
\begin{definition}\label{def:s-g}
	A \emph{square-graph} is a connected graph for which a planar embedding
	exists such that
	\begin{enumerate}[noitemsep,label=\textbf{(\alph*)}]
		\item every inner boundary is a square, and
		\item every inner vertex has at least degree $4$.
	\end{enumerate}
	Such a planar embedding of a square-graph will always be denoted by
	$\sigma$.
\end{definition}

Simple examples of square-graphs are trees and the 4-cycle.
Further examples of $\sigma$-embedded square-graphs are shown in
\cref{fig:exampl-square}. Below, we will make use of the
following results.
\begin{lemma}\label{lem:square-graph:basic-proper}
	Every square-graph as well as the cube $Q_3$ is a planar median graph
	(cf.\ \cite{Chepoi2002,Chepoi2004,SZP:73}).  Moreover, it can be decided
	in $\mc{O}(|V(G)|+|E(G)|)$ time whether a given graph $G$ is a
	square-graph or not (cf.\ \cite[Prop.\ 5.3]{Bandelt2010}).
\end{lemma}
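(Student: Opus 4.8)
The plan is to prove the two assertions separately, deriving the cube case directly from the convex-hull criterion already available in this section and citing the literature for the combinatorial core of the square-graph cases.

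For the cube, Lemma~\ref{lem:planar-hypercube-3} gives that $Q_3$ is planar, so only the median property is at issue, and I would obtain it from Proposition~\ref{prop:convex-hull-charact}. The graph $Q_3$ is connected, and its isometric cycles are exactly its six $4$-cycle faces together with the four $6$-cycles obtained by deleting a pair of antipodal vertices (that such a deletion yields a $C_6$ is Lemma~\ref{lem:Q3-planar-maxDist}; that no longer isometric cycle exists follows from $|V(Q_3)| = 8$ and bipartiteness of $Q_3$). The convex hull of a $4$-cycle face is that face, i.e.\ $Q_2$, and the convex hull of one of those $6$-cycles is all of $Q_3$, since both deleted vertices lie on shortest paths between vertices of the $6$-cycle. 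In every case the convex hull is a hypercube, so $Q_3$ is a planar median graph.

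For a square-graph $G$ with its embedding $\sigma$, planarity is immediate from \Cref{def:s-g}, so the work is to show $G$ is a median graph; this is precisely the content of \cite{Chepoi2002,Chepoi2004,SZP:73}. If one wants a self-contained route, one again uses Proposition~\ref{prop:convex-hull-charact}, which reduces the claim to two facts about $G$: (i) $G$ is $\{K_3,K_{2,3}\}$-free, which one reads off the face-and-degree structure of $\sigma$ (a triangle cannot bound a square face, and a $K_{2,3}\subseteq G$ would force one of its branch vertices into the interior of one of its $4$-cycles, where the two conditions of \Cref{def:s-g}, that inner faces are squares and inner vertices have degree at least $4$, cannot both hold); by Lemma~\ref{lem:C4convex} this makes every $4$-cycle of $G$ convex, hence with convex hull $Q_2$; and (ii) every isometric cycle of $G$ has length $4$, which one would get from a planar Euler/discharging count: a cycle of length $\ge 6$ encloses a disc tiled by squares with all interior vertices of degree $\ge 4$, and a curvature argument then produces two of its vertices joined inside the disc by a path shorter than their distance along the cycle, contradicting isometry. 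Granting (i) and (ii), Proposition~\ref{prop:convex-hull-charact} yields that $G$ is a planar median graph.

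For the recognition statement I would invoke \cite[Prop.\ 5.3]{Bandelt2010}: one tests planarity and, if $G$ is planar, computes a suitable embedding in $\mathcal{O}(|V(G)|+|E(G)|)$ time, then checks in linear time that every inner face of that embedding is a $4$-cycle and every inner vertex has degree at least $4$. The subtle point, handled in that reference, is that square-graphs need not be $3$-connected, so Theorem~\ref{thm:3c-p} does not provide a unique embedding; one copes with this by descending to the block (and, within blocks, the $2$-cut) decomposition of $G$, for which the square-graph property is local. I expect the genuine obstacle in a fully self-contained proof to be step (ii) above --- that every isometric cycle of a square-graph is a $4$-cycle --- since the discharging argument must carefully handle low-degree boundary vertices of the cycle and its chords; this is exactly why the statement is phrased with ``cf.'', and in the write-up I would keep (ii) as a citation to \cite{Chepoi2002,Chepoi2004,SZP:73} rather than reprove it.
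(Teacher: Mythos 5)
The paper offers no proof of this lemma at all: it is stated as a known fact, with the citations \cite{Chepoi2002,Chepoi2004,SZP:73} and \cite[Prop.\ 5.3]{Bandelt2010} doing all the work. Your proposal is therefore strictly more than the paper provides, and the parts you actually carry out are sound. Your cube argument via Proposition~\ref{prop:convex-hull-charact} is correct; the one small gap is in the enumeration of isometric cycles: ``$|V(Q_3)|=8$ and bipartiteness'' only excludes isometric $8$-cycles, whereas $Q_3$ also contains $6$-cycles obtained by deleting two \emph{adjacent} vertices (e.g.\ $010,011,111,101,100,110$ after deleting $000$ and $001$), and you should note explicitly that these are not isometric (two vertices at cycle-distance $3$ on such a $C_6$, like $111$ and $110$, are adjacent in $Q_3$), so that only the faces and the four antipodal $C_6$'s need their convex hulls checked. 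For the square-graph half you correctly isolate the genuine difficulty --- that every isometric cycle of a $\sigma$-embedded square-graph has length $4$ --- and, like the paper, you leave exactly that to the cited literature; your $\{K_3,K_{2,3}\}$-freeness sketch is plausible but would need the parity/Euler count over the faces inside a triangle or inside a $4$-cycle of a putative $K_{2,3}$ spelled out. In short: you take the same route as the paper (citation) where it matters, and the additional self-contained material you supply for $Q_3$ is correct modulo the one enumeration detail above.
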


It has been shown by \citet{SZP:73} and \citet[Prop.\ 9.1]{Bandelt2010}
that for every $\sigma$-embedded square-graph $G$ every square in $G$ is an
inner boundary.  This, together with the definition of square-graphs,
implies
\begin{lemma}\label{lem:square-faces-squaregraph}
	Let $G$ be $\sigma$-embedded square-graph. Then, $C\subseteq G$ is a
	square if and only if $C$ is an inner boundary in $G$ w.r.t.\ $\sigma$.
	Consequently, $C$ is a square if and only if $C$ is a square-boundary
	w.r.t.\ $\sigma$.  Moreover, every square-graph that contains $k$
	squares, satisfies $k$-FS (w.r.t.\ $\sigma$) and every cyclic square
	graph satisfies $1$-FS (w.r.t.\ $\sigma$).
\end{lemma}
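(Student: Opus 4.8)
The plan is to bootstrap everything from the fact recalled just above the statement, namely that in every $\sigma$-embedded square-graph each square is an inner boundary (\citet{SZP:73} and \cite[Prop.\ 9.1]{Bandelt2010}); I will refer to this as fact $(\ast)$. First I would prove the equivalence ``$C\subseteq G$ is a square if and only if $C$ is an inner boundary of $G$ w.r.t.\ $\sigma$''. The implication ``$\Rightarrow$'' is precisely $(\ast)$, while ``$\Leftarrow$'' is immediate from condition~(a) of \Cref{def:s-g}, which requires every inner boundary of a square-graph to be a square.

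Next I would deduce the equivalence with square-boundaries. If $C$ is a square, then by the previous step $C$ is an inner boundary, hence it bounds an inner face of $G$ w.r.t.\ $\sigma$; being a square that bounds a face w.r.t.\ $\sigma$, it is by definition a square-boundary w.r.t.\ $\sigma$. Conversely, a square-boundary w.r.t.\ $\sigma$ is a square by definition. The only point requiring a moment's care is the degenerate case in which $G$ itself is a $4$-cycle, where the unique square of $G$ is simultaneously the inner and the outer boundary; this causes no difficulty, since for the argument we only need that $C$ bounds \emph{some} face.

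Finally, for the $k$-FS claims, let $C_1,\dots,C_k$ be the pairwise distinct squares of $G$. By $(\ast)$ each $C_i$ is the boundary of some inner face $F_i$ of the $\sigma$-embedding, and since every face has a unique boundary subgraph, distinct $C_i$ force distinct $F_i$; thus at least $k$ faces of the $\sigma$-embedding are bounded by squares, i.e.\ $G$ satisfies $k$-FS w.r.t.\ $\sigma$. For a cyclic square-graph $G$ the embedding $\sigma$ cannot be that of a tree, so $G$ has at least one inner face, whose boundary is an inner boundary and hence a square by condition~(a) of \Cref{def:s-g}; therefore $G$ satisfies $1$-FS w.r.t.\ $\sigma$. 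I do not expect a genuine obstacle here: all of the substance sits in $(\ast)$, and the only things to watch are phrasing the square-boundary direction correctly and not over-counting faces in the degenerate $4$-cycle case.
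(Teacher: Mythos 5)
Your proposal is correct and follows exactly the route the paper intends: the paper gives no separate proof but derives the lemma directly from the cited fact of \citet{SZP:73} and \cite[Prop.\ 9.1]{Bandelt2010} (your $(\ast)$) combined with condition (a) of \Cref{def:s-g}, which is precisely your argument. Your additional care about the degenerate $4$-cycle case and the face-counting for $k$-FS is sound and adds nothing beyond what the paper's one-line justification already presupposes.
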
		

Recall that every face of planar graph can be both an inner and the outer
face depending on the choice of the embedding. This, together with
\cref{obs:cycle-bound} and the fact that in a $\sigma$-embedded
square-graph and $\rho$-embedded cube every square is square-boundary,
implies
\begin{obs}\label{obs:squaregraph-cube-embedding} 
	Let $G$ be a square-graph with planar embedding $\pi=\sigma$ or a cube
	with planar embedding $\pi=\rho$.  For every square $C$ of $G$ we can
	adjust $\pi$ to a planar embedding $\pi_C$ such that $C$ becomes an outer
	boundary while all other squares distinct from $C$ are inner boundaries
	w.r.t.\ $\pi_C$.  In case $G$ is a square-graph that contains at least
	two squares, there exists an inner face that is bounded by a square
	w.r.t.\ $\pi_C$.
\end{obs}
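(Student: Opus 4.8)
The plan is to obtain $\pi_C$ from the given embedding by a single re-choice of which face is unbounded, and then to track which faces end up bounded by squares. I handle the two cases of the hypothesis in turn.

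Suppose first that $G$ is a $\rho$-embedded cube and $C$ is a square of $G$. By \Cref{obs:Cube-Embedding} the square $C$ is a boundary of $G$ w.r.t.\ $\rho$, so \Cref{obs:cycle-bound} supplies a planar embedding $\pi_C$ of $G$ in which $C$ is the outer boundary. Since $\pi_C$ is again a planar embedding of a cube, \Cref{obs:Cube-Embedding} applies to it as well: a subgraph of $G$ is a square precisely if it is an inner or outer boundary w.r.t.\ $\pi_C$. As the outer boundary w.r.t.\ $\pi_C$ is $C$ itself, every square $C'\neq C$ must be an inner boundary w.r.t.\ $\pi_C$, and any one of them already exhibits an inner face bounded by a square.

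Now suppose $G$ is a $\sigma$-embedded square-graph and $C$ is a square of $G$. By \Cref{lem:square-faces-squaregraph}, $C$ is an inner boundary w.r.t.\ $\sigma$, hence $C$ bounds some inner face $F$ of $\sigma$. Viewing $\sigma$ as an embedding of $G$ on $\mathbb{S}^2$ and declaring $F$ to be the unbounded face yields a planar embedding $\pi_C$ of $G$ in which $C$ is the outer boundary and in which the boundary of every face other than $F$ is exactly what it was under $\sigma$. Let $C'\neq C$ be another square of $G$. Then $G$ has at least two squares, so $G$ is not a cycle, and therefore distinct faces of $G$ carry distinct boundaries. By \Cref{lem:square-faces-squaregraph}, $C'$ bounds an inner face $F'$ of $\sigma$, and $F'\neq F$ since $C'\neq C$; consequently $F'$ is still a bounded face under $\pi_C$, i.e.\ $C'$ is an inner boundary w.r.t.\ $\pi_C$. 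In particular, when $G$ contains at least two squares, choosing any square $C'\neq C$ produces an inner face of $\pi_C$---namely the one bounded by $C'$---that is bounded by a square, which is the final assertion.

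The only delicate point, and the one I expect to require the most care in the write-up, is the square-graph case: square-graphs need not be $3$-connected, so \Cref{thm:3c-p} is unavailable and one cannot simply say that $\pi_C$ is again a ``$\sigma$-type'' embedding of a square-graph. The remedy, used above, is to argue directly on $\mathbb{S}^2$: moving from $\sigma$ to $\pi_C$ only swaps the role of the outer face, leaving every other face---and hence, via the identification of squares with inner boundaries from \Cref{lem:square-faces-squaregraph}, every square distinct from $C$---together with its boundary intact. Once this is granted, all the claimed properties of $\pi_C$ follow at once.
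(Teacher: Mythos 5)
Your proof is correct and follows essentially the same route as the paper: the observation is derived there directly from \Cref{obs:cycle-bound} (re-choosing the outer face via the sphere embedding) together with the fact that every square of a $\rho$-embedded cube or $\sigma$-embedded square-graph is a square-boundary (\Cref{obs:Cube-Embedding}, \Cref{lem:square-faces-squaregraph}). You merely spell out in more detail the point the paper leaves implicit, namely that swapping the outer face leaves all other faces and their (square) boundaries intact.
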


In particular, \citet{Bandelt2010} characterized square-graphs in terms of
forbidden subgraphs of median graphs.
\begin{proposition}[{\cite[Prop.\ 5.1 (i,
		ii)]{Bandelt2010}}] \label{prop:square-from-median} Let $G$ be a
	graph. Then, $G$ is a square-graph if and only if $G$ is a median graph
	such that $G$ does not contain any of the following graphs as induced
	subgraphs (or isometric subgraphs or convex subgraphs, respectively): the
	cube $Q\lc{3}$, the book $K\lc{2}\Box K\lc{1,3}$, and suspended
	cogwheel.
\end{proposition}

\begin{figure}
	\begin{center}
		\includegraphics[width=0.75\textwidth]{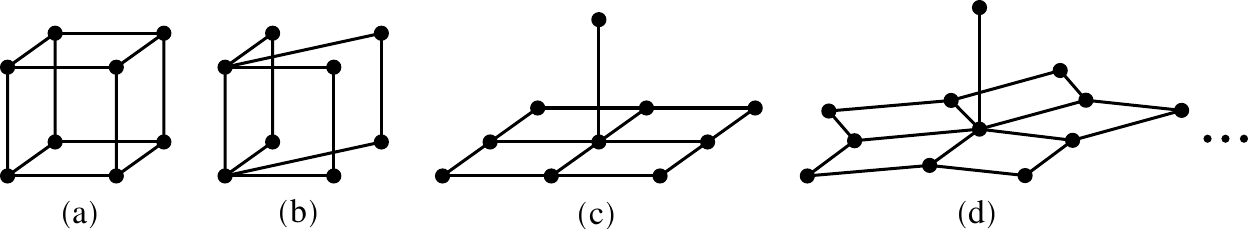}
	\end{center}
	\caption{Forbidden induced subgraphs of square-graphs adapted from \cite[Fig.\
		5.1]{Bandelt2010}. (a) Cube $Q\lc{3}$; (b) the book
		$K\lc{2}\Box K\lc{1,3}$; (c) and (d) the first two suspended cogwheels
		$M_8^*$ and $M_{10}^*$.}
	\label{fig:forb-subgraphs-square}
\end{figure}

\section{Characterization of Planar Graphs that are Median Graphs}
\label{sec:planmed}

In this section, we present new characterizations for planar graphs being median graphs.
To this end, we need the following
\begin{lemma} \label{lem:median->iso4-6} Let $G$ be a planar median
	graph. Then, the length of every isometric cycle of $G$ is either $4$ or
	$6$.
	\label{lem:length-isoC}
\end{lemma}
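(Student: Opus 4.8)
The goal is to show that an isometric cycle in a planar median graph has length $4$ or $6$. I would combine two facts available in the excerpt: first, by \Cref{prop:convex-hull-charact}, the convex hull $\mc{H}(C)$ of an isometric cycle $C$ in a median graph is a hypercube $Q_n$; second, by \Cref{lem:planar-hypercube-3}, a planar graph contains no $Q_n$ with $n\ge 4$, so $n\le 3$. Hence $\mc{H}(C)\in\{K_1,K_2,Q_2,Q_3\}$. Since $C$ is a cycle of length $\ge 3$ (by the definition of cycle in the Preliminaries), the cases $n=0$ and $n=1$ are impossible, leaving $\mc{H}(C)\in\{Q_2,Q_3\}$.

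\textbf{Finishing the two surviving cases.} If $\mc{H}(C)=Q_2$, then $C$ is an isometric cycle contained in a $4$-cycle, so $C$ itself is a $4$-cycle. If $\mc{H}(C)=Q_3$, I must argue that the only isometric cycles inside a cube are $4$-cycles and $6$-cycles. This is a finite check on the cube: up to symmetry the cycles of $Q_3$ have length $4$, $6$, or $8$, and the (unique, up to symmetry) $8$-cycle is not isometric because any two antipodal vertices of $Q_3$ that both lie on it are at distance $3$ in $Q_3$ but at distance $4$ along the $8$-cycle. (Equivalently: $Q_3$ has diameter $3$, so a cycle of length $8$ cannot be isometric, since some pair of its vertices would be at cycle-distance $4>3$.) A $6$-cycle in $Q_3$, on the other hand, is isometric — this is exactly the $6$-cycle $Q_3-\{v,w\}$ with $d_{Q_3}(v,w)=3$ appearing in \Cref{lem:Q3-planar-maxDist} — so length $6$ genuinely occurs. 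Thus the isometric cycle $C$ has length $4$ or $6$, as claimed.

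\textbf{Where the work is.} There is essentially no obstacle here; the only slightly non-routine point is the elementary enumeration of cycle lengths in $Q_3$ and the diameter argument ruling out the $8$-cycle, both of which are immediate. One subtlety worth stating carefully: $C$ being isometric in $G$ does not by itself say $C$ is isometric in $\mc{H}(C)$, but since $\mc{H}(C)$ is an isometric (indeed convex) subgraph of $G$ containing $C$, distances between vertices of $C$ are the same computed in $G$, in $\mc{H}(C)$, or along $C$, so $C$ is isometric in the cube $\mc{H}(C)=Q_3$ as well — which is exactly what the diameter argument needs.
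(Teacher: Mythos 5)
Your proof is correct and follows essentially the same route as the paper's: both rest on \Cref{prop:convex-hull-charact} (the convex hull of an isometric cycle is a hypercube), \Cref{lem:planar-hypercube-3} (planarity forces dimension at most $3$), and the observation that $Q_3$ has diameter $3$ so an $8$-cycle cannot be isometric. The only cosmetic difference is that the paper excludes odd lengths by citing bipartiteness of the median graph and argues by contradiction from $n\ge 8$, whereas you organize the argument as a case split on whether the hull is $Q_2$ or $Q_3$ and absorb the parity issue into the enumeration of cycle lengths in $Q_3$.
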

\begin{proof}
	Let $G$ be a planar median graph, and let $C_n$ with $n \ge 3$ be an
	isometric cycle of $G$.  First, we show $n \le 7$.  To this end, we
	assume for contradiction that $n \ge 8$.  Then,
	\cref{prop:convex-hull-charact} implies that there is a hypercube $Q_m$
	with $C_n \subseteq Q_m \subseteq G$, and \Cref{lem:planar-hypercube-3}
	implies that $m \le 3$.  Since $n \geq 8$, we have
	$C_n \subseteq Q\lc{3}$ and $n=8$. Thus, $V(C_8) = V(Q_3)$.  Since $C_n$
	contains two vertices at distance $\frac{n}{2}\ge 4$ and the diameter
	(i.e., the greatest distance) of $Q_3$ is $3$, this $C_n$ cannot be
	isometric; a contradiction.  Hence, $n \le 7$. Since every median graph
	is bipartite it cannot contain odd cycles.  Therefore, $n = 4$ or
	$n = 6$.
\end{proof}

\begin{lemma} \label{lem:trianglefree+K23-free->Q3-stuff} Let $G$ be a
	$\{K\lc{3},K\lc{2,3}\}$-free graph.  Moreover, let $Q\lc{3}\subseteq G$
	be a cube, and let $v,w \in V(Q\lc{3})$ with
	$d_{Q\lc{3}}(v,w)\in \{1,2\}$.  Then, for every shortest path
	$P^\star_G(v,w)$ in $G$, we have $P^\star_G(v,w)\subseteq Q\lc{3}$.
	Moreover, if $G$ is additionally planar, then
	$P^\star_G(v,w)\subseteq Q\lc{3}$ for all $v,w \in V(Q\lc{3})$.
\end{lemma}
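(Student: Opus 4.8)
The plan is to prove the two claims in turn, the first for general $\{K_3,K_{2,3}\}$-free graphs and the second by adding planarity. For the first part, fix a cube $Q_3\subseteq G$ and vertices $v,w\in V(Q_3)$ with $d_{Q_3}(v,w)\in\{1,2\}$. Since $Q_3$ is an (isometric) subgraph of the hypercube and $G$ is $K_3$-free, $d_G(v,w)=d_{Q_3}(v,w)$; indeed adjacent vertices of $Q_3$ stay at distance $1$, and if $d_{Q_3}(v,w)=2$ then a shorter path in $G$ would force an edge $\{v,w\}$, creating a triangle with either common neighbour of $v,w$ in $Q_3$. So it suffices to show every shortest $v$–$w$ path in $G$ stays inside $Q_3$. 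If $d_G(v,w)=1$ there is nothing to prove. If $d_G(v,w)=2$, let $P=v\,x\,w$ be any shortest path; inside $Q_3$ the vertices $v$ and $w$ already have exactly two common neighbours, say $a,b$, forming a square $v\,a\,w\,b$. If $x\notin\{a,b\}$, then $\{v,w\}$ together with the three common neighbours $a,b,x$ would yield a $K_{2,3}$ subgraph, contradicting $K_{2,3}$-freeness. Hence $x\in\{a,b\}\subseteq V(Q_3)$, and $P\subseteq Q_3$.

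For the second, stronger part, assume in addition that $G$ is planar and take arbitrary $v,w\in V(Q_3)$; the only new case is $d_{Q_3}(v,w)=3$, i.e. $v$ and $w$ are antipodal in the cube. Here I would invoke \Cref{lem:Q3-planar-maxDist}: $Q_3-\{v,w\}$ is a $6$-cycle $C$, and for every planar embedding $\pi$ of $G$, exactly one of $v,w$ lies in the inner face of $C$ and the other in the outer face (w.r.t.\ the $(G,\pi)$-induced embedding of $C$). Now suppose for contradiction that some shortest $v$–$w$ path $P^\star_G(v,w)$ is not contained in $Q_3$, so it uses a vertex $u\notin V(Q_3)$. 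Fix an embedding $\pi$. The path $P^\star_G(v,w)$ starts at $v$ (say, in the inner face side) and ends at $w$ (on the outer face side), so it must cross the closed curve formed by $C$; since the path is a subgraph of $G$ and $C$ is a cycle in the plane, crossing is only possible through a vertex of $C$. Let $a'$ be the first vertex of $C$ met along $P^\star_G(v,w)$ starting from $v$, and $b'$ the last such vertex before reaching $w$. Then $P^\star_G(v,w)$ decomposes as $P^\star_G(v,a')\cup P^\star_G(a',b')\cup P^\star_G(b',w)$ with all three pieces shortest paths, $a',b'\in V(C)\subseteq V(Q_3)$, and $d_{Q_3}(a',b')\le 3$.

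The remaining work is to push the contradiction back to the already-proved cases. Since $a',b'$ are cube vertices at $Q_3$-distance at most $2$ (they lie on the $6$-cycle $C=Q_3-\{v,w\}$, whose vertices are pairwise at $Q_3$-distance $\le 2$ unless they are antipodal in $C$, which corresponds to $Q_3$-distance $2$, not $3$), the first part gives $P^\star_G(a',b')\subseteq Q_3$; and $P^\star_G(v,a')$, $P^\star_G(b',w)$ are shortest paths between cube vertices at $Q_3$-distance $\le 3$ whose interiors, by the choice of $a'$ and $b'$ as first/last meeting points, lie strictly on one side of $C$ — but any such path from $v$ to $a'$ staying on the inner side, with $d_{Q_3}(v,a')\le 2$, is already forced into $Q_3$ by the first part, and similarly for $b'$ to $w$. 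This exhibits the whole of $P^\star_G(v,w)$ inside $Q_3$, contradicting the existence of $u$. I expect the main obstacle to be making the planar-crossing argument airtight — i.e. justifying cleanly that a path joining a vertex strictly inside $C$ to a vertex strictly outside $C$ must pass through $V(C)$, and that the first/last meeting vertices $a',b'$ have small enough $Q_3$-distance — rather than any metric subtlety, which is handled by the $K_{2,3}$-free argument of the first part together with \Cref{lem:in-out-C4}.
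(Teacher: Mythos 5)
Your first part is correct and is essentially the paper's argument: the paper routes it through \Cref{lem:C4convexprime} --- sorry, through \Cref{lem:C4convex} (convexity of $4$-cycles in $\{K_3,K_{2,3}\}$-free graphs), whose proof is exactly your ``third common neighbour gives a $K_{2,3}$'' observation. For the planar part you also follow the paper's strategy (\Cref{lem:Q3-planar-maxDist} plus the fact that a path joining a vertex inside the $6$-cycle $C=Q_3-\{v,w\}$ to one outside must pass through $V(C)$, then reduce to the first part). However, your three-piece decomposition contains a false load-bearing claim: you assert that the vertices of $C$ are pairwise at $Q_3$-distance at most $2$, with antipodal pairs of $C$ at $Q_3$-distance $2$. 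This is wrong. Writing $v=000$, $w=111$, the cycle $C$ is $100\,110\,010\,011\,001\,101$, and its antipodal pairs (e.g.\ $100$ and $011$) differ in all three coordinates, so they are at $Q_3$-distance $3$. Consequently, if the first and last crossing vertices $a',b'$ happen to be such a pair, the first part of the lemma does not apply to the middle piece $P^\star_G(a',b')$, and your argument is circular at exactly the case it is meant to settle.

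The gap is patchable but needs an actual argument. One route: since $v$ and $w$ lie on opposite sides of $C$ and an edge cannot pass through a vertex, one gets $d_G(v,w)=3$, so the middle piece has length at most $1$; a $G$-edge between $Q_3$-antipodal vertices must then be excluded separately (it is not instantly a triangle or a $K_{2,3}$). The paper avoids all of this by splitting at a \emph{single} vertex $u\in V(C)$ of the path: every vertex of $C$ is $Q_3$-adjacent to exactly one of $v,w$, so in either case the two resulting subpaths join cube vertices at $Q_3$-distance in $\{1,2\}$ and the first part applies to both. I recommend adopting that two-piece split; it removes the problematic middle segment entirely.
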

\begin{proof}
	Let $G$ be a $\{K\lc{3},K\lc{2,3}\}$-free graph and $v,w \in V(Q\lc{3})$
	with $Q\lc{3}\subseteq G$ and $d_{Q\lc{3}}(v,w)\in\{1,2\}$.  Clearly, if
	$d_{Q\lc{3}}(v,w)=1$, then $P^\star_G(v,w)$ is an edge that must be
	contained in $Q\lc{3}$.  Now, assume that $d_{Q\lc{3}}(v,w)=2$. Then, it
	is easy to see that there is a (unique) square $C \subseteq Q\lc{3}$ with
	$v,w \in V(C)$.  Since $G$ is $\{K\lc{3},K\lc{2,3}\}$-free, we can apply
	\Cref{lem:C4convex} to conclude that $C$ is a convex subgraph of $G$.
	Hence, every shortest path $P^\star_G(v,w)$ between $v$ and $w$ is
	contained in a square $C\subseteq Q\lc{3}$.
	
	Now, assume that $G$ is planar in addition and let $\pi$ be an arbitrary
	planar embedding of $G$.  By the latter arguments, it suffices to
	consider vertices $v,w\in V(Q_3)$ with $d_{Q\lc{3}}(v,w)=3$.  By Lemma
	\ref{lem:Q3-planar-maxDist}, $Q\lc{3}-\{v,w\}$ results in a $6$-cycle $C$
	and $v$ is located in the inner face and $w$ in the outer face of $C$ or
	\emph{vice versa} w.r.t.\ the $(G,\pi)$-induced embedding of $C$.  This
	and the fact that $\pi$ is a planar embedding of $G$ implies that
	$P^\star_G(v,w)$ contains (at least) one vertex $u$ of
	$C$. Hence, there are shortest paths $P^\star_G(v,u)$ and
	$P^\star_G(u,w)$ such that
	$P^\star_G(v,w)=P^\star_G(v,u)\cup P^\star_G(u,w)$. 
	We distinguish 
	two (mutually exclusive) cases: \textit{(i)}
	$\{v,u\} \in E(Q\lc{3})\subseteq E(G)$ and \textit{(ii)}
	$\{u,w\}\in E(Q\lc{3})\subseteq E(G)$.
	
	In Case \textit{(i)} we have $P^\star_G(v,u) \subseteq Q\lc{3}$ and
	$d_{Q\lc{3}}(u,w)=2$.  By the latter arguments,
	$P^\star_G(u,w) \subseteq Q\lc{3}$.  Hence,
	$P^\star_G(v,w)=P^\star_G(v,u)\cup P^\star_G(u,w)\subseteq Q\lc{3}$.
	Similar arguments imply in Case \textit{(ii)} that
	$P^\star_G(v,w)\subseteq Q\lc{3}$.
\end{proof}

For later reference, we show that every isometric cycle $C\lc{6}$ and every
$Q^-\lc{3}$ of a median graph must be contained in a cube.
\begin{lemma}\label{lem:mg->Q3-Q3-infer}
	Every median graph is \QQinfer and \CQinfer.
\end{lemma}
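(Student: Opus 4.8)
The plan is to prove the two assertions separately; both reduce quickly to \Cref{prop:convex-hull-charact} (the convex hull of an isometric cycle of a median graph is a hypercube) together with the elementary facts in \Cref{lem:basic-median-props}, so I would not need any planarity here.

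For the \CQinfer\ part, I would take an isometric $6$-cycle $C$ of the median graph $G$ and let $\mc{H}(C)$ be its convex hull. By \Cref{prop:convex-hull-charact}, $\mc{H}(C)$ is a hypercube $Q_m$, which I identify with $\{0,1\}^m$; since $\mc{H}(C)$ is a subgraph of $G$, it suffices to show $m=3$. As $\mc{H}(C)$ is convex, distances inside it agree with those in $G$, and because $C$ is isometric it contains two vertices at distance $3$; hence $m\ge 3$. For the reverse inequality I label each edge of $Q_m$ by the coordinate in which its endpoints differ: traversing $C$ once is a closed walk, so every coordinate occurring as a label along $C$ is flipped an even — hence at least two — number of times, and since $C$ has only $6$ edges at most three coordinates occur. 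Fixing the remaining coordinates exhibits $C$ inside a subcube $Q'$ of dimension $m'\le 3$; as subcubes of a hypercube are convex, $\mc{H}(C)\subseteq Q'$ forces $m\le m'\le 3$. Thus $m=3$ and $\mc{H}(C)\cong Q\lc{3}$ is a cube of $G$ containing $C$.

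For the \QQinfer\ part, I would take $H\subseteq G$ with $H\cong Q^-\lc{3}$, i.e.\ $H$ is a cube with one vertex $v_0$ (and its incident edges) deleted. Abstractly $H$ has exactly three vertices of degree $2$, namely the former neighbours $a_1,a_2,a_3$ of $v_0$; they are pairwise non-adjacent in $H$ with $d_H(a_i,a_j)=2$, and since $G$ is bipartite (\Cref{lem:basic-median-props}) all $a_i$ lie in one part, so $d_G(a_i,a_j)$ is positive, at most $2$, and even, hence $d_G(a_i,a_j)=2$ for all $i\ne j$. Now set $z:=\med\lc{G}(a_1,a_2,a_3)$, which exists because $G$ is a median graph. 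The three defining median equations give $d_G(a_i,z)+d_G(z,a_j)=2$ for all $i\ne j$; summing them and comparing yields $d_G(a_i,z)=1$ for every $i$, so $z$ is adjacent to $a_1,a_2,a_3$. Then $H$ together with $z$ and the edges $za_1,za_2,za_3$ is, under the identification with $\{0,1\}^3$ sending $v_0$ to $z$, a subgraph of $G$ isomorphic to $Q\lc{3}$ and containing $H$, as required.

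Neither argument is deep, and most of it is distance bookkeeping once \Cref{prop:convex-hull-charact} and bipartiteness are in hand. I expect the only slightly delicate point to be the dimension count in the first part — the observation that a simple $6$-cycle, being a closed walk, cannot span a hypercube of dimension larger than $3$ because each coordinate it uses is flipped at least twice along its six edges.
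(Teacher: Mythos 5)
Your \CQinfer argument follows the same route as the paper's: the convex hull of the isometric $C\lc{6}$ is a hypercube by \Cref{prop:convex-hull-charact}, which must be a $Q\lc{3}$. Your coordinate-counting justification that the hull has dimension exactly $3$ is in fact more explicit than the paper's, which essentially infers this from $|V(C\lc{6})|=6$ alone. Your \QQinfer argument, by contrast, is genuinely different. The paper first shows, by a case analysis on possible shortcuts (using bipartiteness and $K\lc{2,3}$-freeness), that the isometric $6$-cycle inside the given $Q^-\lc{3}$ remains isometric in $G$, takes its convex hull to obtain a cube, and then assembles the desired cube from a vertex of that hull together with the seventh vertex of the $Q^-\lc{3}$. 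You instead manufacture the missing eighth vertex directly as $z=\med\lc{G}(a_1,a_2,a_3)$; this is shorter, invokes the median axiom itself rather than \Cref{prop:convex-hull-charact}, and the distance bookkeeping forcing $d_G(a_i,z)=1$ is correct.

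However, one step is missing from the \QQinfer part: you must check that $z\notin V(H)$, since otherwise ``$H$ together with $z$'' has only seven vertices and cannot be isomorphic to $Q\lc{3}$. This is true, but it needs an argument of exactly the flavour the paper uses elsewhere. Write $b\lc{12},b\lc{13},b\lc{23}$ for the degree-$3$ vertices of $H$ adjacent to pairs of the $a_i$, and $w$ for the remaining vertex of $H$, which is adjacent to all three $b\lc{ij}$. Then $z=a_i$ is excluded because $d_G(a_i,z)=1\neq 0$; $z=w$ would make $w,b\lc{12},a_1$ a triangle in $G$, contradicting bipartiteness (\Cref{lem:basic-median-props}~(1)); and $z=b\lc{ij}$ would make $b\lc{ij}$ adjacent to the third vertex $a_k$, so that $a_k$ and $w$ together with $b\lc{12},b\lc{13},b\lc{23}$ span a $K\lc{2,3}$ in $G$, contradicting \Cref{lem:basic-median-props}~(2). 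With this check inserted, your proof is complete.
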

\begin{proof}
	Let $G$ be a median graph. First, let $Q^-_3\subseteq G$. Consider the
	unique subgraph $C\lc{6} \subseteq Q^-_3$ that is an isometric subgraph
	in $Q^-_3$.  Assume, for contradiction, that this $C\lc{6}$ is not an
	isometric subgraph of $G$, i.e., that there are two vertices
	$a,b \in V(C\lc{6})$ with $d\lc{C\lc{6}}(a,b)>d\lc{G}(a,b)\ge 1$ and,
	therefore, $d\lc{C\lc{6}}(a,b)\in \{2,3\}$. If $d\lc{C\lc{6}}(a,b)=2$,
	then $d\lc{G}(a,b) = 1$. Thus, there must be a $K\lc{3}$ in $G$; a
	contradiction to \cref{lem:basic-median-props}~(1). Moreover, if
	$d\lc{C\lc{6}}(a,b)=3$ and $d\lc{G}(a,b)=2$, then $G$ must contain a
	$K_3$ or $C_5$; again a contradiction to
	\cref{lem:basic-median-props}~(1). Finally, assume that
	$d\lc{C\lc{6}}(a,b)=3$ and $d\lc{G}(a,b)=1$. Let $x,y,z \in V(C\lc{6})$
	be the three vertices that have degree $3$ in $Q^-_3$. Among these
	vertices $x,y,z$ has to be $a$ or $b$; w.l.o.g.\ assume that $x=a$. Note,
	there is a vertex $v \in V(Q^-_3)\setminus V(C_6)$ that is adjacent to
	every vertex in $\{x,y,z\}$.  Moreover, $b$ is adjacent to every vertex
	in $\{x,y,z\}$ in $G$, since $d\lc{G}(x,b)=1$. Hence, there is a
	$K\lc{2,3}$ with $V(K_{2,3}) = \{x,y,z,v,b\}$ in $G$; which is a
	contradiction to \cref{lem:basic-median-props}~(2). Thus, $C\lc{6}$ is an
	isometric subgraph of $G$.
	
	Thus, by \cref{prop:convex-hull-charact} there is a hypercube that
	contains $C_6$. Since we have $|V(C\lc{6})|=6$, we conclude that there is
	a cube $Q \subseteq G$ with $C\lc{6}\subseteq Q$.  Let $w$ be the unique
	vertex in $Q$ that is \emph{not} adjacent to $x,y$ and $z$ in $Q$ and let
	$x',y',z'$ be the three vertices in $C_6$ that are adjacent to $w$ in
	$Q$, where $x'$ is adjacent to $x$ and $y$, $y'$ is adjacent to $y$, and
	$z$ and $z'$ is adjacent to $x$ and $z$.  The graph $H$ with vertices
	$V(C)\cup \{v,w\}$ and edge set
	$\big\{\{x',x\}, \{x',y\}, \{y',y\}, \{y',z\}, \{z',x\}, \{z',z\},
	\{v,x\}, \{v,y\}, \{v,z\}, \{w,x'\}, \{w,y'\}, \{w,z'\}\big\}$ is by the
	preceding arguments a subgraph of $G$ and, in particular, a cube for
	which $H-\{w\}$ is equal to the graph $Q^-_3$ chosen at the beginning of
	this proof.  Hence, $G$ is \QQinfer.
	
	Now, let $C\lc{6}$ be an isometric cycle of $G$.  Then,
	\cref{prop:convex-hull-charact}, together with the previous arguments,
	imply that there is a cube $Q\lc{3}\subseteq G$ with
	$C\lc{6}\subseteq Q\lc{3} $. Hence, $G$ is \CQinfer.
\end{proof}

\begin{lemma} \label{lem:C6+Q3->conv-hull} Let $G$ be
	$\{K\lc{3},K\lc{2,3}\}$-free, planar and \CQinfer graph.  Then, the
	convex hull of every isometric cycle $C\lc{6}$ in $G$ is a $Q\lc{3}$.
	Moreover, if $C\lc{6}$ is an isometric cycle in a cube
	$Q\lc{3}\subseteq G$, then $C\lc{6}$ is an isometric cycle in
	$G$. 
\end{lemma}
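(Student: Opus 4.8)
The plan is to derive both assertions from one key fact: by the planar part of \Cref{lem:trianglefree+K23-free->Q3-stuff}, every cube $Q\lc{3}\subseteq G$ is a \emph{convex} subgraph of $G$ (every shortest path of $G$ between two vertices of $Q\lc{3}$ already lies in $Q\lc{3}$), and hence in particular an isometric subgraph of $G$.

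For the first assertion, let $C\lc{6}$ be an isometric cycle of $G$. Since $G$ is \CQinfer, there is a cube $Q\lc{3}\subseteq G$ with $C\lc{6}\subseteq Q\lc{3}$. As $Q\lc{3}$ is a convex subgraph of $G$ containing $C\lc{6}$, the definition of the convex hull gives $\mc{H}(C\lc{6})\subseteq Q\lc{3}$. For the reverse inclusion, recall that deleting the two antipodal vertices $v,w$ of $Q\lc{3}$ (those with $d_{Q\lc{3}}(v,w)=3$) leaves exactly a $6$-cycle (cf.\ \Cref{lem:Q3-planar-maxDist}); since $|V(Q\lc{3})|=8$ and $|V(C\lc{6})|=6$, these $v$ and $w$ are precisely the two vertices of $Q\lc{3}$ not on $C\lc{6}$. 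In $Q\lc{3}$ the vertex $v$ is adjacent to three vertices of $C\lc{6}$ that are pairwise at distance $2$ in $Q\lc{3}$; pick two of them, $a$ and $b$. Because $Q\lc{3}$ is isometric in $G$ we have $d\lc{G}(a,b)=d_{Q\lc{3}}(a,b)=2$, so $v$ lies on a shortest path of $G$ between the two $C\lc{6}$-vertices $a$ and $b$ (namely the one consisting of the edges $\{a,v\}$ and $\{v,b\}$). As $\mc{H}(C\lc{6})$ is a convex subgraph of $G$ containing $a$ and $b$, it must contain $v$; the same argument shows $w\in V(\mc{H}(C\lc{6}))$. Hence $V(Q\lc{3})\subseteq V(\mc{H}(C\lc{6}))$, and since convex subgraphs are induced, $Q\lc{3}\subseteq G[V(Q\lc{3})]\subseteq \mc{H}(C\lc{6})$. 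Combining the two inclusions yields $\mc{H}(C\lc{6})=Q\lc{3}$. (Equivalently, one could run a shortest-path-extension of $C\lc{6}$: it first inserts the three edges incident to $v$, producing a $Q^-\lc{3}$, then the three edges incident to $w$, producing the convex graph $Q\lc{3}$; \Cref{lem:charact-convexHull} then identifies the resulting SPE-graph with $\mc{H}(C\lc{6})$.)

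For the second assertion, let $C\lc{6}$ be an isometric cycle in some cube $Q\lc{3}\subseteq G$, i.e.\ $d_{C\lc{6}}(a,b)=d_{Q\lc{3}}(a,b)$ for all $a,b\in V(C\lc{6})$. Since $Q\lc{3}$ is an isometric subgraph of $G$ (again by the planar part of \Cref{lem:trianglefree+K23-free->Q3-stuff}) and $V(C\lc{6})\subseteq V(Q\lc{3})$, we have $d_{Q\lc{3}}(a,b)=d\lc{G}(a,b)$ for all $a,b\in V(C\lc{6})$. Chaining these two equalities gives $d_{C\lc{6}}(a,b)=d\lc{G}(a,b)$ for all $a,b\in V(C\lc{6})$; that is, $C\lc{6}$ is an isometric cycle of $G$.

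I do not expect a genuine obstacle here: the whole argument rests on the convexity of cubes in $G$ provided by \Cref{lem:trianglefree+K23-free->Q3-stuff}. The only mildly delicate point is the combinatorial bookkeeping inside $Q\lc{3}$ in the first assertion — identifying the two missing vertices as the antipodal pair and checking that each is adjacent to three vertices of $C\lc{6}$ that are pairwise at distance $2$ — but this is a routine consequence of the structure of $Q\lc{3}$ recalled via \Cref{lem:Q3-planar-maxDist}.
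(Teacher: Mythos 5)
Your proof is correct and takes essentially the same route as the paper's: both hinge on \Cref{lem:trianglefree+K23-free->Q3-stuff} making the cube a convex (hence isometric) subgraph of $G$, and the paper obtains the reverse inclusion $Q\lc{3}\subseteq\mc{H}(C\lc{6})$ by running the SPE machinery of \Cref{lem:charact-convexHull}, which is exactly the alternative you sketch in parentheses (your direct two-inclusion argument, and your distance-chaining for the second assertion, are if anything a little more transparent than the paper's wording). One minor imprecision: the counting argument $|V(Q\lc{3})|-|V(C\lc{6})|=2$ does not by itself show that the two missing vertices form an antipodal pair, since $Q\lc{3}$ also contains (non-isometric) $6$-cycles whose complement is an \emph{edge}; the correct justification is that $C\lc{6}$, being isometric in $G$ and contained in the isometric subgraph $Q\lc{3}$, is isometric in $Q\lc{3}$, and only the antipodal-complement $6$-cycles of $Q\lc{3}$ are isometric. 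Your argument survives regardless, because it only uses that each missing vertex has two neighbours on $C\lc{6}$ at distance $2$ in $Q\lc{3}$, and this holds for any vertex of the triangle-free cubic graph $Q\lc{3}$ with at most one neighbour off $C\lc{6}$.
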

\begin{proof}
	Let $G$ be chosen as in the statement and let $C\lc{6}$ be an arbitrary
	isometric cycle of $G$.  Since $G$ is \CQinfer there is a cube
	$Q\lc{3} \subseteq G$ such that $C\lc{6}\subseteq Q\lc{3}$.  Moreover,
	let $(G'\lc{1}, G'\lc{2}, G'\lc{3},\ldots,G'_m)$ be a SPE-sequence of
	$C\lc{6}$.  It is easy to verify that $Q\lc{3}\subseteq G'_m$.  Since $G$
	is $\{K\lc{3},K\lc{2,3}\}$-free and planar, we can apply
	\Cref{lem:trianglefree+K23-free->Q3-stuff} to conclude that
	$P^\star_G(v,w)\subseteq Q\lc{3}$ for every shortest path
	$P^\star_G(v,w)$ with $v,w \in V(Q\lc{3})$.  Hence,
	$Q\lc{3}\subseteq G'_m$ is a convex subgraph of $G$, and by construction
	of $(G'_1,\ldots,G'_m)$, we have $Q\lc{3}=G'_m$.  By
	\Cref{lem:charact-convexHull}, it follows that $Q\lc{3}$ is the convex
	hull of $C\lc{6}$ (w.r.t.\ $G$).
	
	Finally, let $C\lc{6}$ be an isometric cycle in a cube
	$Q\lc{3}\subseteq G$. Now, let $P^\star_G(v,w)$ be a shortest path with
	$v,w \in V(C\lc{6})\subseteq V(Q\lc{3})$. By the same arguments as above,
	$P^\star_G(v,w)\subseteq Q\lc{3}$ and $Q_3$ is a convex subgraph of
	$G$. Thus, every vertex (resp., edge) of $Q\lc{3}$ lies on some shortest
	path $P^\star_G(v,w)$ for all $v,w \in V(C\lc{6})$, we conclude that the
	convex hull $\mc{H}_G(C\lc{6})$ (w.r.t.\ $G$) is this $Q\lc{3}$. This,
	together with $C\lc{6}$ being an isometric cycle of that
	$Q\lc{3}\subseteq G$, implies $C\lc{6}$ is an isometric cycle in $G$.
\end{proof}

\begin{theorem} \label{thm:planar-mg<->iso-cycles} Let $G$ be a planar
	graph.  Then, $G$ is a median graph if and only if the following
	statements are satisfied:
	\begin{enumerate}[noitemsep,nolistsep]
		\item $G$ is connected, \label{item:conect1}
		\item $G$ is $K\lc{2,3}$-free, \label{item:K23-free}
		\item \CQinfer, and \label{item:C6-Q3-proper}
		\item every isometric cycle in $G$ has length $4$ or
		$6$. \label{item:iso-46}
	\end{enumerate}
\end{theorem}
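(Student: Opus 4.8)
The plan is to prove both directions of the equivalence. For the forward direction, assume $G$ is a planar median graph. Condition~\ref{item:conect1} is immediate from the definition of median graph. Condition~\ref{item:K23-free} follows from \cref{lem:basic-median-props}~(2). Condition~\ref{item:C6-Q3-proper} is precisely the content of \cref{lem:mg->Q3-Q3-infer} (the \CQinfer part). Condition~\ref{item:iso-46} is exactly \cref{lem:median->iso4-6}. So the forward direction is essentially a bookkeeping exercise assembling lemmas already established.

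The substance is the converse. Assume $G$ is a planar graph satisfying \ref{item:conect1}--\ref{item:iso-46}; I want to show $G$ is a median graph. By \cref{prop:convex-hull-charact}, it suffices to show that $G$ is connected (given) and that the convex hull of every isometric cycle of $G$ is a hypercube. By \ref{item:iso-46}, every isometric cycle has length $4$ or $6$. First I need to get from the hypotheses to the triangle-free condition: since $G$ is connected, $K\lc{2,3}$-free, and has only isometric cycles of length $4$ or $6$, it should follow that $G$ is bipartite — a triangle (or any odd cycle) would force a short isometric odd cycle, contradicting \ref{item:iso-46}. In particular $G$ is $K\lc{3}$-free, so $G$ is $\{K\lc{3},K\lc{2,3}\}$-free and planar, which is exactly the hypothesis package needed to invoke \cref{lem:C6+Q3->conv-hull} and \cref{lem:trianglefree+K23-free->Q3-stuff}. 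Now take an isometric cycle $C$ of $G$. If $|V(C)| = 6$, then since $G$ is $\{K\lc{3},K\lc{2,3}\}$-free, planar, and \CQinfer, \cref{lem:C6+Q3->conv-hull} gives that the convex hull of $C$ is a cube $Q\lc{3}$, which is a hypercube — done. If $|V(C)| = 4$, then $C$ is a square; since $G$ is $K\lc{3}$-free and $K\lc{2,3}$-free, \cref{lem:C4convex} says $C$ is convex, so its convex hull is $C$ itself, which is the hypercube $Q\lc{2}$ — done. Thus every isometric cycle has hypercube convex hull, and \cref{prop:convex-hull-charact} finishes the proof.

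The step I expect to be the main obstacle — or at least the one requiring the most care rather than pure citation — is establishing bipartiteness / triangle-freeness from conditions \ref{item:conect1}--\ref{item:iso-46}. One has to argue that if $G$ contained an odd cycle, then it would contain a \emph{shortest} odd cycle, and a shortest odd cycle is necessarily isometric (any chord or shortcut would produce a strictly shorter odd cycle, and a shortest cycle has no chords); such an isometric odd cycle has length $3$ or $\ge 5$, all forbidden by \ref{item:iso-46} (which only permits $4$ and $6$, both even). Actually the cleanest route: a shortest odd cycle is an isometric cycle of odd length, contradicting \ref{item:iso-46}; hence $G$ is bipartite, hence $K\lc{3}$-free. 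Everything else is a matter of correctly matching hypotheses to the already-proven lemmas, and checking the trivial case that the convex hull of a convex square is a $Q\lc{2}$.
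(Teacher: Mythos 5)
Your proof is correct and follows essentially the same route as the paper's: the forward direction assembles \cref{lem:basic-median-props}, \cref{lem:mg->Q3-Q3-infer} and \cref{lem:median->iso4-6}, and the converse shows the convex hull of each isometric cycle is a hypercube via \cref{lem:C4convex} (for $4$-cycles) and \cref{lem:C6+Q3->conv-hull} (for $6$-cycles), then invokes \cref{prop:convex-hull-charact}. The only cosmetic difference is at the step you flag as the main obstacle: the paper simply observes that a $K\lc{3}$ would itself be an isometric $3$-cycle, forbidden by condition~(4), whereas you take the slightly longer (but valid) detour through ``a shortest odd cycle is isometric'' to obtain full bipartiteness, which is more than is actually needed.
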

\begin{proof}
	Let $G$ be a planar graph.  First, assume that $G$ is a median graph.
	Then, by definition \Cref{item:conect1} is satisfied,
	\Cref{lem:basic-median-props}~(2) implies \Cref{item:K23-free},
	\Cref{lem:mg->Q3-Q3-infer} implies \Cref{item:C6-Q3-proper}, and
	\Cref{lem:median->iso4-6} implies \Cref{item:iso-46}.
	
	Conversely, assume that
	\Cref{item:conect1,item:K23-free,item:C6-Q3-proper,item:iso-46} are
	satisfied.  Now, let $C\subseteq G$ be an arbitrary isometric cycle.  By
	\Cref{item:iso-46}, this cycle $C$ is either a $C\lc{4}$ or $C\lc{6}$.
	Note that $G$ is $K\lc{3}$-free, since any $K\lc{3}$ would be an
	isometric cycle.  If $C=C\lc{4}$, then we can apply \Cref{lem:C4convex}
	to conclude that $C\lc{4}$ is convex in $G$, and thus, the convex hull
	$\mc{H}(C\lc{4})$ is precisely this $C\lc{4}\simeq Q_2$.  If $C=C\lc{6}$,
	then \Cref{lem:C6+Q3->conv-hull} implies that the convex hull of this
	$C\lc{6}$ (w.r.t.\ $G$) is a cube $Q\lc{3}$.  Hence, in either case, the
	convex hull of any isometric cycle of $G$ is a hypercube.  Thus,
	\cref{prop:convex-hull-charact} implies that $G$ is a median graph.
\end{proof}

\begin{corollary}\label{cor:Q3free-med}
	Let $G$ be a planar graph.  Then, $G$ is a cube-free median graph if and
	only if the following statements are satisfied:
	\begin{enumerate}[noitemsep,nolistsep]
		\item $G$ is connected, \label{item:conect2}
		\item $G$ is $K\lc{2,3}$-free, \label{item:K23-free2}
		\item every isometric cycle in $G$ has length $4$. \label{item:iso-4}
	\end{enumerate}
\end{corollary}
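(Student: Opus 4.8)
The plan is to reduce the statement to Theorem~\ref{thm:planar-mg<->iso-cycles}, using the observation that a planar median graph contains a cube if and only if it has an isometric $6$-cycle. In other words, conditions~\Cref{item:conect2}--\Cref{item:iso-4} are just conditions~\Cref{item:conect1}--\Cref{item:iso-46} of that theorem with ``\CQinfer'' and ``every isometric cycle has length $4$ or $6$'' replaced by the single, stronger demand that no isometric $6$-cycle occurs.

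For the forward implication I would argue as follows. Suppose $G$ is a cube-free median graph. Being a planar median graph, $G$ satisfies all four conditions of Theorem~\ref{thm:planar-mg<->iso-cycles}, which gives \Cref{item:conect2} and \Cref{item:K23-free2} immediately; for \Cref{item:iso-4} it only remains to exclude an isometric $C_6$. If $G$ had one, then, as $G$ is \CQinfer{} by \cref{lem:mg->Q3-Q3-infer}, this $C_6$ would lie in some cube $Q_3\subseteq G$ (equivalently, by \cref{prop:convex-hull-charact} its convex hull would be a hypercube, necessarily a $Q_3$ since $G$ is planar, cf.\ \cref{lem:planar-hypercube-3}), contradicting cube-freeness. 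Hence every isometric cycle of $G$ has length~$4$.

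For the converse, assume \Cref{item:conect2}--\Cref{item:iso-4} hold. Since $G$ then has no isometric $6$-cycle, the hypothesis ``\CQinfer'' of Theorem~\ref{thm:planar-mg<->iso-cycles} is vacuously true and every isometric cycle trivially has length $4$ or $6$; together with connectivity and $K_{2,3}$-freeness, Theorem~\ref{thm:planar-mg<->iso-cycles} yields that $G$ is a planar median graph. It remains to verify that $G$ is cube-free. Assume, for contradiction, that $Q_3\subseteq G$; the cube $Q_3$ contains an isometric $6$-cycle $C_6$ (for instance the one obtained by deleting two antipodal vertices, cf.\ \cref{lem:Q3-planar-maxDist}). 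Since $G$ is by now known to be planar, $\{K_3,K_{2,3}\}$-free and \CQinfer{} (cf.\ \cref{lem:basic-median-props,lem:mg->Q3-Q3-infer}), the ``moreover'' part of \cref{lem:C6+Q3->conv-hull} shows that $C_6$ is an isometric cycle of $G$, contradicting \Cref{item:iso-4}. Therefore $G$ has no cube, i.e.\ $G$ is a cube-free median graph.

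The whole derivation is essentially bookkeeping layered on top of Theorem~\ref{thm:planar-mg<->iso-cycles}; the only place that needs a genuine lemma rather than a direct observation is the claim that a cube sitting inside $G$ forces an isometric $6$-cycle in $G$, which is exactly the content of the ``moreover'' statement of \cref{lem:C6+Q3->conv-hull}. I therefore do not anticipate any real obstacle.
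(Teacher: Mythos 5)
Your proposal is correct and follows essentially the same route as the paper: both directions reduce to Theorem~\ref{thm:planar-mg<->iso-cycles}, with the forward direction excluding an isometric $C_6$ via the \CQinfer{} property and the converse using the ``moreover'' part of \cref{lem:C6+Q3->conv-hull} to show that a cube in $G$ would force an isometric $6$-cycle. Your version is slightly more explicit about verifying the hypotheses of \cref{lem:C6+Q3->conv-hull}, but the argument is the same.
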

\begin{proof}
	Let $G$ be a planar and cube-free median graph.  Then, by definition,
	\Cref{item:conect2} is satisfied, and \Cref{lem:basic-median-props}~(2)
	implies \Cref{item:K23-free2}.  Moreover, every isometric cycle in $G$
	has length $4$ or $6$ (cf.\ \cref{lem:median->iso4-6}).  However, if
	there is an isometric cycle of length $6$, then we can apply
	\cref{thm:planar-mg<->iso-cycles}~\ref{item:C6-Q3-proper}, to conclude
	that $G$ contains a cube $Q\lc{3}$, which is not possible by assumption.
	Hence, \Cref{item:iso-4} is satisfied.
	
	Conversely, assume that $G$ is a planar graph that satisfies
	\Cref{item:conect2,item:K23-free2,item:iso-4}.  Then, in particular, $G$
	satisfies the statements of \cref{thm:planar-mg<->iso-cycles}, which
	implies that $G$ is a median graph.  Now, assume for contradiction that
	$G$ contains a cube $Q\lc{3}$.  This $Q\lc{3}$ contains an isometric
	cycle $C\lc{6}$ w.r.t.\ $Q\lc{3}$.  By \Cref{lem:C6+Q3->conv-hull},
	$C\lc{6}$ is an isometric cycle in $G$; which is a contradiction to
	\Cref{item:iso-4}.  In summary, $G$ is a cube-free median graph.
\end{proof}

As a direct consequence of \Cref{lem:iso->in/out-C4} and since every square
in a median graph is isometric, we obtain
\begin{corollary}\label{lem:C4inGinGout}
	If $G$ is a median graph containing a square $C\subseteq G$, then all squares
	of $G$ are contained in the union of the squares contained in
	$\Gin{C,\pi}$ and $\Gout{C,\pi}$.
\end{corollary}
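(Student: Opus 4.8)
The plan is to reduce the statement directly to \Cref{lem:iso->in/out-C4}. First I would observe that, for the subgraphs $\Gin{C,\pi}$ and $\Gout{C,\pi}$ to be defined, the median graph $G$ is (implicitly) planar and comes with a fixed planar embedding $\pi$; moreover, by \cref{lem:basic-median-props} it is bipartite, hence $K\lc{3}$-free, and it is $K\lc{2,3}$-free. Thus $G$ satisfies the hypotheses of \cref{lem:iso->in/out-C4}.

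Next I would record the auxiliary fact that every square $C'\subseteq G$ is an isometric cycle of $G$. Indeed, adjacent vertices of $C'$ are at distance $1$ in both $C'$ and $G$, while two opposite vertices of $C'$ are at distance $2$ in $C'$ and, since $G$ is bipartite, at an even, nonzero distance in $G$ that is at most $2$, hence exactly $2$. So $d\lc{C'}=d\lc{G}$ on $V(C')$, i.e.\ $C'$ is isometric.

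Then, for an arbitrary square $C'\subseteq G$, I would apply \cref{lem:iso->in/out-C4} to the pair $C,C'$: either all vertices of $V(C')\setminus V(C)$ lie inside $C$, or they all lie outside $C$ (w.r.t.\ the $(G,\pi)$-induced embedding of $C$); the remaining possibility is $V(C')\setminus V(C)=\emptyset$, in which case $C'=C$ because a $K\lc{3}$-free graph has at most one $4$-cycle on a given $4$-element vertex set. In the first case every vertex of $C'$ is almost-inside $C$ (the vertices of $V(C')\cap V(C)$ lie on $C$ and are thus almost-inside), and since $G$ is $K\lc{3}$-free, $\Gin{C,\pi}$ is the subgraph induced by exactly the almost-inside vertices of $C$, so $C'\subseteq \Gin{C,\pi}$; symmetrically, in the second case $C'\subseteq \Gout{C,\pi}$; and if $C'=C$ then $C'$ is contained in both. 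In every case $C'$ appears among the squares contained in $\Gin{C,\pi}$ or in $\Gout{C,\pi}$, which is precisely the claim.

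I do not expect a genuine obstacle — this really is a corollary of \cref{lem:iso->in/out-C4}. The only step needing a little care is the passage from the ``inside/outside'' dichotomy for $V(C')\setminus V(C)$ to the containment of the whole cycle $C'$ in $\Gin{C,\pi}$ (resp.\ $\Gout{C,\pi}$): this uses both that the shared vertices $V(C')\cap V(C)$ are automatically almost-inside \emph{and} almost-outside $C$, and the $K\lc{3}$-freeness of $G$, which guarantees that $\Gin{C,\pi}$ and $\Gout{C,\pi}$ are vertex-induced subgraphs.
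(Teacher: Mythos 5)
Your proposal is correct and follows exactly the paper's route: the paper derives this corollary in one line as a direct consequence of \Cref{lem:iso->in/out-C4} together with the observation that every square in a median graph is isometric, which is precisely your argument. Your additional care about the shared vertices $V(C')\cap V(C)$ and the fact that $\Gin{C,\pi}$, $\Gout{C,\pi}$ are induced on the almost-inside/almost-outside vertices for $K\lc{3}$-free graphs just makes explicit what the paper leaves implicit.
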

In other words, if we have two graphs $\Gin{C,\pi}$ and $\Gout{C,\pi}$ such
that $G = \Gin{C,\pi}\cd{C}{C}\Gout{C,\pi}$ results in a median graph, then
the only squares in $G$ are the ones contained in $\Gin{C,\pi}$ and
$\Gout{C,\pi}$.  \cref{lem:C4inGinGout} can be generalized further to
show  that $\{K\lc{3},K\lc{2,3}\}$-free planar graphs $G$ can be
characterized in terms of their subgraphs $\Gin{C,\pi}$ and $\Gout{C,\pi}$.
\begin{lemma}\label{lem:K2-K23-free-charact}
	Let $G$ be a $\pi$-embedded planar graph and $C\subseteq G$ be a
	square. Then, $G$ is $\{K\lc{3},K\lc{2,3}\}$-free if and only if
	$\Gin{C,\pi}$ and $\Gout{C,\pi}$ are $\{K\lc{3},K\lc{2,3}\}$-free.
\end{lemma}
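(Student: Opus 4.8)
The ``only if'' direction is immediate, since $\Gin{C,\pi}$ and $\Gout{C,\pi}$ are subgraphs of $G$ and the class of $\{K\lc{3},K\lc{2,3}\}$-free graphs is closed under taking subgraphs. For the converse, the plan is to assume that $G$ contains a forbidden subgraph $H$ (a copy of $K\lc{3}$ or of $K\lc{2,3}$) and to exhibit a -- possibly different -- forbidden subgraph inside $\Gin{C,\pi}$ or inside $\Gout{C,\pi}$, contradicting the hypothesis. Three features of the $\pi$-embedded graph $G$ and the square $C$ will be used. \emph{(i)} The $4$-cycle $C$ is a simple closed curve in the plane, so every vertex of $G$ is strictly inside $C$, strictly outside $C$, or on $C$ (i.e.\ in $V(C)$), every edge of $G$ is drawn in the open region inside $C$, in the open region outside $C$, or is an edge of $C$, and no edge of $G$ joins a vertex strictly inside $C$ with one strictly outside $C$ (by the Jordan curve theorem). \emph{(ii)} By construction $G=\Gin{C,\pi}\cup\Gout{C,\pi}$ with $\Gin{C,\pi}\cap\Gout{C,\pi}=C$; moreover $\Gin{C,\pi}$ contains every vertex on or inside $C$ together with every edge of $C$ and every edge drawn inside $C$ (and symmetrically for $\Gout{C,\pi}$), so in particular every edge of $G$ incident with a vertex strictly inside $C$ belongs to $\Gin{C,\pi}$. \emph{(iii)} Since $C$ is a $4$-cycle, its only non-edges join opposite vertices; a chord of $C$ drawn inside (resp.\ outside) $C$ together with $C$ is a subgraph of $\Gin{C,\pi}$ (resp.\ $\Gout{C,\pi}$) that contains a $K\lc{3}$; and two opposite vertices of $C$ have exactly two common neighbours within $C$.

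Both $K\lc{3}$ and $K\lc{2,3}$ have no isolated vertices, so if $H$ is contained in neither $\Gin{C,\pi}$ nor $\Gout{C,\pi}$ then, by \emph{(i)}--\emph{(ii)}, either \emph{(a)} some edge of $H$ is a chord of $C$ drawn strictly inside or strictly outside $C$, or \emph{(b)} $H$ has a vertex $v$ strictly inside $C$ and a vertex $w$ strictly outside $C$. Case~\emph{(a)} already yields, via \emph{(iii)}, a $K\lc{3}$ inside $\Gin{C,\pi}$ or $\Gout{C,\pi}$, a contradiction. In case~\emph{(b)}, $v$ and $w$ are non-adjacent in $H$ (the edge $vw$ would have to cross $C$), which is impossible when $H=K\lc{3}$. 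When $H=K\lc{2,3}$ with parts $\{a,b\}$ and $\{c,d,e\}$, the non-adjacent pair $\{v,w\}$ is therefore $\{a,b\}$ or a two-element subset of $\{c,d,e\}$. If $\{v,w\}=\{a,b\}$, then the three edges of $H$ at $v$ (being incident with a vertex strictly inside $C$) are drawn inside $C$ and those at $w$ are drawn outside $C$, so $c,d,e$ are simultaneously almost-inside and almost-outside $C$, forcing $\{c,d,e\}\subseteq V(C)$; two of $c,d,e$, say $c$ and $d$, are opposite vertices of $C$, and then $c$ and $d$ have the three common neighbours $e$, the fourth vertex of $C$, and $v$ inside $\Gin{C,\pi}$, producing a $K\lc{2,3}$ there. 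If instead $\{v,w\}\subseteq\{c,d,e\}$, then the two edges of $H$ at $v$ are drawn inside $C$ and those at $w$ outside $C$, forcing $a,b\in V(C)$; if $a$ and $b$ are adjacent in $C$ then $\{a,b,v\}$ is a $K\lc{3}$ in $\Gin{C,\pi}$, and otherwise $a$ and $b$ are opposite in $C$ and, together with their two common neighbours in $C$ and with $v$, form a $K\lc{2,3}$ in $\Gin{C,\pi}$. Every branch contradicts the assumption that $\Gin{C,\pi}$ and $\Gout{C,\pi}$ are $\{K\lc{3},K\lc{2,3}\}$-free, so $G$ is $\{K\lc{3},K\lc{2,3}\}$-free.

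The delicate point is establishing the dichotomy \emph{(a)}/\emph{(b)} rigorously from the definition of $\Gin{C,\pi}$ and $\Gout{C,\pi}$ -- i.e.\ arguing that a forbidden subgraph not contained in a single side either runs one of its edges across $C$ (and hence has a vertex strictly inside and one strictly outside) or has an edge that is a chord of $C$ -- and then carrying out the short finite check for $K\lc{2,3}$. I expect the main care to go into tracking which vertices of $H$ lie on $C$ rather than strictly inside or outside; beyond counting common neighbours within the $4$-cycle $C$, nothing heavier is needed.
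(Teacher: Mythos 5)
Your proof is correct and follows the same overall strategy as the paper's: use planarity of the embedding to show that no edge joins a vertex strictly inside $C$ to one strictly outside, and then use the combinatorial structure of the square $C$ (opposite vertices, common neighbours) to relocate a forbidden subgraph entirely into $\Gin{C,\pi}$ or $\Gout{C,\pi}$. The one genuine difference is your explicit dichotomy \emph{(a)}/\emph{(b)}, and it is a real improvement in rigor: the paper's proof passes directly from ``$H$ is contained in neither $\Gin{C,\pi}$ nor $\Gout{C,\pi}$'' to ``$H$ has a vertex inside $C$ and a vertex outside $C$,'' which is not justified in general -- a copy of $K\lc{3}$ or $K\lc{2,3}$ can fail to lie on one side solely because one of its edges is a chord of $C$ joining opposite vertices of $C$ and drawn on the wrong side, with no vertex of $H$ strictly on that side. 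Your case \emph{(a)} handles exactly this situation (the chord together with two edges of $C$ yields a $K\lc{3}$ on the appropriate side), and your finite check for $K\lc{2,3}$ in case \emph{(b)}, organized by which non-adjacent pair of $H$ is the inside/outside pair, reaches the same contradictions the paper does via the common-neighbour count in $C$. So the argument is sound and, at the one delicate step you correctly identified, more complete than the published proof.
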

\begin{proof}
	Let $G$ be a $\pi$-embedded planar graph, and let $C\subseteq G$ be a
	square.  If $G$ is $\{K\lc{3},K\lc{2,3}\}$-free, then every subgraph of
	$G$ is $\{K\lc{3},K\lc{2,3}\}$-free, and thus, $\Gin{C,\pi}$ and
	$\Gout{C,\pi}$ must be $\{K\lc{3},K\lc{2,3}\}$-free as well.
	
	The terms ``inside'' and ``outside'' in the following refer to the
	$(G,\pi)$-induced embedding of $C$. Consider two vertices $v,w \in V(G)$,
	where $v$ is outside of $C$ and $w$ is inside of $C$ (and thus, in
	particular, $v,w\notin V(C)$). We observe that $v$ and $w$ cannot be
	adjacent in $G$, since in the planar embedding $\pi$ an edge $\{v,w\}$
	would cross edges or vertices of $C$.
	
	Now, suppose that $\Gin{C,\pi}$ and $\Gout{C,\pi}$ are
	$\{K\lc{3},K\lc{2,3}\}$-free. Assume first, for contradiction, that $G$
	contains a subgraph $H\simeq K\lc{3}$. This subgraph can neither be
	located entirely in $\Gin{C,\pi}$ nor in $\Gout{C,\pi}$.  Hence, there
	are vertices $v,w\in V(H)$ such that $v$ is outside of $C$ and $w$ is
	inside of $C$. Since $H\simeq K_3$ it holds that
	$\{v,w\} \in E(H)\subseteq E(G)$; a contradiction.  Hence, $G$ must be
	$K\lc{3}$-free.
	
	Assume now, for contradiction, that $G$ contains a subgraph
	$H\simeq K\lc{2,3}$. Again, this subgraph can neither be located entirely
	in $\Gin{C,\pi}$ nor in $\Gout{C,\pi}$.  Hence, there are vertices
	$v,w\in V(H)$ such that $v$ is outside of $C$ and $w$ is inside of
	$C$. Since, as argued above, $\{v,w\} \in E(H)$ is not possible, we can
	conclude that $d\lc{H}(v,w)=2$, and thus, there must be (at least) two
	distinct paths $P\lc{H}(v,w)$ and $P'\lc{H}(v,w)$ of length $2$.  Let
	$V(P\lc{H}(v,w))=\{v,a,w\}$ and $V(P'\lc{H}(v,w))=\{v,b,w\}$.  Since
	$\{v,w\} \notin E(G)$ and $v$ is outside while $w$ is inside of that $C$,
	the only vertices that can be adjacent to $v$ and $w$ are vertices of
	$C$.  Hence, $a,b \in V(C)$. Note $d\lc{H}(v,w)=2$, and since $G$ is
	$K\lc{3}$-free, we can conclude that $d\lc{G}(a,b)=2$.  But then, the
	subgraph of $\Gout{C,\pi}$ induced by $V(C)\cup \{v\}$ contains a
	subgraph isomorphic to $K\lc{2,3}$; a contradiction.  Hence, $G$ is
	$\{K\lc{3},K\lc{2,3}\}$-free.
\end{proof}

Recall that a connected graph is either cyclic or a tree.  We are now
in the position to provide an additional characterization of planar 
graphs that contain squares and are median graphs.
\begin{theorem} \label{thm:if-inoutside->mg<->pmg} Let $G$ be a
	$\pi$-embedded planar graph that contains a square $C\subseteq G$.
	Then, $G$ is a median graph if and only if $\Gin{C,\pi}$ and $\Gout{C,\pi}$
	are median graphs.
\end{theorem}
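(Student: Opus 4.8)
**The plan is to prove the two directions separately, using the characterization from Theorem~\ref{thm:planar-mg<->iso-cycles} together with the structural lemmas on $\Gin{C,\pi}$ and $\Gout{C,\pi}$.** For the forward direction, assume $G$ is a (planar) median graph containing a square $C$. Since every square in a median graph is isometric, $C$ is isometric, hence convex by \cref{prop:convex-hull-charact} (its convex hull is $Q_2=C$ itself). By \cref{lem:basic-median-props} $G$ is $K_{2,3}$-free, and it is $K_3$-free since any triangle would be an isometric odd cycle — so \cref{lem:in-out-C4} applies and tells us $\Gin{C,\pi}$ and $\Gout{C,\pi}$ are convex subgraphs of $G$. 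By \cref{lem:basic-median-props}~(4b) every convex subgraph of a median graph is a median graph, so both $\Gin{C,\pi}$ and $\Gout{C,\pi}$ are median graphs, and they are planar as subgraphs of a planar graph. That finishes the easy direction.

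For the converse, assume $\Gin{C,\pi}$ and $\Gout{C,\pi}$ are (planar) median graphs with $G=\Gin{C,\pi}\cup\Gout{C,\pi}$ and $\Gin{C,\pi}\cap\Gout{C,\pi}=C$. The strategy is to verify the four conditions of \cref{thm:planar-mg<->iso-cycles} for $G$. Condition~(2), $K_{2,3}$-freeness (and $K_3$-freeness), follows immediately from \cref{lem:K2-K23-free-charact}, since both pieces are median hence $\{K_3,K_{2,3}\}$-free by \cref{lem:basic-median-props}. Connectedness, condition~(1), is clear: both pieces are connected and share the vertices of $C$. The substantive work is conditions~(3) and~(4), which amount to controlling isometric cycles of $G$ — and here the key preliminary step is the \emph{distance lemma}: I would show that for $v,w$ both almost-inside (resp.\ both almost-outside) of $C$, a shortest path in the corresponding piece is a shortest path in $G$, i.e.\ distances are preserved. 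This is where the convexity from \cref{lem:in-out-C4} is crucial (it gives $\{K_3,K_{2,3}\}$-freeness of $G$ via \cref{lem:K2-K23-free-charact}, then the argument of that lemma), but one must also handle $v$ almost-inside, $w$ almost-outside: any $v$–$w$ path must cross $C$, so it decomposes through two vertices of $C$, and since $C$ is isometric in each piece (it is a square, hence isometric in a median graph) the crossing is ``efficient'', and $d_G(v,w)=d_{\Gin{C,\pi}}(v,a)+d_C(a,b)+d_{\Gout{C,\pi}}(b,w)$ for suitable $a,b\in V(C)$.

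Granting the distance lemma, condition~(4) follows: let $C'$ be an isometric cycle of $G$. By \cref{lem:iso->in/out-C4} all of $V(C')\setminus V(C)$ lies inside or all lies outside $C$, so $C'\subseteq\Gin{C,\pi}$ or $C'\subseteq\Gout{C,\pi}$; and since that piece is isometric in $G$ (by the distance lemma applied to pairs of same-side vertices), $C'$ is an isometric cycle of the piece, hence has length $4$ or $6$ by \cref{lem:median->iso4-6}. For condition~(3), \CQinfer: given an isometric $C_6$ in $G$, by the previous sentence it lies inside one piece, say $\Gin{C,\pi}$, and is isometric there; since $\Gin{C,\pi}$ is a median graph, \cref{lem:mg->Q3-Q3-infer} gives a cube $Q_3\subseteq\Gin{C,\pi}\subseteq G$ containing it. Then \cref{thm:planar-mg<->iso-cycles} yields that $G$ is a median graph.

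**The main obstacle is the distance lemma**, specifically the mixed case where $v$ is almost-inside and $w$ almost-outside $C$: one needs to argue that an optimal $v$–$w$ path can be rerouted to pass through $C$ in a way that is simultaneously optimal within each piece and within $C$, using that $C$ is convex in $G$ (so the sub-path between its two crossing vertices stays in $C$) and that distances within $C$ coincide with distances in each piece (square is isometric in a median graph). Once distance-preservation is in hand — i.e.\ $\Gin{C,\pi}$ and $\Gout{C,\pi}$ are isometric subgraphs of $G$ — the isometric-cycle bookkeeping for conditions~(3) and~(4) is routine via \cref{lem:iso->in/out-C4}, \cref{lem:median->iso4-6}, and \cref{lem:mg->Q3-Q3-infer}.
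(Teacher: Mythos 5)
Your proof is correct, and the converse direction takes a genuinely different route from the paper's. The paper proves both directions directly via \cref{prop:convex-hull-charact}: for the forward direction it takes an isometric cycle $C'$ of $\Gin{C,\pi}$, uses \cref{lem:in-out-C4-hull} to show its convex hull w.r.t.\ $G$ stays inside $\Gin{C,\pi}$ and hence coincides with the convex hull w.r.t.\ $\Gin{C,\pi}$, and concludes that hull is a hypercube; for the converse it runs the same transfer of convex hulls in reverse. You instead route the forward direction through convexity of the pieces (\cref{lem:in-out-C4}) plus \cref{lem:basic-median-props}~(4b), which is arguably shorter than the paper's argument, and you route the converse through the four conditions of \cref{thm:planar-mg<->iso-cycles}; since that theorem is proved independently beforehand, there is no circularity, and the bookkeeping via \cref{lem:iso->in/out-C4}, \cref{lem:median->iso4-6} and \cref{lem:mg->Q3-Q3-infer} goes through as you describe. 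What your approach buys is a proof that leans on the already-packaged characterization; what the paper's buys is independence from \cref{thm:planar-mg<->iso-cycles} and symmetry between the two directions.

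One remark: the ``main obstacle'' you identify --- the mixed case of the distance lemma, with $v$ almost-inside and $w$ almost-outside of $C$ --- is not actually needed anywhere in your argument. All you use is that each piece is an isometric subgraph of $G$, i.e.\ distance preservation for \emph{same-side} pairs, and that is immediate: once \cref{lem:K2-K23-free-charact} gives $\{K\lc{3},K\lc{2,3}\}$-freeness of $G$, \cref{lem:in-out-C4} says $\Gin{C,\pi}$ and $\Gout{C,\pi}$ are convex, hence isometric, in $G$. No rerouting of paths across $C$ is required, so the obstacle dissolves and your proof closes without further work.
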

\begin{proof}
	Let $G$ be a $\pi$-embedded planar graph, and let $C$ be a
	square of $G$.  By construction, $C\subseteq \Gin{C,\pi}$ and
	$C\subseteq \Gout{C,\pi}$.
	
	First, assume that $G$ is a median graph. Moreover, let $C'$ be an
	isometric cycle of $\Gin{C,\pi}$, and let $\mc{H}\lc{G}(C')$ be its
	convex hull w.r.t.\ $G$. Hence, \Cref{lem:in-out-C4-hull} implies that
	every $v' \in V(\mc{H}\lc{G}(C))$ lies almost-inside of this $C$ w.r.t.\
	the $(G,\pi)$-induced embedding of $G$. Thus, by definition of
	$\Gin{C,\pi}$, we conclude that $\mc{H}\lc{G}(C')\subseteq
	\Gin{C,\pi}$. Hence, since $C'\subseteq \Gin{C,\pi} \subseteq G$, we
	conclude that $\mc{H}\lc{G}(C')$ is also the convex hull of $C'$ w.r.t.\
	$\Gin{C,\pi}$. Since $G$ is a median graph,
	\cref{prop:convex-hull-charact} implies that $\mc{H}\lc{G}(C')$ is a
	hypercube. Thus, the convex hull of an arbitrary isometric cycle $C'$ in
	$\Gin{C,\pi}$ is a hypercube. Hence, \cref{prop:convex-hull-charact}
	implies that $\Gin{C,\pi}$ is a median graph. Analogously, one can show
	that $\Gout{C,\pi}$ is a median graph as well.
	
	Conversely, assume that $\Gin{C,\pi}$ and $\Gout{C,\pi}$ are median
	graphs.  \cref{lem:basic-median-props}~(1) implies that $\Gin{C,\pi}$ and
	$\Gout{C,\pi}$ are bipartite, and thus, they are $K_3$-free. This,
	together with \cref{lem:basic-median-props}~(2), implies that
	$\Gin{C,\pi}$ and $\Gout{C,\pi}$ are $\{K\lc{3},K\lc{2,3}\}$-free. By
	\Cref{lem:K2-K23-free-charact}, $G$ is $\{K\lc{3},K\lc{2,3}\}$-free. Now,
	let $C'$ be an isometric cycle of $G$. Hence, \Cref{lem:iso->in/out-C4}
	implies that all vertices $v \in V(C')$ lie almost-inside (resp.\ 
	almost-outside) of this $C$. First, assume that all vertices $v \in V(C')$
	lie almost-inside of the $C$. By definition of $\Gin{C,\pi}$ and by
	\Cref{lem:in-out-C4-hull}, we conclude that the convex hull
	$\mc{H}\lc{G}(C')$ of $C'$ w.r.t.\ $G$ is equal to the convex hull
	$\mc{H}\lc{\Gin{C,\pi}}(C')$ of $C'$ w.r.t.\ $\Gin{C,\pi}$. Since
	$\Gin{C,\pi}$ is a median graph, we conclude that
	$\mc{H}\lc{\Gin{C,\pi}}(C')$ is a hypercube. Thus, $\mc{H}\lc{G}(C')$ is
	a hypercube. Analogously, one can show that the convex hull
	$\mc{H}\lc{G}(C')$ of $C'$ w.r.t.\ $G$ is a hypercube if all vertices
	$v' \in V(C')$ are almost-outside of this $C$. Hence, in either case, the
	convex hull of any isometric cycle $C'$ of $G$ is a hypercube. Thus,
	\cref{prop:convex-hull-charact} implies that $G$ is a median graph.
\end{proof}

\section{Cubesquare-Graphs}
\label{sec:QS}

In this section, we establish a further characterization of planar median
graphs.  To this end, we provide a definition of an operator $\circledast$
to ``glue'' two graphs together.  This definition is motivated in part by 
\Cref{thm:if-inoutside->mg<->pmg}.
\begin{definition} \label{def:CD} Let $G$ and $H$ be two vertex-disjoint
	graphs with squares $C\subseteq G$ and $C'\subseteq H$. Let $\varphi$ be
	any isomorphism between the squares $C$ and $C'$.  Then, the composition
	$G\cd{C}{C'}H$ is obtained from $G$ and $H$ by identifying the vertices
	and edges of $C$ with their $\varphi$-images in $C'\subseteq H$.
\end{definition}
We will omit the explicit reference to $C$ and $C'$ in $\cd{C}{C'}$
whenever it is not needed. Note that $\circledast$ is not defined for graphs
that do not contain squares. Since the choice of
$\varphi$ will not play a role here, we suppress it in our
notation. 
\cref{fig:defCD} gives an illustrative example
of \cref{def:CD}.  There are eight different ways to define an isomorphism
on squares. Therefore, there are up to eight non-isomorphic graphs
$G\cd{C}{C'}H$ obtained by gluing together $G$ and $H$ at the same squares
with the help of different isomorphisms $\varphi$. 

\begin{figure}[t]
	\begin{center}
		\includegraphics[width=0.85\textwidth]{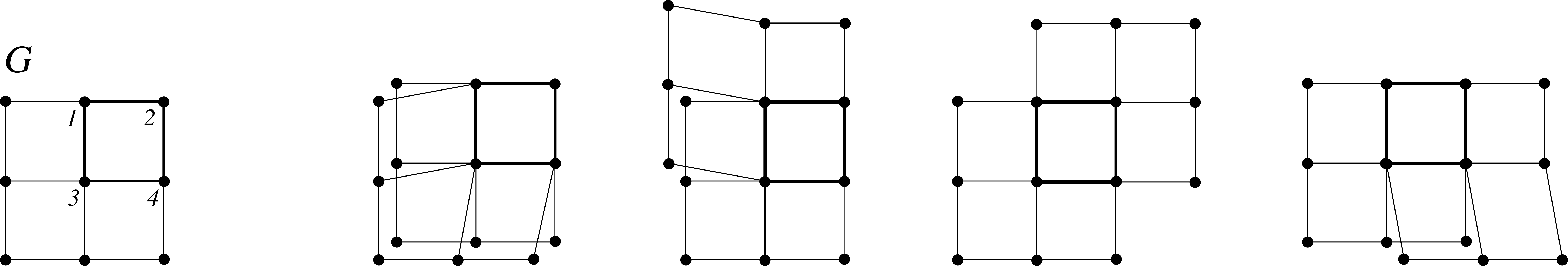}
	\end{center}
	\caption{There are three non-isomorphic graphs $G\cd{C}{C}G$ obtained by
		gluing together $G$ with a copy of itself at the square $C$. Shown are
		the four rotations of $C$ relative to its copy. By symmetry, the four
		rotations of the mirror image, i.e., mapping the vertex order
		$(1,3,4,2)$ to the order $(1,2,4,3)$ in the other copy, yields the same
		four configurations. Furthermore, the $2$nd and the $4$th case result
		in isomorphic graphs. Note that all graphs $G\cd{C}{C}G$ are planar. A
		generic planar embedding of the second case is provided by the drawing
		of the graph $G(2)$ in \cref{fig:exam-cubesquare}. However, only for
		the $3$rd case graph, the square $C$ remains a square-boundary and, in
		particular, the resulting graph is a square-graph.}
	\label{fig:defCD}
\end{figure}

It is easy to see that the square $C_4$ serves as a unique ``unit'' element,
that is, $G\circledast C_4 \simeq C_4\circledast G\simeq G$.  Moreover, the
operator~$\circledast$ is commutative, i.e.\
$G\cd{C}{C'} H \simeq H\cd{C'}{C} G$ for all graphs $G$ and $H$. 
However, it is not associative, since
$(G\lc{1}\cd{C}{C'} G\lc{2}) \cd{C''}{C'''} G\lc{3}$ can be well-defined,
but $G\lc{1}\cd{C}{C'} (G\lc{2} \cd{C''}{C'''} G\lc{3})$ is not; a case
that in particular happens when the square $C''$ is part of $G_1$ but not
of $G_2$; see \cref{fig:exam-cubesquare} for an example.

We will use the convention that $\circledast$-composition is read from left
to right, i.e.,
\begin{equation}
G\lc{1}\circledast G\lc{2}	\circledast G_3 \circledast \cdots
\circledast G_\ell \coloneqq
(\dots((G\lc{1}\circledast G\lc{2})	\circledast G_3) \circledast \cdots
\circledast G_{\ell-1}) \circledast G_\ell
\end{equation}
Setting
$G(i)\coloneqq G\lc{1}\circledast G\lc{2} \circledast G_3 \circledast
\cdots \circledast G_i$, we therefore have
$G(i+1) = G(i) \cd{C}{C'} G_{i+1}$, where $C$ is a square in $G(i)$ and
$C'$ is a square in $G_{i+1}$.
Note that, by
\cref{def:CD}, $G = \Gin{C,\pi}\cd{C}{C} \Gout{C,\pi}$.

In the following, we will consider the class of cubesquare-graphs as
defined below.  As we shall see later, a planar median graph is either a
tree or a cubesquare-graph.

\begin{definition} \label{def:QS-graphs} A \emph{cubesquare-graph} (or
	\emph{QS-graph} for short) is defined as follows:
	\begin{enumerate}
		\item[\textbf{(Q1)}] Every cube $Q\lc{3}$ and every cyclic square-graph
		is a QS-graph, called \emph{basic} QS-graph.
		\item[\textbf{(Q2)}] The ordered composition $G(\ell)$ of basic QS-graphs
		$G_i$, $1\le i\le \ell$ is a QS-graph, where $G(\ell)$ is defined
		recursively as $G(1)=G_1$ and $G(i)=G(i-1)\cd{C_{i-1}}{C_i} G_{i}$,
		$2\le i\le\ell$ using \emph{square-boundaries} $C_{i-1}$ in $G(i-1)$
		and $C_{i}$ in $G_{i}$.
	\end{enumerate}
\end{definition}
In other words, every QS-graph can be obtained from a cube or a
square-graph by iteratively replacing boundaries (w.r.t.\ some embedding)
that are $4$-cycles by cubes or square-graphs.  We emphasize that, in
contrast to \cref{def:CD}, the squares chosen in the construction of
QS-graphs are not arbitrary but must be square-boundaries for some planar
embedding in each iteration.  We shall see below that this construction is
always possible since each partial composition $G(i)$ and each basic
QS-graph contains a square-boundary.  An illustrative example of QS-graphs
is given in \cref{fig:exam-cubesquare}.

\begin{figure}[t]
	\begin{center}
		\includegraphics[width=0.7\textwidth]{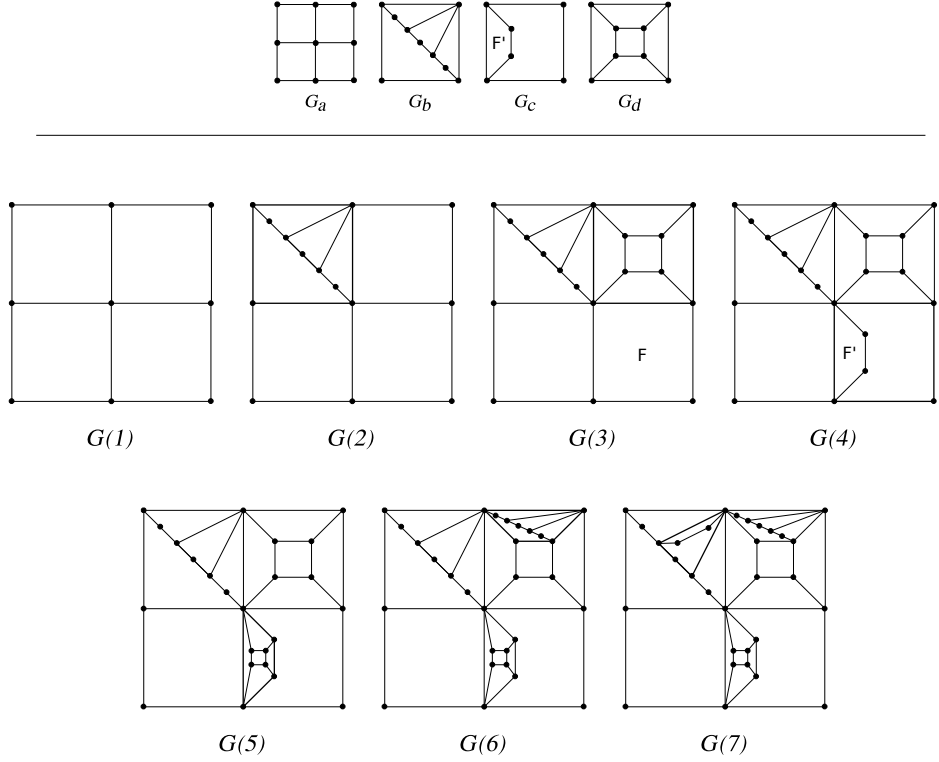}
	\end{center}
	\caption{In the upper part, four graphs $G_a,G_b,G_c$ and $G_d$ are
		shown.  Here, $G_a\simeq G_b$ is a square-graph and
		$G_d$ is a cube. Moreover, $G_c$ is a square-graph since there is an
		alternative planar embedding $\sigma$ such that all vertices become
		outer vertices, and thus, that each inner face is bounded by a square.
		By \cref{def:QS-graphs}, $G_a$ to $G_d$ are (basic) QS-graphs.
		Moreover, $G(1)=G_a$, and thus, $G(1)$ is a QS-graph.  We have
		$G(7) = (((((G_a \circledast G_b)\circledast G_d)\circledast
		G_c)\circledast G_d)\circledast G_b)\circledast G_c$.  Hence, every
		$G(i)$ with $i \in \{2,\ldots,7\}$ is precisely the graph
		$G(i-1) \circledast G_x$, where $G_x$ is the appropriate QS-graph with
		$x\in \{b,c,d\}$ together with its shown planar embedding.  Therefore,
		every $G(i)$ is a QS-graph. \newline This example also shows that
		$\circledast$ is not associative, in general.  To see this, consider
		the QS-graph $G(3) = (G_a \cd{C_a}{C_b} G_b)\cd{C'_a}{C_d} G_d$ where
		$C_a$ and $C'_a$ are the two squares in $G_a$ that are ``identified''
		with the squares $C_b$ and $C_d$ that form the outer boundary in $G_b$
		and $G_d$, respectively.  Hence, we cannot write $G(3)$ as
		$G_a \cd{C_a}{C_b} (G_b\cd{C'_a}{C_d} G_d)$ since the square $C'_a$
		does not exist in $G_b$.  In some cases, however,
		associativity is given. By way of example, consider
		$G(5) = (G(3)\cd{C}{C_c} G_c) \cd{C_c}{C_d} G_d$, where $C$ is the
		square in $G(3)$ that bounds $F$, $C_c$ the square that bounds $F'$ and
		$C_d$ the outer-boundary of $G_d$.  It is easy to see that
		$G(5) \simeq G(3)\cd{C}{C_c} (G_c \cd{C_c}{C_d} G_d)$, since the these
		constructions ``overlap'' on the cycle $C_c$. }
	\label{fig:exam-cubesquare}
\end{figure}

In the following, we will make frequent use of the planar embeddings $\rho$
and $\rho_C$ of cubes (cf.\ \cref{obs:squaregraph-cube-embedding}) and the
embeddings $\sigma$ and $\sigma_C$ of square-graphs (cf.\ \cref{def:s-g}
and \cref{obs:squaregraph-cube-embedding}).
\begin{lemma}\label{lem:QS-wellDef}
	QS-graphs are well-defined and planar graphs that satisfy $1$-FS.
\end{lemma}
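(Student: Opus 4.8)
The plan is to proceed by induction on the number $\ell$ of basic QS-graphs occurring in the ordered composition $G(\ell)$, establishing at once that $G(\ell)$ is well-defined, planar, and satisfies $1$-FS. Note that ``well-defined'' and ``$1$-FS'' are essentially the same requirement: by \cref{obs:cycle-bound} a planar graph satisfies $1$-FS exactly if it has a square-boundary, and the $i$-th gluing step in \cref{def:QS-graphs}~(Q2) is possible precisely when $G(i-1)$ has a square-boundary $C_{i-1}$ (a square-boundary $C_i$ of the basic graph $G_i$ is guaranteed by \cref{obs:Cube-Embedding} respectively \cref{lem:square-faces-squaregraph}). So it suffices to prove planarity and $1$-FS for every partial composition. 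To make the inductive step work, I expect one needs a strengthened hypothesis --- for instance that $G(\ell)$ is additionally \emph{either} a cyclic square-graph \emph{or} admits, for each of its square-boundaries $C$, a planar embedding with $C$ as outer boundary that has at least one \emph{inner} face bounded by a square.

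For the base case $\ell=1$, $G(1)=G_1$ is a cube or a cyclic square-graph; by \cref{lem:square-graph:basic-proper} both are planar, a cube satisfies $5$-FS (hence $1$-FS), and a cyclic square-graph satisfies $1$-FS by \cref{lem:square-faces-squaregraph}. In particular $G(1)$ has a square-boundary, so the first gluing is well-defined.

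For the inductive step, assume the claim for $G(i-1)$; then $G(i-1)$ has a square-boundary and $G(i)=G(i-1)\cd{C_{i-1}}{C_i}G_i$ is well-defined. To exhibit a planar embedding I would, using \cref{obs:cycle-bound} together with \cref{obs:Cube-Embedding} or \cref{lem:square-faces-squaregraph}, embed one of the two graphs with its distinguished square as an inner boundary bounding a face $F$ and the other with its distinguished square as the outer boundary, and then place the latter inside $F$, identifying the two squares as in \cref{def:CD}; this yields a planar embedding of $G(i)$ whose faces are the faces (other than $F$) of the first graph together with the inner faces of the second. For $1$-FS I would then split into cases on $G_i$, always nesting $G(i-1)$ inside $G_i$ when this helps: if $G_i$ is a cube, the five faces of $G_i$ other than the glued one are squares (\cref{obs:Cube-Embedding}) and survive, so $G(i)$ satisfies $5$-FS; if $G_i$ is a cyclic square-graph with at least two squares, \cref{lem:square-faces-squaregraph} and \cref{obs:squaregraph-cube-embedding} provide a second square-bounded face of $G_i$ that survives; and if $G_i\simeq C_4$, then $G(i)\simeq G(i-1)$ (as $C_4$ is a unit for $\circledast$) and $1$-FS follows from the induction hypothesis.

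The remaining case --- $G_i$ a cyclic square-graph whose only square is $C_i$ and which carries a non-empty attached forest, so that $G(i)$ is just $G(i-1)$ with a forest glued along the vertices of $C_{i-1}$ --- is the crux. If $G(i-1)$ is itself a cyclic square-graph, then so is $G(i)$ (attaching a forest to a square-graph along a face-bounding square is again a square-graph), and $1$-FS follows from \cref{lem:square-faces-squaregraph}. Otherwise the strengthened hypothesis supplies an embedding of $G(i-1)$ with $C_{i-1}$ as outer boundary and some square-bounded inner face; attaching the forest into the outer face preserves that inner face, so $G(i)$ still satisfies $1$-FS, and one checks that the strengthened invariant is inherited by $G(i)$ in every case. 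I expect this last case to be the main obstacle: since $G(i-1)$ need not be $3$-connected one cannot appeal to \cref{thm:3c-p} to control which faces persist under re-embedding, so the strengthened invariant must be formulated carefully enough that after the gluing there is always a square-bounded face that the freshly attached forest does not destroy.
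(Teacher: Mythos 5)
Your induction, the nesting construction for planarity (embed $G_i$ with $C_i$ as outer boundary inside the face of $G(i-1)$ bounded by $C_{i-1}$), and the cases where $G_i$ is a cube or a cyclic square-graph with at least two squares all coincide with the paper's argument. The problem is exactly the case you flag as the crux: $G_i$ a square with a non-empty forest attached. There you introduce a strengthened induction hypothesis (``for each square-boundary $C$ of $G(\ell)$ there is an embedding with $C$ as outer boundary having an inner square face'') and then write that ``one checks that the strengthened invariant is inherited by $G(i)$ in every case.'' That check is the entire difficulty and is not supplied: maintaining the invariant requires controlling the set of \emph{all} square-boundaries of $G(i)$ (a set that changes under gluing --- the identified square need not remain a square-boundary, cf.\ \cref{fig:defCD}) and, as you note yourself, $G(i)$ is not $3$-connected in general, so you cannot appeal to \cref{thm:3c-p} to see which faces persist under re-embedding. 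As it stands the proposal proves the lemma only modulo an unverified claim that is at least as hard as the lemma itself.

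The paper resolves this case with a purely local observation that makes any strengthened invariant unnecessary: if $G_i$ is a square $C_i$ with trees attached, then in the composed embedding the identified square $\tilde C$ together with these trees bounds an inner face, and each attached tree hangs off a \emph{single} vertex $v$ of $\tilde C$. Since $v$ lies on the cycle $C_{i-1}$ in $G(i-1)$, it has degree at least two there and is therefore incident with at least one further face of $G(i)\coloneqq G(i-1)\cd{C_{i-1}}{C_i}G_i$; each tree can be redrawn inside such a face. After this relocation $\tilde C$ itself bounds an inner face, so $G(i)$ satisfies $1$-FS. In other words, instead of hunting for a surviving square face elsewhere in the graph, one manufactures one at the gluing site. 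If you replace your strengthened-invariant step by this relocation argument, the rest of your proof goes through essentially as in the paper. (Your side remark that attaching a forest to a cyclic square-graph along a square-boundary yields a square-graph is also correct, but it is only proved easily via the embedding with that square as outer boundary --- which is again the same relocation idea --- and it does not cover the case where $G(i-1)$ is not a square-graph.)
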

\begin{proof}
	Recall that for $G = G_1\cd{C}{C}G_2$, the square $C$ must be a
	square-boundary in $G_1$ and $G_2$ but not necessarily in $G$, see
	\cref{fig:defCD}.  In order to show that QS-graphs are well-defined, we
	must, in particular, show that in each step of creating a new QS-graph at
	least one square-boundary remains which allows us to add another QS-graph
	(cf.\ \textbf{(Q2)}).  Hence, we must show that every QS-graph satisfies
	$1$-FS.  This, in particular, implies that QS-graphs must be planar.
	
	Let us first consider basic QS-graphs. By
	\Cref{lem:square-faces-squaregraph} and \Cref{obs:Cube-Embedding}, every
	square-graph and cube satisfies $1$-FS.  In particular, if a basic
	QS-graph contains at least two squares, then \Cref{obs:Cube-Embedding}
	and \Cref{lem:square-faces-squaregraph} imply that it must satisfy
	$2$-FS.  Thus, every basic QS-graphs satisfies $1$-FS and every basic
	QS-graph containing at least two squares satisfies $2$-FS.
	
	We proceed now by induction on the number $\ell$ of factors to show that
	the ordered composition $\circledast$ of $\ell$ basic QS-graphs is
	well-defined and satisfies $1$-FS which, in particular, implies that we
	obtain a planar QS-graph. The base case are the basic QS-graphs. Assume
	that $G\lc{1}\circledast G\lc{2} \circledast \cdots \circledast G_i$
	is well-defined and results in a planar graph that satisfies $1$-FS for
	all $1\leq i<k$. Consider now a product of $k$ basic QS-graphs
	$G\lc{1}\circledast G\lc{2} \circledast \cdots \circledast G_k$. Set
	$H\coloneqq G(k-1)$ and $H'\coloneqq G_k$. We show first,
	\begin{owndesc}
		\item[\textnormal{\emph{Claim 1:}}] 
		$H\cd{C}{C'} H'$ is well-defined and a planar graph.
		
		\emph{Proof of Claim 1.}  By assumption, $H$ and $H'$ satisfy
		$1$-FS. Let $C$ be a square-boundary of $H$ and $C'$ a square-boundary
		of $H'$ w.r.t.\ some planar embedding of $H$, resp., $H'$.  Now, we can
		use the embeddings $\pi_{C'}\in \{\sigma_{C'},\rho_{C'}\}$ for $H'$
		depending on whether $H'$ is a cube or a cyclic square-graph such that
		the square $C'$ is the outer boundary of $H'$.  Since $C$ is a
		square-boundary in $H$, there is a planar embedding $\pi_C$ of $H$ such
		that $C$ is an inner boundary w.r.t.\ $\pi_C$ (cf.\
		\cref{obs:cycle-bound}).  The outer boundary $C'$ of $H'$ intersects
		$H\subseteq H\cd{C}{C'} H'$ only in the $4$ vertices of the chosen
		square $C$ by definition of $H\cd{C}{C'} H'$.  Now, consider the
		embedding $\kappa(\pi_{C},\pi_{C'})$ of $H\cd{C}{C'} H'$.  It consists
		of the drawing of $H\cd{C}{C'} H'$ based on $\pi\lc{C}$ together with
		the ``scaled'' planar drawing $\pi\lc{C'}$ such that $H'$ intersects
		$H$ only in the vertices contained in $C$ and the remaining vertices of
		$H'$ are placed inside of the inner face of $H$ bounded by $C$. Thus it
		yields a planar embedding of $H\cd{C}{C'} H'$.  In summary,
		$H\cd{C}{C'} H'$ is well-defined and results in a planar graph.  \hfill
		$\diamond$
	\end{owndesc}

	\begin{owndesc}
		\item[\textnormal{\emph{Claim 2:}}] $H\cd{C}{C'} H'$ satisfies $1$-FS.
		
		\emph{Proof of Claim 2.}  We will make frequent
		use of the planar embeddings $\pi_{C}$ of $H$, $\pi_{C'}$ of $H'$ and
		$\kappa(\pi_{C},\pi_{C'})$ of $H\cd{C}{C'} H'$ as specified in the
		proof of Claim 1.
		
		First, assume that $H'$ contains only one square. Since $H'$ is a basic
		QS-graph, it must therefore be square-graph and thus, $H'$ is
		isomorphic to a $C_4$ to which possibly a couple of trees are
		attached. Let $\tilde C$ be the square in $H\cd{C}{C'}H'$ that refers to
		the two identified cycles $C$ and $C'$ via the chosen subgraph
		isomorphism.  By construction of $\kappa(\pi_{C},\pi_{C'})$, $\tilde C$
		together with these possible attached tree forms an inner boundary in
		$H\cd{C}{C'} H'$ w.r.t.\ $\kappa(\pi_{C},\pi_{C'})$.  Hence, every tree
		that is attached to a vertex $v$ in $\tilde C$ can safely be re-located
		in some face of $H\cd{C}{C'} H'$ that is incident to $v$ w.r.t.\
		$\kappa(\pi_{C},\pi_{C'})$.  In this way, we obtain a new planar
		embedding of $H\cd{C}{C'} H'$ such that $\tilde C$ is the boundary of
		an inner face and thus, $H\cd{C}{C'} H'$ satisfies $1$-FS.
		
		Now, assume that $H'$ contains more than one square. As argued
		above, $H'$ together with  its planar embedding
		$\pi_{C'}\in \{\sigma_{C'},\rho_{C'}\}$ satisfies  $2$-FS
		w.r.t.\ $\pi_{C'}$ and thus in $H'$ there are two faces bounded by a
		square w.r.t.\ $\pi_{C'}$. Since $C'$ is an outer boundary of $H'$
		w.r.t.\ $\pi_{C'}$ the other face that is bounded by a square $C''$
		must be an inner face. It is straightforward to see that $C''$ still
		bounds an inner face in $H\cd{C}{C'} H'$ w.r.t.\
		$\kappa(\pi_{C},\pi_{C'})$.  Hence, $H\cd{C}{C'} H'$ satisfies $1$-FS.
		\hfill $\diamond$
	\end{owndesc}
	In particular, $H\cd{C}{C'} H'$ is planar and contains the required
	square-boundary. Thus,
	$G\lc{1}\circledast G\lc{2} \circledast \cdots \circledast G_k$ is a
	well-defined planar graph for all $k$.
\end{proof}

\begin{remark}\label{rem:speci-planar}
	For a QS-graph $G(i)\cd{C}{C'} G_{i+1}$, we will use the notation
	$\pi\lc{C}$ as well as $\pi\lc{C'}\in \{\rho_{C'}, \sigma_{C'}\}$ and
	$\kappa(\pi_C, \pi_{C'})$ for the planar embedding of $G(i),G_{i+1}$ and
	$G(i)\cd{C}{C'} G_{i+1}$, respectively, as specified in the proof of
	\Cref{lem:QS-wellDef}.
\end{remark}

\begin{lemma}\label{lem:Qthree->pmg}
	Every QS-graph is a planar median graph.
\end{lemma}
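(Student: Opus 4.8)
The plan is to prove this by induction on the number $\ell$ of basic QS-graphs used in the construction, following the recursive structure of \cref{def:QS-graphs}. The base case is immediate: every cube $Q_3$ and every cyclic square-graph is a planar median graph by \Cref{lem:square-graph:basic-proper}, and these are precisely the basic QS-graphs. For the inductive step, suppose $G(i) = G(i-1)\cd{C_{i-1}}{C_i}G_i$ where by the induction hypothesis $G(i-1)$ is a planar median graph and $G_i$ is a basic QS-graph (hence also a planar median graph), and $C_{i-1}$, $C_i$ are square-boundaries in the respective graphs. I want to invoke \Cref{thm:if-inoutside->mg<->pmg}: if I can produce a planar embedding $\pi$ of $G(i)$ and a square $C\subseteq G(i)$ such that $\Gin{C,\pi}$ and $\Gout{C,\pi}$ are exactly $G(i-1)$ and $G_i$ (up to isomorphism), then since both are median graphs, $G(i)$ is a median graph, and it is planar by \Cref{lem:QS-wellDef}.

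The key technical point is to choose the embedding correctly. By \Cref{lem:QS-wellDef} and \cref{rem:speci-planar}, there is the embedding $\kappa(\pi_{C_{i-1}},\pi_{C_i})$ of $G(i)$ in which $\pi_{C_{i-1}}$ makes $C_{i-1}$ an inner boundary of $G(i-1)$, the embedding $\pi_{C_i}\in\{\rho_{C_i},\sigma_{C_i}\}$ makes $C_i$ the outer boundary of $G_i$, and all remaining vertices of $G_i$ are placed inside the inner face of $G(i-1)$ bounded by $C_{i-1}$. Writing $\tilde C$ for the square in $G(i)$ obtained by identifying $C_{i-1}$ with $C_i$, I claim that with respect to $\kappa(\pi_{C_{i-1}},\pi_{C_i})$ every vertex of $G(i-1)$ is almost-outside $\tilde C$ while every vertex of $G_i$ is almost-inside $\tilde C$. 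This is essentially by construction of $\kappa$: the "scaled" copy of $G_i$ sits in the face of $G(i-1)$ bounded by $C_{i-1}$, so all of its non-$\tilde C$ vertices are inside $\tilde C$, and conversely $G(i-1)$ lies outside. Consequently $\Gin{\tilde C,\kappa} = G_i$ and $\Gout{\tilde C,\kappa} = G(i-1)$ (here I use that median graphs are $K_3$-free by \cref{lem:basic-median-props}(1), so these subgraphs are exactly the ones induced by the almost-inside, resp.\ almost-outside, vertices, and no edge crosses $\tilde C$).

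Having set this up, the inductive step concludes: $\Gin{\tilde C,\kappa}=G_i$ is a planar median graph (basic QS-graph, \Cref{lem:square-graph:basic-proper}) and $\Gout{\tilde C,\kappa}=G(i-1)$ is a planar median graph by the induction hypothesis, so \Cref{thm:if-inoutside->mg<->pmg} gives that $G(i)$ is a median graph; planarity comes from \Cref{lem:QS-wellDef}. Hence $G(\ell)$ is a planar median graph, and since every QS-graph is of this form, the lemma follows.

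The main obstacle I anticipate is the careful verification of the "inside/outside partition" claim, i.e.\ that $\Gin{\tilde C,\kappa}$ and $\Gout{\tilde C,\kappa}$ really decompose as $G_i$ and $G(i-1)$ respectively rather than being something larger or overlapping in unexpected ways. One must be precise about the fact that $G(i-1)$ and $G_i$ meet only along $\tilde C$ (which is guaranteed by \cref{def:CD}), and that the embedding $\kappa$ genuinely places one graph inside and the other outside the cycle $\tilde C$; the subtlety is that $\tilde C$ need not be a \emph{face} boundary of $G(i)$ (it may have been "filled in" on one side), but it is still a cycle in the planar graph $G(i)$, so $\Gin{\tilde C,\kappa}$ and $\Gout{\tilde C,\kappa}$ are well-defined via the $(G(i),\kappa)$-induced embedding of $\tilde C$. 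Everything else is routine bookkeeping once this geometric picture is pinned down, and $K_3$-freeness of median graphs removes any worry about chords of $\tilde C$ lying on the "wrong" side.
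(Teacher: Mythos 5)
Your proof is correct and follows essentially the same route as the paper: induction on the number of factors, using the embedding $\kappa(\pi_{C},\pi_{C'})$ from \Cref{lem:QS-wellDef} and \cref{rem:speci-planar} to identify $\Gin{C,\pi}$ and $\Gout{C,\pi}$ with the two factors, and then invoking \Cref{thm:if-inoutside->mg<->pmg}. The paper dismisses the inside/outside identification as ``straightforward to verify''; your more careful discussion of that point is a reasonable elaboration, not a deviation.
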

\begin{proof}
	We show now, by induction on the number of factors, that every QS-graph
	is a median graph.  As base case, we have a basic QS-graph, i.e., either
	a cube or a cyclic square graph. These are planar and
	\Cref{lem:square-graph:basic-proper} implies that they are median graphs.
	Now, let
	$G = G\lc{1}\circledast G\lc{2} \circledast \cdots \circledast G_k =
	G(k-1)\cd{C}{C'} G_k$ be the ordered composition of $k$ basic QS-graphs
	and assume that the ordered composition of $i<k$ basic QS-graphs is a
	median graph.  Since $G = G(k-1)\cd{C}{C'} G_k$ is planar by
	\Cref{lem:QS-wellDef}, we can use the planar embedding
	$\pi \coloneqq \kappa(\pi_C, \pi_{C'})$ of $G$ (cf.\
	\Cref{rem:speci-planar}).  It is straightforward to verify that
	$G(k-1)= \Gout{C, \pi}$ and $G_k = \Gin{C,\pi}$. Thus,
	\cref{thm:if-inoutside->mg<->pmg} implies that $G$ is a median graph.
\end{proof}

We now want to consider the converse of \Cref{lem:Qthree->pmg}. 
We begin with some observations.

\begin{lemma}\label{lem:forb-planarE}
	Let $G$ be a book or a suspended cogwheel.  Then, for any planar
	embedding $\pi$ of $G$, there is a square $C^* \subseteq G$ such that
	$\Gin{C^*,\pi}\neq G$ and $\Gout{C^*,\pi}\neq G$.
\end{lemma}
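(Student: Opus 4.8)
The plan is to handle the two cases separately. For a book $K_2 \Box K_{1,3}$, the graph has a canonical structure: two ``hub'' vertices $h_1, h_2$ joined by an edge, and three ``pages'' consisting of paths $h_1 - a_i - h_2$ (equivalently, three squares $h_1 a_i h_2 a_j$ sharing the edge $\{h_1,h_2\}$, where I am thinking of the $K_{1,3}$ factor as having center $c$ and leaves $x_1,x_2,x_3$, so the squares are the $K_2\Box\{c,x_i\}$'s). For any planar embedding $\pi$ of the book, the three squares cannot all bound faces — by Euler's formula or simply because three squares sharing a common edge cannot be pairwise nested without one of them separating the other two in the plane. Concretely, in the rotation around $h_1$ the three edges $\{h_1,a_1\},\{h_1,a_2\},\{h_1,a_3\}$ appear in some cyclic order, say $a_1,a_2,a_3$, and together with $\{h_1,h_2\}$; then the square through $a_2$ (the ``middle'' page) has $a_1$ on one side and $a_3$ on the other, so this square $C^*$ has a vertex strictly inside and a vertex strictly outside, giving $\Gin{C^*,\pi}\neq G$ and $\Gout{C^*,\pi}\neq G$.

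For a suspended cogwheel $M_n^*$, which is obtained from the cogwheel $M_n$ (a cycle $C_n$, $n\ge 8$ even, with a central vertex $z$ adjacent to every second cycle vertex) by attaching a pendant vertex $p$ to $z$, I would argue as follows. The central vertex $z$ has degree at least $\tfrac{n}{2}+1 \ge 5$, and $M_n^*$ contains $\tfrac n2$ squares of the form $z\,u_i\,w_i\,u_{i+1}$ where $u_i$ are the spokes-endpoints and $w_i$ the intermediate cycle vertices. Fix any planar embedding $\pi$. Look at the rotation at $z$: the incident edges are the $\tfrac n2 \ge 4$ spokes plus the pendant edge $\{z,p\}$. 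Pick three consecutive spoke-edges $\{z,u_{i-1}\},\{z,u_i\},\{z,u_{i+1}\}$ in this rotation that do not have the pendant edge between them (possible since $\tfrac n2 \ge 4$ and there is only one pendant edge). The square $C^* = z\,u_{i-1}\,w_{i-1}\,u_i$ (using the rim path $u_{i-1} - w_{i-1} - u_i$) is a cycle; relative to it, the vertex $u_{i+1}$ (together with the rest of the rim on its side) lies on one side while $p$ lies on the other side — or at the very least, not all of $V(G)\setminus V(C^*)$ can lie on a single side of $C^*$, because $C^*$ separates $z$'s remaining neighborhood. Hence again $\Gin{C^*,\pi}\neq G$ and $\Gout{C^*,\pi}\neq G$.

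The cleanest uniform phrasing is: in each of these graphs there is a square $C^*$ such that $G - V(C^*)$ is disconnected, with vertices in at least two different components; since a square is a cycle, in any planar embedding each component of $G - V(C^*)$ lies entirely inside or entirely outside $C^*$ (its vertices cannot straddle a cycle), and since the components cannot all be on the same side (the square $C^*$ has four vertices with attachments on, topologically, both sides once $\deg z \ge 5$ forces a spoke on each side of $C^*$, and the pendant $p$ is attached to $z$ as well), we get a nonempty set of vertices strictly inside and a nonempty set strictly outside. I would therefore first reduce the lemma to: \emph{there exists a square $C^*$ such that $G-V(C^*)$ has at least two components} and then observe this is immediate from the explicit descriptions above (for the book, removing a middle-page square disconnects the third page vertex $a_k$ from nothing else — wait, here one must be slightly careful, so instead remove the square $\{h_1,a_1,h_2,a_2\}$: this leaves only $a_3$, which is isolated, but that is a single component; the right square to remove to split things is not available in the tiny book).

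Let me correct the book case, which is the subtle one: the book has only $6$ vertices, and removing any square leaves $2$ vertices which may be adjacent or not, hence one component — so the ``disconnection'' reduction fails there and one genuinely needs the topological argument. So the \textbf{main obstacle} is precisely the book: one must argue directly with the rotation system. With the three pages $a_1,a_2,a_3$ in cyclic order around $h_1$, the square $C^* = h_1 a_1 h_2 a_2$ (a $4$-cycle) is embedded as a closed Jordan curve; the edge $\{h_1,a_3\}$ leaves $h_1$ in the rotational sector between $\{h_1,a_2\}$ and $\{h_1,a_1\}$ on exactly one side of this curve, but it must re-enter $h_2$, and by a parity/crossing argument $a_3$ ends up on the bounded side for one choice of which bounded region we call ``inside'' and on the unbounded side otherwise — in every embedding $a_3$ is on \emph{one} of the two sides, hence is either strictly inside or strictly outside $C^*$, so $\Gin{C^*,\pi} \neq G$ or $\Gout{C^*,\pi}\neq G$; but actually the lemma asks for \emph{both} $\neq G$ simultaneously. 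To get both, note that whichever side $a_3$ is on, the \emph{other} side is a face incident only to $C^*$-edges (since the only remaining vertex $a_3$ is on the first side), so that other side is empty of vertices; that does not immediately give a vertex on both sides. Hence for the book I would instead pick $C^*$ depending on $\pi$: among the three squares, choose the one whose ``outside'' region (in $\pi$) contains the third page-vertex — this is the middle page in the rotation at $h_1$ — and then its inside is the empty face and its outside contains $a_{\text{third}}$; so $\Gout{C^*,\pi} \ne G$ trivially and $\Gin{C^*,\pi}\neq G$ because $\Gin{C^*,\pi} = C^* \ne G$. Thus the real content is: \emph{some square's interior is an empty face and its exterior contains a vertex}, which holds because the three pages cannot all bound faces on the same side. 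I expect the writeup to spend most of its length pinning down this rotation argument for the book and the analogous ``a spoke and the pendant lie on opposite sides of a spoke-square'' argument for the cogwheel.
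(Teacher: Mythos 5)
Your opening observation for the book --- that in the rotation at a hub vertex one of the three pages is ``middle'' relative to the hub edge $\{h_1,h_2\}$, and that the middle page's square separates the other two pages --- is correct, and it reaches the same conclusion as the paper's exhaustive case analysis (the paper checks how many of the four remaining vertices lie inside a fixed square and always lands on a square with two vertices inside and two outside). The problem is that you then retract this argument on the basis of a factual error: $K_2\Box K_{1,3}$ has eight vertices, not six; each page contributes two vertices, and deleting a page-square leaves the other two pages as two disjoint edges. Your replacement argument is wrong on two counts. First, a square $C^*$ whose inner face is empty does \emph{not} witness the lemma: $\Gout{C^*,\pi}$ is obtained by deleting what lies in the \emph{inner} face, so an empty inner face gives $\Gout{C^*,\pi}=G$, the opposite of your claim that ``$\Gout{C^*,\pi}\ne G$ trivially''; the lemma requires a vertex strictly inside \emph{and} a vertex strictly outside, so your closing summary (``some square's interior is an empty face and its exterior contains a vertex'') misstates what must be proved. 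Second, the proposed uniform reduction to ``$G-V(C^*)$ is disconnected'' is insufficient even where it applies, since both components may lie on the same side of $C^*$ (for the book this happens for either of the two non-middle page-squares).

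The cogwheel case fails in a sharper way. You deliberately pick three consecutive spokes with the pendant edge \emph{not} between them and set $C^*=z\,u_{i-1}\,w_{i-1}\,u_i$; but then the edges $\{z,u_{i+1}\}$ and $\{z,p\}$ both leave $z$ in the same rotational sector, hence on the same side of $C^*$, so the claim that $u_{i+1}$ and $p$ end up on opposite sides is false. Concretely, in the standard wheel embedding with $p$ placed inside one square face, your $C^*$ bounds an empty face and \emph{all} of $V(G)\setminus V(C^*)$ lies outside it, so even your weaker fallback (``not all on one side'') fails. The square that works is the opposite choice: the one bounding the face that contains the pendant. This is exactly what the paper does --- it shows that the face containing $x$ is bounded by the square on $c,c_i,c_{i+1},c_{i+2}$ together with $x$, so that $x$ sits on one side of that square and the remaining $n-3\ge 5$ rim vertices on the other --- and that identification (including the nesting argument showing no other rim vertex can lie inside that square) is the missing content you would need to supply.
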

\begin{proof}
	First, let $G = K_2\Box K_{1,3}$ be a book.  Assume that $V(K_2)=\{0,1\}$
	and $V(K_{1,3}) = \{0,1,2,3\}$ where $0$ is the unique vertex adjacent to
	the remaining ones.  By definition of the Cartesian product,
	$V(G) = \{00,10,01,11,02,12,03,13\}$ and we have exactly three squares
	$C,C',C''$ in $G$ that consist of the vertices $V(C) =\{00,10,02,12\}$,
	$V(C') =\{00,10,01,11\}$, and $V(C'') =\{00,10,03,13\}$.  Below, the
	terms ``inside'' and ``outside'' of some subgraph $G'\subseteq G$ refer
	to the $(G,\pi)$-induced embedding of $G'$.  Now, consider the square $C$
	and an arbitrary embedding $\pi$ of $G$.  We have to examine the cases
	that $k_1$ vertices are inside $C$ and $k_2$ are outside of $C$ where
	$k_1+k_2=4$, the number of remaining vertices in
	$V(G)\setminus V(C)$.  Hence, $k_1\in \{0,1,2,3,4\}$.
	
	Let us start with $k_1=2$ and thus, $k_2=2$.  Assume w.l.o.g.\ that $01$
	is inside of $C$. In this case, \cref{lem:in-out-C4} implies that the
	second vertex inside of $C$ must be vertex $11$. Hence, the two vertices
	outside of $C$ are $03$ and $13$. Now one readily observes that
	$\Gin{C,\pi}\neq G$ and $\Gout{C,\pi}\neq G$.  The latter reasoning implies
	that the case $k_1=1$ cannot occur, since if one vertex is inside $C$
	there must also be a second vertex inside $C$.  By symmetry, this also
	excludes the case $k_2=1$ and thus, $k_1=3$.
	
	Thus, we are left with the case $k_1\in \{0,4\}$. Let $k_1=4$.  Consider
	the square $C'$. By similar arguments as above, there are either $0$, $2$
	or $4$ vertices inside of $C'$.  However, the latter case cannot occur,
	since all $k_1=4$ remaining vertices are inside $C$ and thus, there must
	be vertices outside $C'$.  If there are $2$ vertices inside of $C'$, then
	the vertices inside $C'$ must be $03$ and $13$ since $C'$ is
	almost-inside of $C$. Now, one easily verifies that $\Gin{C',\pi}\neq G$
	and $\Gout{C',\pi}\neq G$.  If there are no vertices inside of $C'$, then
	all vertices $02,12,03,13$ must be outside of $C'$. Since $03,13$ are
	inside of $C$, we can conclude that $\Gin{C'',\pi}\neq G$ and
	$\Gout{C'',\pi}\neq G$.  By symmetry, the case $k_2=4$ and thus, $k_1=0$
	is shown.
	
	In summary, for all possible cases we found a square $C^* \subseteq G$ in
	the book $G$ such that $\Gin{C^*,\pi}\neq G$ and $\Gout{C^*,\pi}\neq G$.
	
	Now, let $G\simeq M_n^*$ be a suspended cogwheel.  Let $x$ be the vertex
	that is adjacent to the central vertex $c$ of the underlying cogwheel
	$M_n$. Moreover, denote with $c_1,\dots,c_n$ the vertices of the cycle
	$C_n\subseteq M_n^*$ that are distinct from $x$ and adjacent to $c$. We
	assume that the edges of this cycle $C_n$ are $\{c_n,c_1\}$ and
	$\{c_i,c_{i+1}\}$, $1\leq i\leq n-1$.  In addition, let $c_{k}$ with $k$
	being even be the vertices that are adjacent to $c$. In what follows, all
	indices $j,i,i+1,i+2,\dots$ are taken w.r.t.\ $(\mathrm{mod}\ n)$.
	
	Let $\pi$ be an arbitrary embedding of $G$.  In what follows, the terms
	``inside'' and ``outside'' of some subgraph $G'\subseteq G$ refer to the
	$(G,\pi)$-induced embedding of $G'$.  It is easy to see that the vertex
	$x$ must be located in one of the faces that is bounded by a subgraph
	$G'\subseteq G$ such that $G'$ contains two vertices $c_i$ and $c_{i+2}$
	with $i$ being even.  Thus, assume that $x$ is in a face bounded by
	$G'\subseteq G$ such that $c_i, c_{i+2}\in V(G')$, $i$ even.  We continue
	by showing that, in this case, the square $C+x$ induced by the vertices
	$x, c, c_i, c_{i+1}, c_{i+2}$ must be a boundary in $G$.  Assume, for
	contradiction, that this is not the case.  Hence, one $c_{j}$ with
	$j\notin \{i,i+1,i+2\}$ must be contained inside of $C$.  But then also
	$c_{j+1}$ must be contained inside of $C$ as otherwise the edge
	$\{c_j,c_{j+1}\}$ would cross one of the edges or vertices of $C$
	w.r.t.\ the planar drawing $\pi$ of $G$. Repeating the latter argument
	shows that all vertices $c_{j}$ with $j\notin \{i,i+1,i+2\}$ must be
	located inside of $C$. In this case, however, $x$ is located in a face
	that is bounded by some $G''\subseteq G$ that contains the vertices
	$c_j, c_{j+2}\in V(G')$ where $j$ is even and where at least one of $c_j$
	and $c_{j+2}$ is distinct from $c_i$ or $c_{i+1}$; a contradiction.
	Hence, $C+x$ must be a boundary in $G$.
	
	Finally, observe that either $\Gin{C,\pi} = C+x$ or $\Gout{C,\pi} = C+x$
	and thus, either $\Gout{C,\pi} = G-x$ or $\Gin{C,\pi} = G-x$.  Hence, we
	found the square $C^* =C$ such that $\Gin{C^*,\pi} \neq G$ and
	$\Gout{C^*,\pi}\neq G$.
\end{proof}

\begin{lemma}\label{lem:decomp}
	If $G$ is a $\pi$-embedded cyclic planar median graph that is not a basic
	QS-graph,  then there is a square $C^* \subseteq G$ such that
	$\Gin{C^*,\pi} \neq G$ and $\Gout{C^*,\pi} \neq G$.
\end{lemma}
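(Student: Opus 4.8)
The plan is to reduce to \cref{prop:square-from-median} and then split into cases according to which forbidden induced subgraph shows up. First I would argue that $G$ is not a square-graph: since $G$ is cyclic but is not a cyclic square-graph (it is not a basic QS-graph) and is not a cube, and since any cyclic square-graph is a \emph{cyclic} square-graph, $G$ cannot be a square-graph at all. As $G$ is a median graph, \cref{prop:square-from-median} then forces $G$ to contain, as an \emph{induced} subgraph $H$, either a cube $Q_3$, a book $K_2\Box K_{1,3}$, or a suspended cogwheel. I would then produce the desired square $C^*$ separately for $H\in\{\text{book},\text{suspended cogwheel}\}$ and for $H=Q_3$.

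Suppose first that $H$ is a book or a suspended cogwheel, and let $\pi_H$ be the $(G,\pi)$-induced embedding of $H$. Applying \cref{lem:forb-planarE} to $H$ equipped with $\pi_H$ gives a square $C^*\subseteq H\subseteq G$ with $\Hin{C^*,\pi_H}\neq H$ and $\Hout{C^*,\pi_H}\neq H$; equivalently, some vertex of $H$ lies strictly inside $C^*$ and some vertex of $H$ lies strictly outside $C^*$ with respect to the $(H,\pi_H)$-induced embedding of $C^*$. The only bookkeeping point here is that, since $C^*\subseteq H$ and $\pi_H$ is itself the $(G,\pi)$-induced embedding of $H$, the $(H,\pi_H)$-induced embedding of $C^*$ is literally the same drawing of $C^*$ as the $(G,\pi)$-induced embedding of $C^*$. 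Hence those two vertices of $G$ are strictly inside, resp.\ strictly outside, $C^*$ also with respect to the $(G,\pi)$-induced embedding of $C^*$, so $\Gout{C^*,\pi}$ omits the former and $\Gin{C^*,\pi}$ omits the latter; thus $\Gin{C^*,\pi}\neq G$ and $\Gout{C^*,\pi}\neq G$.

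Now suppose $H=Q_3$, so $G[V(Q_3)]=Q_3$. Since $G$ is not a cube we have $V(Q_3)\subsetneq V(G)$ (otherwise $G=G[V(Q_3)]=Q_3$), so we may pick a vertex $u\in V(G)\setminus V(Q_3)$. In the $(G,\pi)$-induced embedding of $Q_3$ the vertex $u$ lies in exactly one face $F$ of $Q_3$; since $Q_3$ is $3$-connected its embedding is unique by \cref{thm:3c-p}, so by \cref{obs:Cube-Embedding} the boundary $C^*$ of $F$ is a square, and $V(C^*)\subsetneq V(Q_3)$. The cycle $C^*$ is a Jordan curve whose two complementary regions are $F$ (which contains $u$ and no vertex of $Q_3$) and the region containing all four vertices of $V(Q_3)\setminus V(C^*)$. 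With respect to the $(G,\pi)$-induced embedding of $C^*$, one of these two regions is the inner face of $C^*$ and the other the outer face; in either case there is a vertex of $G$ strictly inside $C^*$ and a vertex of $G$ strictly outside $C^*$. Hence $\Gin{C^*,\pi}\neq G$ and $\Gout{C^*,\pi}\neq G$.

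I expect the cube case to be the main obstacle. The subtlety is that an arbitrary square of $Q_3$ need not work: every square of $Q_3$ is a face boundary in the embedding of $Q_3$, so one of its two sides contains no cube vertices, and the split is nontrivial only if $G$ happens to place a vertex on that side. The right choice is therefore the square bounding precisely the face of $Q_3$ that houses a vertex of $G$ outside $Q_3$, after which the argument is just a Jordan-curve separation together with \cref{obs:Cube-Embedding} (via \cref{thm:3c-p}) to guarantee that face is bounded by a square. A secondary, entirely routine point is the identification of the $(H,\pi_H)$-induced embedding of $C^*$ with the $(G,\pi)$-induced embedding of $C^*$, which is what lets \cref{lem:forb-planarE} transfer from $H$ to all of $G$.
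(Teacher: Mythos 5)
Your proposal is correct and follows essentially the same route as the paper's proof: invoke \cref{prop:square-from-median} to obtain a forbidden induced subgraph $H$, transfer the conclusion of \cref{lem:forb-planarE} from $H$ to $G$ when $H$ is a book or suspended cogwheel, and in the cube case choose the square bounding the face of $Q_3$ that contains a vertex of $G$ outside $Q_3$ (via \cref{obs:Cube-Embedding}). Your two added touches -- explicitly noting that a cyclic non-basic-QS median graph cannot be a square-graph, and using inducedness of $H$ to justify $V(Q_3)\subsetneq V(G)$ -- are sound refinements of steps the paper leaves implicit.
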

\begin{proof}
	Let $G$ be a $\pi$-embedded planar median graph that is not a basic
	QS-graph.  \cref{prop:square-from-median} implies that $G$ must contain a
	cube, a book or a suspended cogwheel.  Let $H\subseteq G$ be such a
	forbidden subgraph.
	
	First, assume that $H$ is a cube.  Note, every face of $H$ must be
	bounded by squares (cf.\ \cref{obs:Cube-Embedding}).  Since $H\subseteq G$
	and $G$ is not a cube, there must be a vertex $v\in V(G)\setminus V(H)$
	that lies in a face of $H$ that is bounded by a square $C^*\subseteq H$
	in $H$ (w.r.t.\ the $(G,\pi)$-induced embedding of $H$).  Note, $C^*$ is
	not necessarily a boundary in $G$ but, of course, a subgraph of $G$.  Let
	$v' \in V(H)\setminus (V(C^*)\cup\{v\})$. If $v$ lies in the outer face
	of $H$ w.r.t.\ the $(G,\pi)$-induced embedding, then
	$\Gin{C^*,\pi}\subseteq G-v\subsetneq G$ and
	$\Gout{C^*,\pi}\subseteq G-v'\subsetneq G$.  Otherwise, if $v$ lies in an
	inner face of $H$ w.r.t.\ the $(G,\pi)$-induced embedding, then
	$\Gout{C^*,\pi}\subseteq G-v\subsetneq G$ and
	$\Gin{C^*,\pi}\subseteq G-v'\subsetneq G$.  In either case, there is a
	square $C^*$ such that $\Gin{C^*,\pi} \neq G$ and $\Gout{C^*,\pi} \neq G$.
	
	If $H$ is a book or a suspended cogwheel, then we can apply
	\Cref{lem:forb-planarE} to conclude that there is a square
	$C^*\subseteq H$ such that $\Gin{C^*,\pi}\neq H$ and
	$\Gout{C^*,\pi}\neq H$ for every planar embedding of $H$ and thus, in
	particular, for the $(G,\pi)$-induced embedding of $H$. The latter
	immediately implies that $\Gin{C^*,\pi} \neq G$ and
	$\Gout{C^*,\pi} \neq G$.
\end{proof}

\begin{lemma}\label{lem:pmg->Qthree}
	Every cyclic planar median graph is a QS-graph.
\end{lemma}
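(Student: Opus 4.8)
The plan is to argue by induction on the number of vertices of $G$. If $G$ is a \emph{basic} QS-graph, i.e., a cube or a cyclic square-graph, then $G$ is a QS-graph by \textbf{(Q1)} and there is nothing to prove; this also settles the base case, since the only cyclic planar median graph with at most four vertices is $C_4$, which is a cyclic square-graph. So assume that $G$ is a cyclic planar median graph that is not basic, and fix a planar embedding $\pi$ of $G$. By \Cref{lem:decomp} there is a square $C^*\subseteq G$ with $\Gin{C^*,\pi}\neq G$ and $\Gout{C^*,\pi}\neq G$. Put $A\coloneqq\Gin{C^*,\pi}$ and $B\coloneqq\Gout{C^*,\pi}$. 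Both $A$ and $B$ contain the cycle $C^*$, hence both are cyclic; both are subgraphs of the planar graph $G$, hence both are planar; and since $G$ is a median graph, \Cref{thm:if-inoutside->mg<->pmg} shows that $A$ and $B$ are median graphs. Moreover $\Gin{C^*,\pi}\neq G$ forces $B$ to contain a vertex that $A$ does not, and symmetrically, so $|V(A)|<|V(G)|$ and $|V(B)|<|V(G)|$. By the induction hypothesis, $A$ and $B$ are QS-graphs. Finally, recall that $G=A\cd{C^*}{C^*}B$ and that, with respect to the corresponding $(G,\pi)$-induced embeddings, $C^*$ is the outer boundary of $A$ and an inner boundary of $B$; in particular $C^*$ is a square-boundary of both $A$ and $B$.

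Hence the lemma reduces to the following gluing statement: \emph{if $X$ and $Y$ are QS-graphs and $C_X\subseteq X$, $C_Y\subseteq Y$ are square-boundaries, then $X\cd{C_X}{C_Y}Y$ is a QS-graph.} I would prove this by induction on the number $m$ of basic factors in a fixed QS-decomposition $Y=Y_1\circledast\cdots\circledast Y_m$. For $m=1$ the graph $Y$ is basic, so $C_Y$ is a square-boundary of $Y$ by \Cref{obs:Cube-Embedding} and \Cref{lem:square-faces-squaregraph}; writing $X=X_1\circledast\cdots\circledast X_p$, the composition $X_1\circledast\cdots\circledast X_p\circledast Y$ is a valid QS-decomposition of $X\cd{C_X}{C_Y}Y$ because the last gluing is along the square-boundary $C_X$ of $X$ and the square-boundary $C_Y$ of the basic QS-graph $Y$. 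For $m\ge 2$ write $Y=Y'\cd{E}{E'}Y_m$ with $Y'=Y_1\circledast\cdots\circledast Y_{m-1}$ a QS-graph and $E$, $E'$ square-boundaries of $Y'$ resp.\ $Y_m$, and let $\widetilde E$ be the square of $Y$ that is the common image of $E$ and $E'$. As in the proof of \Cref{lem:Qthree->pmg}, the factors $Y'$ and $Y_m$ are $\Gout{\widetilde E,\kappa}$ and $\Gin{\widetilde E,\kappa}$ for the composition embedding $\kappa$ of $Y$, so \Cref{lem:C4inGinGout} (applied to the planar median graph $Y$, cf.\ \Cref{lem:Qthree->pmg}) shows that the square $C_Y$ lies in $Y'$ or in $Y_m$. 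The plan is then to attach $X$ along $C_Y$ first to whichever of $Y'$, $Y_m$ contains $C_Y$ — using the base case $m=1$ if this factor is the basic graph $Y_m$, and the induction hypothesis if it is the $(m-1)$-factor QS-graph $Y'$ — producing a QS-graph $\widehat X$; then, using that $\circledast$ is commutative and is associative whenever the two compositions meet along a common cycle (as illustrated for $G(5)$ in \cref{fig:exam-cubesquare}), one rewrites $X\cd{C_X}{C_Y}Y$ as $\widehat X$ glued along $\widetilde E$ to the remaining factor, and another application of the base case or of the induction hypothesis (now with fewer basic factors in the second argument) finishes the step.

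The step I expect to be the main obstacle is the bookkeeping of square-boundaries, since a $\circledast$-composition may turn the square glued along into one that bounds no face of the resulting graph (see \cref{fig:defCD}); one must therefore ensure that, throughout the combined decomposition, the square used for the next gluing is still a square-boundary of the current partial composition. In particular, one needs $\widetilde E$ to remain a square-boundary of $\widehat X$ after attaching $X$. This is controlled by always gluing the incoming piece into a face bounded by the relevant square inside a basic factor: a basic QS-graph with at least two squares satisfies $2$-FS (cf.\ \Cref{obs:squaregraph-cube-embedding}), so it admits an embedding in which both the square receiving $X$ and the square used next are face boundaries, and any trees attached to these squares may be relocated exactly as in the proof of \Cref{lem:QS-wellDef}. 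The one delicate case is when $C_Y$ coincides with the internal gluing square $\widetilde E$ of $Y$; here planarity of $Y$ forces one of $Y'$, $Y_m$ not to intrude into the face of $Y$ bounded by $C_Y$, so $X$ may be placed in that face and the combined decomposition remains valid. Once the gluing statement is established, applying it with $X=A$, $Y=B$ and gluing square $C^*$ shows that $G=A\cd{C^*}{C^*}B$ is a QS-graph, which completes the induction.
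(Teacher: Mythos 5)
Your reduction is set up differently from the paper's, and the difference is where the gap lies. You apply the induction hypothesis to \emph{both} $A=\Gin{C^*,\pi}$ and $B=\Gout{C^*,\pi}$ and then need a general gluing lemma: \emph{any} two QS-graphs composed along square-boundaries form a QS-graph. That statement is true (it is essentially \Cref{thm:if-inoutside->QSthree<->cubesq}, which however is derived \emph{from} this lemma, so you rightly try to prove it directly), but your inductive proof of it is only a sketch precisely at the points the paper flags as dangerous. First, when the square $C_Y$ lies in the non-basic factor $Y'$, you invoke the induction hypothesis for $X\cd{C_X}{C_Y}Y'$, which requires $C_Y$ to be a square-boundary \emph{of $Y'$}; you only know it is a square-boundary of $Y$. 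This is not automatic: by \cref{fig:defCD}, a square of a QS-graph need not be a square-boundary of it, so ``$C_Y$ is a square contained in the QS-graph $Y'$'' does not by itself give you the hypothesis you need, and no argument is supplied. Second, the rewriting of $X\cd{C_X}{C_Y}Y$ as ``$\widehat{X}$ glued along $\widetilde{E}$ to the remaining factor'' is exactly the kind of re-association/re-ordering of $\circledast$ that the paper warns is not available in general (non-associativity, \cref{fig:exam-cubesquare}) and that it only establishes later, with effort, in \Cref{prop:find-factor-ordering} using the algorithm. In particular, after attaching $X$ you must re-expand $\widehat{X}$ into \emph{basic} factors in an order in which every intermediate gluing square is still a square-boundary of the partial composition; asserting that ``trees may be relocated as in \Cref{lem:QS-wellDef}'' and that the $C_Y=\widetilde{E}$ case is handled ``by planarity'' does not discharge this. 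So the proposal as written has a genuine gap in the inductive step of the gluing lemma.

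The paper avoids the general gluing lemma altogether by strengthening the choice of $C^*$: starting from a forbidden subgraph $H$ (cube, book, or suspended cogwheel) it iteratively replaces $H$ by a forbidden subgraph inside $\Gin{C^*,\pi}$ until $\Gin{C^*,\pi}$ is a cube or contains no forbidden subgraph, whence by \Cref{prop:square-from-median} it is a \emph{basic} QS-graph. Then only $\Gout{C^*,\pi}$ needs the induction hypothesis, and the final step is just \textbf{(Q2)}: append the single basic factor $\Gin{C^*,\pi}$ to the ordered composition of $\Gout{C^*,\pi}$ along $C^*$, which is an outer boundary of the former and an inner boundary of the latter. Your base case of the gluing lemma (second factor basic) is exactly this and is correct; if you want to salvage your argument without proving the general gluing lemma, redirect the choice of $C^*$ so that one side is guaranteed basic, and the rest of your proof goes through.
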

\begin{proof}
	Since squares that are possibly amalgamated with trees are square-graphs
	and thus, median graphs, for every integer $n\geq 4$ there is a cyclic
	planar median graph on $n\ge 4$ vertices. Thus, we can proceed by
	induction on $|V(G)|$. The square is the only cyclic median graph with
	$n=4$ vertices. By definition, it is also a (basic) QS-graph, and thus
	serves as base case.
	
	For the induction step consider a $\pi$-embedded planar median graph $G$
	with $n=|V(G)|>4$ vertices and assume that every planar median graph $G'$
	with $|V(G')|<|V(G)|$ vertices is a QS-graph. If $G$ is a cube or a
	square-graph then $G$ is a QS-graph and we are done.  Hence, assume that
	$G$ is neither a cube nor a square-graph and let $\pi$ be a planar
	embedding of $G$. By \Cref{lem:decomp}, there is a square
	$C^* \subseteq G$ such that $\Gin{C^*,\pi}\neq G$ and
	$\Gout{C^*,\pi}\neq G$.  As shown in the proof of \Cref{lem:decomp}, we
	can find such a square $C^*$ by taking a forbidden subgraph $H$, i.e., a
	book, a cube or suspended cogwheel, and a particular square
	$C^*\subseteq H$.  We may assume w.l.o.g.\ that $\Gin{C^*,\pi}$ is either
	a cube or 
	does not contain a book, a cube or a suspended
	cogwheel as a subgraph. Otherwise, we could
	iteratively replace $H$ by such a forbidden subgraph
	$\tilde H\subseteq \Gin{C^*,\pi}$, and replace $C^*$ by a square
	$\tilde C^*$ of $\tilde H$ with the property that
	$\Gin{\tilde C^*,\pi}\neq G$ and $\Gout{\tilde C^*,\pi}\neq G$, until we
	eventually obtain a forbidden subgraph $H$ and a square $C^*$ such that
	$\Gin{C^*,\pi}$ is either a cube or, otherwise, does not any longer
	contain a book, a cube or a suspended cogwheel.
	
	By \Cref{thm:if-inoutside->mg<->pmg}, $\Gin{C^*,\pi}$ and
	$\Gout{C^*,\pi}$ are median graphs. If $\Gin{C^*,\pi}$ is not a cube,
	then \Cref{prop:square-from-median} and the fact that $\Gin{C^*,\pi}$
	does not contain a cube, book or a suspended cogwheel, implies that
	$\Gin{C^*,\pi}$ is a square-graph.  In either case, $\Gin{C^*,\pi}$ is a
	basic QS-graph.  Moreover, since $\Gout{C^*,\pi}$ is a median graph with
	$|V(\Gout{C^*,\pi})|<|V(G)|=n$, induction hypothesis implies that
	$\Gout{C^*,\pi}$ is a QS-graph.  Hence, $\Gout{C^*,\pi}$ has an ordered
	composition
	$\Gout{C^*,\pi} = (\dots (G_1\circledast G_2) \dots )\circledast G_k$ of
	basic QS-graphs.  Therefore, the ordered composition
	$(\dots (G_1\circledast G_2) \dots )\circledast G_k) \cd{C^*}{C^* }
	\Gin{C^*,\pi}$ is well-defined and yields a QS-graph that is identical to
	$G$.
\end{proof}

As an immediate consequence of \Cref{lem:pmg->Qthree,lem:Qthree->pmg} we
obtain
\begin{theorem}\label{thm:mpg<->Qthree}
	A graph $G$ is a planar median graph if and only if $G$ is a QS-graph or
	a tree.
\end{theorem}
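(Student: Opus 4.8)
The plan is to derive Theorem~\ref{thm:mpg<->Qthree} directly from the two preceding lemmas, \Cref{lem:pmg->Qthree} and \Cref{lem:Qthree->pmg}, by splitting into the cyclic and acyclic cases and invoking the fact that every connected graph is either cyclic or a tree. No new machinery is needed; the statement is a packaging of results already established.

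For the \emph{only if} direction, suppose $G$ is a planar median graph. Median graphs are connected by definition, so $G$ is either a tree or cyclic. If $G$ is a tree we are immediately done. If $G$ is cyclic, then \Cref{lem:pmg->Qthree} applies verbatim and yields that $G$ is a QS-graph.

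For the \emph{if} direction, I would treat the two alternatives separately. If $G$ is a QS-graph, then \Cref{lem:Qthree->pmg} states that $G$ is a planar median graph. If $G$ is a tree, I would observe that $G$ is in fact a square-graph: with respect to any planar embedding a tree has no inner face, so both conditions (a) and (b) of \cref{def:s-g} hold vacuously, and then \Cref{lem:square-graph:basic-proper} gives that $G$ is a planar median graph.

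I do not anticipate any genuine obstacle; the substantive work is entirely contained in \Cref{lem:pmg->Qthree} and \Cref{lem:Qthree->pmg}. The only remarks worth including are the verification that trees count as planar median graphs and the note that the two cases ``QS-graph'' and ``tree'' are mutually exclusive (a QS-graph is built from blocks containing squares and is therefore cyclic, whereas a tree is acyclic), so the dichotomy in the statement is clean.
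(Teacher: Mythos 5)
Your proposal is correct and follows exactly the route the paper takes: the theorem is stated there as an immediate consequence of \Cref{lem:pmg->Qthree} and \Cref{lem:Qthree->pmg}, with the tree case handled by the observation (made explicitly in \cref{sec:preli}) that trees are square-graphs and hence planar median graphs by \Cref{lem:square-graph:basic-proper}. Your additional remarks on the cyclic/acyclic dichotomy and the mutual exclusivity of the two alternatives are accurate but not needed beyond what the paper already records.
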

\Cref{thm:mpg<->Qthree}, together with \Cref{thm:if-inoutside->mg<->pmg},
furthermore implies the following:
\begin{theorem} \label{thm:if-inoutside->QSthree<->cubesq} 
	Let $G$ be a $\pi$-embedded planar
	graph and  $C\subseteq G$ be a square.
	Then, $G$ is a QS-graph if and only if $\Gin{C,\pi}$
	and $\Gout{C,\pi}$ are QS-graphs.
\end{theorem}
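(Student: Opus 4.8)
The plan is to derive this statement as a short corollary of the two theorems already established, namely \Cref{thm:mpg<->Qthree} (planar median graphs are exactly the QS-graphs together with the trees) and \Cref{thm:if-inoutside->mg<->pmg} (the median-graph property transfers between $G$ and the pair $\Gin{C,\pi},\Gout{C,\pi}$). The only gap to bridge is that \Cref{thm:mpg<->Qthree} offers two alternatives — QS-graph or tree — whereas the statement we want is purely about QS-graphs; this is resolved by observing that all the graphs in play are cyclic, so the tree alternative never arises.

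Concretely, I would first record the elementary observation that every QS-graph is cyclic: a basic QS-graph is either a cube or a cyclic square-graph, and each application of the operator $\circledast$ identifies a square of one factor with a square of the other, so the composed graph again contains a square. Combined with \Cref{thm:mpg<->Qthree}, this yields the clean equivalence: for every cyclic planar graph $H$, the graph $H$ is a QS-graph if and only if $H$ is a median graph.

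Next I would verify that this equivalence applies to all three graphs occurring in the statement. Since $C\subseteq G$, the graph $G$ is cyclic; and since the four vertices of $C$ belong to both $\Gin{C,\pi}$ and $\Gout{C,\pi}$ (by the definition of these subgraphs), each of these two subgraphs contains $C$ and is therefore cyclic as well. Moreover $\Gin{C,\pi}$ and $\Gout{C,\pi}$ are subgraphs of the planar graph $G$, hence planar. Replacing ``QS-graph'' by ``median graph'' throughout the statement — which is legitimate by the previous paragraph — turns it into precisely \Cref{thm:if-inoutside->mg<->pmg}, so the proof closes by chaining: $G$ is a QS-graph $\Leftrightarrow$ $G$ is a median graph $\Leftrightarrow$ $\Gin{C,\pi}$ and $\Gout{C,\pi}$ are median graphs $\Leftrightarrow$ $\Gin{C,\pi}$ and $\Gout{C,\pi}$ are QS-graphs.

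I do not anticipate a genuine obstacle: all the structural content is packed into \Cref{thm:mpg<->Qthree} and \Cref{thm:if-inoutside->mg<->pmg}. The only point demanding a line of care is the cyclicity bookkeeping — confirming that the square $C$ really does survive inside both $\Gin{C,\pi}$ and $\Gout{C,\pi}$, and that the ``tree'' branch of \Cref{thm:mpg<->Qthree} can be safely discarded once cyclicity is in hand.
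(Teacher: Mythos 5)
Your proposal is correct and follows essentially the same route as the paper, which likewise derives this theorem directly from \Cref{thm:mpg<->Qthree} and \Cref{thm:if-inoutside->mg<->pmg}. Your explicit handling of the cyclicity bookkeeping (ruling out the tree alternative because $G$, $\Gin{C,\pi}$ and $\Gout{C,\pi}$ all contain the square $C$) is a detail the paper leaves implicit, and it is handled correctly.
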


\begin{corollary}\label{prop:NrSquares}
	A planar median graph with $n$ vertices contains $\mc{O}(n)$ squares.
\end{corollary}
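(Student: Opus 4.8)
The plan is to prove the sharper quantitative statement that every \emph{cyclic} planar median graph on $n$ vertices has at most $2n-7$ squares (and a tree has none); this clearly implies the asserted $\mc{O}(n)$ bound. Throughout write $s(G)$ for the number of squares of $G$, and recall from \cref{thm:mpg<->Qthree} that a planar median graph is either a tree or a QS-graph, so only cyclic planar median graphs need to be considered, and each such graph, being bipartite, has $n\geq 4$.

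First I would pin down the two basic building blocks. For a cube $Q_3$, \cref{obs:Cube-Embedding} tells us that every one of its six faces is bounded by a square and that these are exactly its squares, so $s(Q_3)=6=|V(Q_3)|-2$. For a cyclic square-graph $S$, \cref{lem:square-faces-squaregraph} identifies the squares of $S$ with the inner boundaries in the embedding $\sigma$, and since distinct squares bound distinct inner faces, $s(S)$ is at most the number of inner faces of $S$. As $S$ is a simple, bipartite (\cref{lem:basic-median-props}~(1)), planar graph on at least $4$ vertices, it has at most $2|V(S)|-4$ edges, so Euler's formula bounds its number of inner faces by $|V(S)|-3$. Hence every basic QS-graph $B$ satisfies $s(B)\leq |V(B)|-2$.

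Next I would analyse how squares behave when one splits a median graph at a square. Let $G$ be any median graph with a square $C$ and fix any planar embedding $\pi$; then $G=\Gin{C,\pi}\cd{C}{C}\Gout{C,\pi}$ (cf.\ \cref{def:CD}), and because $G$ is $K_3$-free and bipartite (so $C$ has no chord), the intersection $\Gin{C,\pi}\cap\Gout{C,\pi}$ is exactly $C$. By \cref{lem:C4inGinGout} every square of $G$ is a square of $\Gin{C,\pi}$ or of $\Gout{C,\pi}$, and a square lying in both must be contained in $\Gin{C,\pi}\cap\Gout{C,\pi}=C$ and hence equal $C$; inclusion--exclusion then gives $s(G)=s(\Gin{C,\pi})+s(\Gout{C,\pi})-1$.

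With these ingredients the corollary follows by strong induction on $n=|V(G)|$ for cyclic planar median graphs. The base case $n=4$ forces $G\simeq C_4$, with $s(G)=1=2\cdot 4-7$. For $n\geq 5$ fix a planar embedding $\pi$ of $G$. If $G$ is a basic QS-graph, the first step gives $s(G)\leq n-2\leq 2n-7$. Otherwise \cref{lem:decomp} yields a square $C^*\subseteq G$ with $\Gin{C^*,\pi}\neq G$ and $\Gout{C^*,\pi}\neq G$; since these are induced subgraphs of the $K_3$-free graph $G$ containing the cycle $C^*$, both are cyclic planar median graphs (by \cref{thm:if-inoutside->mg<->pmg}) on strictly fewer than $n$ vertices, and $|V(\Gin{C^*,\pi})|+|V(\Gout{C^*,\pi})|=n+4$ because they overlap only in $C^*$. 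Applying the induction hypothesis to both pieces together with the square-count identity of the previous paragraph, $s(G)=s(\Gin{C^*,\pi})+s(\Gout{C^*,\pi})-1\leq(2|V(\Gin{C^*,\pi})|-7)+(2|V(\Gout{C^*,\pi})|-7)-1=2(n+4)-15=2n-7$, completing the induction. The only genuinely delicate point is the square-count identity $s(G)=s(\Gin{C,\pi})+s(\Gout{C,\pi})-1$: it rests on the fact — read off from \cref{lem:C4inGinGout} together with bipartiteness/$K_3$-freeness — that no square of a median graph can straddle a separating square $C$, so that its two sides share precisely the one square $C$ and nothing more; everything else is routine bookkeeping with Euler's formula and the inductive split furnished by \cref{lem:decomp} and \cref{thm:if-inoutside->mg<->pmg}.
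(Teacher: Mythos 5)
Your proof is correct, and it is a sharper, self-contained variant of the paper's argument rather than an identical one. The paper simply takes the ordered composition $G=G_1\circledast\cdots\circledast G_k$ into basic QS-graphs furnished by \cref{thm:mpg<->Qthree}, bounds the number of squares in each factor ($\mc{O}(n_i)$ for a square-graph by citing an Euler-type formula from the literature, and exactly $6$ for a cube), and sums over the factors. You instead run a strong induction that peels off one square-boundary at a time via \cref{lem:decomp} and \cref{thm:if-inoutside->mg<->pmg}, and you make explicit the bookkeeping the paper leaves implicit: by \cref{lem:C4inGinGout} and $K_3$-/$K_{2,3}$-freeness the two sides of a separating square $C$ share exactly the one square $C$ and exactly its four vertices, giving $s(G)=s(\Gin{C,\pi})+s(\Gout{C,\pi})-1$ and $|V(\Gin{C,\pi})|+|V(\Gout{C,\pi})|=n+4$. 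Combined with your direct Euler-formula bound of $|V(B)|-2$ squares for each basic block $B$, this buys an explicit constant (at most $2n-7$ squares for cyclic planar median graphs) and removes the external citation, at the cost of somewhat more verification; the two proofs rest on the same structural facts about the QS-decomposition, so neither is more general than the other.
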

\begin{proof}
	Let $G$ be a planar median graph. Hence, it is a QS-graph with
	composition $G = G_1 \circledast \dots \circledast G_k$ of basic
	QS-graphs. Let $n_i$ be the number of vertices in $G_i$, $1\leq i\leq k$.
	If a factor $G_i=(V_i,E_i)$ is a square-graph, then it contains
	$\mc{O}(n_i)$ squares (cf.\ \cite[Cor.\ 5]{klavzar1998euler}) and, if it
	is a cube it contains $6 < n_i=8$ squares.  Hence, each factor $G_i$ adds
	$\mc{O}(n_i)$ squares to $G$ and therefore, $G$ has
	$\mc{O}(\sum_i n_i)=\mc{O}(n)$ squares.
\end{proof}

\Cref{lem:basic-median-props,prop:square-from-median,lem:forb-planarE} can
be used to obtain the following interesting result.
\begin{proposition} \label{prop:pmg:convHull-sqG} Let $G$ be a planar
	median graph.  Then, the convex hull of each boundary is a square-graph.
\end{proposition}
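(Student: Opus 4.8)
The plan is to combine the forbidden-subgraph description of square-graphs among median graphs (\cref{prop:square-from-median}) with the ``inside/outside of a square'' machinery built around $\Gin{C,\pi}$ and $\Gout{C,\pi}$. Let $G$ be a planar median graph, fix any planar embedding, and let $B$ be a boundary of $G$; by \cref{obs:cycle-bound} we may choose the embedding $\pi$ so that $B$ is the \emph{outer} boundary of $G$. Put $M \coloneqq \mc{H}(B)$, the convex hull of $B$ with respect to $G$. Since $M$ is a convex (hence induced and isometric) subgraph of $G$, \cref{lem:basic-median-props}~(4) makes it a planar median graph. By \cref{prop:square-from-median} it therefore suffices to show that $M$ contains no cube $Q\lc{3}$, no book $K\lc{2}\Box K\lc{1,3}$, and no suspended cogwheel.

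The key observation is that every vertex and edge of $B$ stays incident to the outer face of $M$ with respect to the $(G,\pi)$-induced embedding $\pi_M$ of $M$: since $B$ is the outer boundary of $G$, everything of $G$ off $B$ lies in a bounded face, and deleting $V(G)\setminus V(M)$ only enlarges the outer face. This has two consequences. First, $B$ is contained in the outer boundary of $M$ (w.r.t.\ $\pi_M$). Second, for any square $C^*\subseteq M$ no vertex of $B$ lies in the inner face of $C^*$ --- a $4$-cycle drawn in a bounded region cannot separate a vertex from the outer face --- so every vertex of $B$ is almost-outside $C^*$ (the inner face of $C^*$ being the same whether computed in $G$ or in $M$). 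Since $G$ is $\{K\lc{3},K\lc{2,3}\}$-free (\cref{lem:basic-median-props}), \cref{lem:in-out-C4-hull} (applied with $G$ as ambient graph) promotes this to: every vertex of $M=\mc{H}(B)$ is almost-outside $C^*$. As $M$ is $K\lc{3}$-free, it follows that for every square $C^*\subseteq M$, deleting from $M$ the vertices and edges lying inside $C^*$ leaves $M$ unchanged.

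Now suppose for contradiction that $M$ is not a square-graph. By \cref{prop:square-from-median}, $M$ contains a cube, a book, or a suspended cogwheel, so $M$ is cyclic and contains a square. If $M$ is itself a cube, then by \cref{obs:Cube-Embedding} its outer boundary w.r.t.\ $\pi_M$ is a square $S$, whence $B\subseteq S$ by the first consequence above; since the $4$-cycle $S$ is convex in $G$ (\cref{lem:C4convex}, as $G$ is $K\lc{3}$- and $K\lc{2,3}$-free), we get $M=\mc{H}(B)\subseteq\mc{H}(S)=S\subsetneq M$, a contradiction. Otherwise $M$ is a cyclic planar median graph that is not a basic QS-graph, so \cref{lem:decomp} (applied with $M$ as ambient graph and embedding $\pi_M$) yields a square $C^*\subseteq M$ for which deleting from $M$ the material inside $C^*$ produces a \emph{proper} subgraph of $M$ --- contradicting the last sentence of the previous paragraph. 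Hence $M$ contains none of the three forbidden graphs, and \cref{prop:square-from-median} shows that $M$ is a square-graph.

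The step I expect to be the main obstacle is turning the ``key observation'' into a rigorous statement within the paper's formalism of $(G,\pi)$-induced embeddings: one has to verify that the successive deletions --- first $V(G)\setminus V(M)$, then the vertices and edges in bounded faces needed to isolate a square $C^*\subseteq M$ --- never move a vertex of the outer boundary $B$ into the inner face of a $4$-cycle. This is morally just ``removing interior material only enlarges the outer face'', but it is the one point that calls for a genuine topological argument rather than a citation; all remaining steps are straightforward applications of \cref{lem:in-out-C4-hull}, \cref{lem:decomp}, \cref{obs:Cube-Embedding}, \cref{lem:C4convex}, and \cref{prop:square-from-median}.
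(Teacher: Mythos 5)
Your proof is correct. It rests on the same two pillars as the paper's own argument: (i) a boundary of $G$ lies entirely on one side of every square, and \cref{lem:in-out-C4-hull} propagates this to its convex hull; (ii) a planar median graph that is not a square-graph must contain a square with vertices of the graph on both sides. The packaging, however, is genuinely different. The paper keeps the embedding arbitrary, extracts a forbidden subgraph $H\subseteq\mc{H}(G')$ via \cref{prop:square-from-median}, and produces the straddled square directly from \cref{lem:forb-planarE} (book, suspended cogwheel) or \cref{obs:Cube-Embedding} (cube), so one uniform contradiction closes all cases. You instead normalize the embedding so that $B$ is the outer boundary, prove the stronger global fact that nothing of $M=\mc{H}(B)$ lies strictly inside any square of $M$, and then invoke \cref{lem:decomp} applied to $M$ itself as a black box for pillar (ii). This buys a cleaner endgame --- the forbidden-subgraph case analysis is done once, inside \cref{lem:decomp}, rather than replayed --- but it costs you the separate case $M\simeq Q\lc{3}$ (excluded from \cref{lem:decomp} because a cube is a basic QS-graph), which you correctly dispatch via $M=\mc{H}(B)\subseteq\mc{H}(S)=S$, together with the extra topological bookkeeping you flag: that restricting $\pi$ to $M$ keeps $B$ incident with the outer face, and that ``inside $C^*$'' agrees for the $(G,\pi)$- and $(M,\pi_M)$-induced embeddings. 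Both points are sound (the latter because the inner region of a cycle depends only on the drawing of the cycle itself), and this topological step is no heavier than the one the paper uses tacitly when it asserts that a boundary must be almost-inside or almost-outside of every square.
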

\begin{proof}
	Let $G$ be a planar median graph together with some planar embedding
	$\pi$. Moreover, let $G'$ be some arbitrary boundary in $G$. Then,
	\Cref{lem:basic-median-props}~(4a) implies that the convex hull
	$\mc{H}(G')$ of $G'$ in $G$ is a median graph.
	
	We continue with showing that the convex hull $\mc{H}(G')$ is a
	square-graph. To this end, assume for contradiction that there is a
	subgraph $H\subseteq \mc{H}(G')$ that is isomorphic to a cube, a book, or
	a suspended cogwheel.
	
	If $H$ is isomorphic to a book or a suspended cogwheel, then
	\cref{lem:forb-planarE} implies that there is a square
	$C^*\subseteq H\subseteq \mc{H}(G')$ such that there is a vertex
	$v \in V(H)$ inside and a vertex $w \in V(H)$ outside of $C^*$ (w.r.t.\
	its $(G,\pi)$-induced embedding).  Since $G'$ is a boundary in $G$, it
	has to be almost-inside or almost-outside of $C^*$.  The latter two
	statements together with \cref{lem:in-out-C4-hull} imply that
	$v \in V(H)\subseteq V(\mc{H}(G'))$ or
	$w \in V(H)\subseteq V(\mc{H}(G'))$ cannot be a part of $\mc{H}(G')$; a
	contradiction.  Hence, $H$ cannot be a book or a suspended cogwheel, and
	thus $H$ must be a cube.  However, all vertices $v \in V(G')$ have to be
	in some face of that cube $Q\lc{3}$ (w.r.t.\ its $(G,\pi)$-induced
	embedding), which has to be bounded, in
	particular, by a square $C^*$ (cf.\ \cref{obs:Cube-Embedding}).  By
	\cref{lem:in-out-C4-hull}, none of the vertices
	$v' \in V(Q\lc{3})\setminus V(C^*)\ne \emptyset$ can be part of
	$\mc{H}(G')$; a contradiction.  Thus, $H$ cannot be a subgraph of
	$\mc{H}(G')$.
	
	This together with $\mc{H}(G')$ being a median graph and
	\Cref{prop:square-from-median} implies that $\mc{H}(G')$ is a
	square-graph.
\end{proof}
As an illustration of \Cref{prop:pmg:convHull-sqG}, we refer to
\cref{fig:exam-cubesquare}. Consider the graphs $G(1)$ and $G(7)$ with its
outer boundary; the $8$-cycle $C$.  Here, $\mc{H}(C)\simeq G(1)$ is a
square graph.  Note, however, not all boundaries are necessarily cycles
(cf.\ Fig.\ \ref{fig:planarDef}).

A graph $G$ is an \emph{amalgam} of two induced subgraphs $G_1$ and $G_2$
if their union is $G$ and their intersection is non-empty.  Every finite
median graph is obtained from a collection of hypercubes by successive
amalgamations of convex subgraphs \cite{isbell1980median,vV:84}.  Every
pseudo-median graph can be built up by successive amalgamations along
so-called gated subgraphs of certain Cartesian products of wheels, snakes
(i.e., path-like 2-trees), and complete graphs minus matchings
\cite{BH:91}. We now explain how our results also fit into this framework.  
To this end, we define square-boundary amalgamations as follows.  A graph $G$ is a
\emph{square-boundary amalgam (w.r.t. $C$)} of two induced subgraphs $G_1$
and $G_2$, if $G$ is an amalgam of $G_1$ and $G_1$ and the intersection
$G_1\cap G_2 \coloneqq C$ is a square-boundary of both $G_1$ and $G_2$.
\begin{obs} \label{obs:amal} $G$ is a square-boundary amalgam of two
	induced subgraphs $G_1$ and $G_2$ w.r.t.\ $C$ if and only of
	$G = G_1\cd{C}{C} G_2$.
\end{obs}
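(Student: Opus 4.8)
The plan is to prove both implications by directly unwinding \cref{def:CD} together with the definitions of \emph{amalgam} and \emph{square-boundary amalgam}; no structural graph theory is needed, only careful bookkeeping of vertex and edge sets. Throughout I identify isomorphic graphs, as the paper does when it writes equalities like $G=\Gin{C,\pi}\cd{C}{C}\Gout{C,\pi}$.

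For the forward direction, suppose $G$ is a square-boundary amalgam of the induced subgraphs $G_1$ and $G_2$ with $G_1\cap G_2 = C$ a square-boundary of both $G_1$ and $G_2$. Since the graph union of $G_1$ and $G_2$ is $G$, first I would record that $V(G)=V(G_1)\cup V(G_2)$ and $E(G)=E(G_1)\cup E(G_2)$; in particular every edge of $G$ has both endpoints in $V(G_1)$ or both in $V(G_2)$. Because \cref{def:CD} is stated for vertex-disjoint graphs while $G_1$ and $G_2$ genuinely overlap in $C$, I would next pass to relabelled vertex-disjoint copies $\widehat G_1$ and $\widehat G_2$ of $G_1$ and $G_2$, let $\widehat C_1\subseteq\widehat G_1$ and $\widehat C_2\subseteq\widehat G_2$ be the copies of $C$, and take $\varphi\colon\widehat C_1\to\widehat C_2$ to be the isomorphism induced by the two relabellings (so $\varphi$ sends a vertex of $\widehat C_1$ to its original in $C$ and then to the corresponding vertex of $\widehat C_2$). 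Composing $\widehat G_1$ and $\widehat G_2$ along $\varphi$ in the sense of \cref{def:CD} identifies $\widehat C_1$ with $\widehat C_2$ and nothing else, so the resulting vertex set is in canonical bijection with $V(G_1)\cup V(G_2)=V(G)$ and the edge set with $E(G_1)\cup E(G_2)=E(G)$; I would then check that this bijection is a graph isomorphism. Since $C$ is a square-boundary of each $G_i$, this composition is exactly the one written $G_1\cd{C}{C}G_2$, so $G=G_1\cd{C}{C}G_2$.

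For the converse, suppose $G=G_1\cd{C}{C}G_2$, $C$ a square-boundary of both $G_1$ and $G_2$. By \cref{def:CD} the composition identifies only the four vertices and four edges of $C$ in $G_1$ with their images in $G_2$, so the natural maps $G_1\hookrightarrow G$ and $G_2\hookrightarrow G$ are injective; I identify $G_1$ and $G_2$ with their images in $G$. Then I would verify three points. (i) $V(G)=V(G_1)\cup V(G_2)$ and $E(G)=E(G_1)\cup E(G_2)$, i.e.\ the graph union of $G_1$ and $G_2$ is $G$. (ii) $V(G_1)\cap V(G_2)=V(C)\ne\emptyset$ and $E(G_1)\cap E(G_2)=E(C)$, so $G_1\cap G_2=C$, a nonempty square-boundary of both. (iii) $G_1$ is an induced subgraph of $G$: if $u,v\in V(G_1)$ and $\{u,v\}\in E(G)$, then this edge comes from $E(G_1)$, and we are done, or from $E(G_2)$, in which case $u$ and $v$ are both identified vertices, hence $u,v\in V(C)$ and $\{u,v\}\in E(C)\subseteq E(G_1)$; the argument for $G_2$ is symmetric. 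Together (i)--(iii) are precisely the statement that $G$ is a square-boundary amalgam of $G_1$ and $G_2$ w.r.t.\ $C$.

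The argument is entirely routine, so I do not expect a genuine mathematical obstacle; the only point requiring care is notational, namely reconciling the vertex-disjointness convention of \cref{def:CD} with the fact that, viewed as subgraphs of $G$, the graphs $G_1$ and $G_2$ literally share the square $C$. This is handled once in the forward direction by passing to disjoint copies and choosing $\varphi$ appropriately, and in the converse by observing that the $\circledast$-identification collapses nothing beyond $C$, which is also what makes $G_1$ and $G_2$ \emph{induced} subgraphs of the composite.
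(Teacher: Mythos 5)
The paper states \cref{obs:amal} without proof, treating it as immediate from \cref{def:CD} and the definition of a square-boundary amalgam; your definitional unwinding is precisely the argument the authors leave implicit, and the forward direction (passing to disjoint copies and using the canonical isomorphism $\varphi$) is handled correctly. The one place in your converse where something beyond bookkeeping is used is step (iii): from $u,v\in V(C)$ and $\{u,v\}\in E(G_2)$ you conclude $\{u,v\}\in E(C)$, and this requires that $C$ have no chord in $G_2$ (symmetrically in $G_1$). A square-boundary can in general carry a chord --- a $4$-cycle with one diagonal drawn in its interior still bounds the outer face --- and gluing a plain $C_4$ onto such a graph along $C$ yields a composition $G_1\cd{C}{C}G_2$ in which the image of $G_1$ is \emph{not} an induced subgraph, so the equivalence as literally stated needs this chordlessness hypothesis. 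In the paper's setting the point is harmless, since every graph to which the observation is applied is a planar median graph or a piece of one, hence bipartite and $K_3$-free, so its squares are chordless (indeed convex, by \cref{lem:C4convex}); but your proof should state this assumption where it is used.
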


\Cref{thm:mpg<->Qthree} and \Cref{obs:amal} can be used to show the following
\begin{theorem} \label{thm:amalgam} A graph is a planar median graph if and
	only if it can be obtained from cubes and square-graphs by a sequence of
	square-boundary amalgamations.
\end{theorem}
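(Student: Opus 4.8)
The plan is to derive Theorem~\ref{thm:amalgam} from Theorem~\ref{thm:mpg<->Qthree} together with Observation~\ref{obs:amal}, which lets us pass freely between an application of $\circledast$ and a square-boundary amalgamation; the two implications are then handled separately, with the converse needing a short induction on the length of the amalgamation sequence plus the embedding-gluing step already used in the proof of Lemma~\ref{lem:QS-wellDef}.

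\emph{From a planar median graph to a sequence of amalgamations.} Let $G$ be a planar median graph. If $G$ is a tree, then $G$ is itself a square-graph, hence (trivially) obtained from cubes and square-graphs by the empty sequence of amalgamations. Otherwise $G$ is cyclic and Theorem~\ref{thm:mpg<->Qthree} gives that $G$ is a QS-graph, so by Definition~\ref{def:QS-graphs} it has an ordered composition $G=G(\ell)$ with $G(i)=G(i-1)\cd{C_{i-1}}{C_i}G_i$ for $2\le i\le\ell$, where each $G_i$ is a cube or a cyclic square-graph (thus, in particular, a cube or a square-graph) and $C_{i-1}$, $C_i$ are square-boundaries of $G(i-1)$, $G_i$, respectively. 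Applying Observation~\ref{obs:amal} at each step (after identifying $C_{i-1}$ with $C_i$ via the chosen isomorphism) shows that $G(i)$ is a square-boundary amalgam of $G(i-1)$ and $G_i$; hence the ordered composition exhibits $G$ as obtained from cubes and square-graphs by a sequence of square-boundary amalgamations.

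\emph{From a sequence of amalgamations to a planar median graph.} Suppose $G$ is obtained from cubes and square-graphs by a sequence of $m$ square-boundary amalgamations; I would induct on $m$. If $m=0$ then $G$ is a single cube or square-graph, and hence a planar median graph by Lemma~\ref{lem:square-graph:basic-proper}. If $m\ge1$, split off the \emph{last} amalgamation: by Observation~\ref{obs:amal} we may write $G=G_1\cd{C}{C}G_2$ where $C$ is a square-boundary of both $G_1$ and $G_2$, and where $G_1$ and $G_2$ are each obtained from a sub-collection of the building blocks by a strictly shorter sequence of square-boundary amalgamations (a sub-process of the given one), so that by the induction hypothesis $G_1$ and $G_2$ are planar median graphs. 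Because $C$ is a square-boundary of $G_1$, Observation~\ref{obs:cycle-bound} (applied to an embedding of $G_1$ for which $C$ bounds a face) yields a planar embedding of $G_1$ in which $C$ is an inner boundary; likewise a planar embedding of $G_2$ in which $C$ is the outer boundary. Gluing the two drawings exactly as $\kappa(\pi_C,\pi_{C'})$ is built in the proof of Lemma~\ref{lem:QS-wellDef} --- a scaled copy of the $G_2$-drawing is placed in the inner face of $G_1$ bounded by $C$, and the two copies of $C$ are identified --- gives a planar embedding $\pi$ of $G$ with $G_1=\Gout{C,\pi}$ and $G_2=\Gin{C,\pi}$. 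Since $G_1$ and $G_2$ are median graphs, Theorem~\ref{thm:if-inoutside->mg<->pmg} implies that $G$ is a median graph, and $\pi$ certifies that it is planar.

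\emph{Main obstacle.} The two translations via Observation~\ref{obs:amal} are routine; the delicate point is the induction in the converse. A ``sequence of square-boundary amalgamations'' is a binary-tree-shaped process rather than the strictly left-to-right ordered composition used to define QS-graphs, so I would peel off the last amalgamation into two sub-sequences instead of trying to linearize the construction; and I must re-run the embedding-gluing argument of Lemma~\ref{lem:QS-wellDef} for \emph{two arbitrary planar median factors} carrying only a square-boundary, rather than for the case of attaching a single basic QS-graph. That step, however, needs nothing beyond Observation~\ref{obs:cycle-bound} and the fact that a square-boundary is by definition a face boundary for some embedding, so it goes through verbatim. Finally, one should note the degenerate reading of the hypothesis: a tree has no square, so it cannot be a factor of an amalgamation, whence for $m\ge1$ every building block that is actually used is cyclic, and trees enter the statement only through the $m=0$ base case (as square-graphs).
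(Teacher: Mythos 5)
Your proof is correct. The forward direction coincides with the paper's: invoke \Cref{thm:mpg<->Qthree} to get an ordered composition of $G$ into basic QS-graphs and reread each $\circledast$-step as a square-boundary amalgamation via \Cref{obs:amal}. Where you genuinely diverge is the converse. The paper disposes of it in a single sentence, asserting that a graph obtained from cubes and square-graphs by square-boundary amalgamations ``must be a tree or a QS-graph'' and then citing \Cref{thm:mpg<->Qthree} again; this implicitly requires linearizing a possibly tree-shaped amalgamation process into the strictly left-to-right ordered composition that defines QS-graphs, a point the paper does not spell out. You instead induct on the number of amalgamations, peel off the last one as $G=G_1\cd{C}{C}G_2$, and show directly that the amalgam of two planar median graphs along a common square-boundary is again a planar median graph, by building the embedding $\kappa(\pi_C,\pi_{C'})$ as in the proof of \Cref{lem:QS-wellDef} so that $G_1=\Gout{C,\pi}$ and $G_2=\Gin{C,\pi}$, and then applying \Cref{thm:if-inoutside->mg<->pmg}. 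This route sidesteps the linearization issue entirely and handles the binary-tree-shaped reading of ``sequence of amalgamations'' that the paper's terse argument does not obviously cover; its only cost is re-running the embedding-gluing step for two arbitrary planar median factors rather than for a basic QS-graph attached to a partial composition, which, as you observe, needs nothing beyond \Cref{obs:cycle-bound} and the definition of a square-boundary. Your closing remark that trees cannot occur as factors once at least one amalgamation is performed is a detail the paper leaves implicit and is worth stating.
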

\begin{proof}
	Let $G$ be a planar median graph. By \Cref{thm:mpg<->Qthree}, $G$ is a
	tree or a QS-graph. If $G$ is a tree, then it is a square-graph.
	Otherwise, $G$ is cyclic and thus, by definition, there is an ordered
	composition
	$G=G\lc{1}\circledast G\lc{2} \circledast \cdots \circledast G_k$ of
	basic QS-graphs, that is, cubes or cyclic square-graphs.  In particular,
	the subgraphs $G_i\subseteq G$, $1\leq i\leq k$ are induced.  Hence, $G$
	is obtained from cubes and square-graphs by a sequence of square-boundary
	amalgamations. Conversely, if $G$ is obtained from cubes and
	square-graphs by a sequence of square-boundary amalgamations, the graph
	$G$ must be a tree or a QS-graph and thus, by \Cref{thm:mpg<->Qthree}, a
	planar median graph.
\end{proof}

\section{Fast Decomposition of Planar Median Graphs into an Ordered
	Sequence of Basic QS-graphs}
\label{sec:algo}

\cref{lem:decomp} immediately implies a recursive strategy to determine an
ordered composition of QS-graphs of a given planar median
graph. Importantly, it is not necessary to find forbidden subgraphs as in
the proof of \cref{lem:pmg->Qthree}, which we used in this proof to
properly apply the induction step.  First, we test if $G$ is a planar
median graph and, in the affirmative case, compute a planar embedding $\pi$
of $G$ and continue.  If $G$ is square-graph or a cube, we are done.
Otherwise, $G$ is not a basic QS-graph and thus, by \cref{lem:decomp},
there is a square $C \subseteq G$ such that $\Gin{C,\pi}\neq G$ and
$\Gout{C,\pi}\neq G$.  There are two cases, either (i) both $\Gin{C,\pi}$
and $\Gout{C,\pi}$ are basic QS-graphs or (ii) at least one of them is
not. In Case (i), we are done, since we found a decomposition of
$G = \Gin{C,\pi}\cd{C}{C} \Gout{C,\pi}$ into basic QS-graphs.  In Case
(ii), we recurse on the non-basic QS graph $\Gin{C,\pi}$ or $\Gout{C,\pi}$,
resp., and repeat the latter until all such squares have been examined.  In
this recursion, we must, however, determine for all remaining squares $C$
after we found a basic QS-graph $H'$ if $\Hin{C,\pi}\neq H$ and
$\Hout{C,\pi}\neq H$ where $H$ is obtained from $G$ by removing $H'$ except
for the square $C$, and, in particular, keep track of the order of the
chosen factors to obtain an ordered composition of the input graph.

To address these issues, we will design a 
non-recursive algorithm instead.  To this end, we will use a partial order
on the set of all squares of $G$ that is defined in term of
``almost-inside'' w.r.t.\ $(G,\pi)$ induced embedding
\begin{definition}
	A square $C'\subseteq G$ is \emph{almost-inside} a square $C\subseteq G$,
	in symbols $C\preceq_{G,\pi} C'$, if all vertices of $C'$ are
	almost-inside $C$ w.r.t.\ the $(G,\pi)$-induced embedding of $C$.  In
	particular, we write $C\prec_{G,\pi} C'$ if $C\preceq_{G,\pi} C'$ and
	$C\neq C'$.
\end{definition}
In the following let $\mathcal{S}(G)$ denote the set of all squares
contained in $G$.
\begin{lemma}
	For every
	$\pi$-embedded planar graph $G$, $(\mathcal{S}(G),\preceq_{G,\pi})$ is a partially ordered set.
\end{lemma}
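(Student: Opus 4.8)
The plan is to verify the three defining properties of a partial order for the relation $\preceq_{G,\pi}$ on $\mathcal{S}(G)$: reflexivity, antisymmetry, and transitivity. Reflexivity is immediate: every vertex of a square $C$ is a vertex of $C$, hence lies on $C$ and is therefore almost-inside $C$ by definition, so $C\preceq_{G,\pi} C$. The main conceptual work lies in antisymmetry and transitivity, both of which should follow from the Jordan curve theorem applied to the closed curves traced out by the squares in the $(G,\pi)$-induced embeddings.

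For antisymmetry, suppose $C\preceq_{G,\pi} C'$ and $C'\preceq_{G,\pi} C$ with $C\neq C'$. Then every vertex of $C'$ lies in the closed inner disk bounded by $C$, and every vertex of $C$ lies in the closed inner disk bounded by $C'$. I would argue that two distinct $4$-cycles each contained in the closed disk of the other must actually coincide: the curve $C'$ lies inside the closed Jordan region of $C$, so the inner region of $C'$ is a subregion of the inner region of $C$; symmetrically the inner region of $C$ is a subregion of that of $C'$; hence the two inner regions are equal, forcing the two boundary curves $C$ and $C'$ to be equal as point sets in the plane, and therefore equal as subgraphs (their vertices are exactly the graph-vertices lying on the curve). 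This contradicts $C\neq C'$, so antisymmetry holds. A small care point: I should note that a square is a simple closed curve in any planar embedding, so the Jordan curve theorem genuinely applies, and that the notion of ``inside'' for $C$ in the $(G,\pi)$-induced embedding of $C$ is exactly the bounded region of this Jordan curve.

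For transitivity, suppose $C\preceq_{G,\pi} C'$ and $C'\preceq_{G,\pi} C''$; I want $C\preceq_{G,\pi} C''$, i.e.\ every vertex of $C''$ is almost-inside $C$. Since every vertex of $C'$ lies in the closed inner disk $D_C$ of $C$, the whole curve $C'$ lies in $D_C$ (edges of $C'$ are drawn inside faces, and one checks they cannot leave $D_C$ without crossing $C$), hence the closed inner disk $D_{C'}$ of $C'$ is contained in $D_C$. Every vertex of $C''$ lies in $D_{C'}\subseteq D_C$, so every vertex of $C''$ is almost-inside $C$, giving $C\preceq_{G,\pi} C''$.

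The step I expect to be the main obstacle is making the topological containment of disks fully rigorous, specifically arguing that if all \emph{vertices} of $C'$ lie in the closed inner region of $C$ then the entire drawn curve of $C'$ (edges included) lies there, and hence the inner region of $C'$ is contained in that of $C$. This requires that an edge of $C'$, being drawn without crossing $C$ (by planarity of the embedding $\pi$) and having both endpoints in the closed region $D_C$, stays inside $D_C$ — which follows because the complement of $C$ has exactly two components and an arc from $D_C$ to $D_C$ avoiding $C$ cannot jump to the unbounded component. I would phrase this cleanly using \Cref{obs:cycle-bound} and the Jordan curve theorem, keeping the argument short since the underlying facts are standard; the routine verification that distinct squares with mutually-containing inner regions must coincide can be stated in one line.
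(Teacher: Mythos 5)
Your overall strategy---verify reflexivity, transitivity and antisymmetry directly by analysing how the squares nest in the plane embedding---is the same as the paper's, which argues in exactly these terms; your treatment of transitivity via nested closed disks is in fact more explicit than the paper's ``it is easy to see''. Reflexivity is fine.

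The step you yourself flagged as the main obstacle is, however, a genuine gap, and it cannot be closed for the statement as literally given. The inference ``all vertices of $C'$ lie in the closed inner region $\overline{D}_C$ of $C$, hence the entire drawn curve of $C'$ lies in $\overline{D}_C$'' fails when an edge of $C'$ has \emph{both} endpoints on $C$: such an edge meets $C$ only at its two endpoints, so its open interior lies entirely in one of the two components of the complement of $C$, and nothing forces that component to be the bounded one. Your justification (``an arc from $D_C$ to $D_C$ avoiding $C$ cannot jump to the unbounded component'') is valid only when at least one endpoint is strictly inside $C$. Concretely, take $G=K_4$ with any plane embedding: its three $4$-cycles pairwise share all four vertices, so for any two distinct ones both $C\preceq_{G,\pi}C'$ and $C'\preceq_{G,\pi}C$ hold vacuously (every vertex of one lies \emph{on} the other, hence is almost-inside it by definition), yet $C\neq C'$ and their inner regions differ; antisymmetry genuinely fails here, so your ``equal regions, hence equal curves'' conclusion cannot be reached. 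To be fair, the paper's own proof has the same blind spot---it asserts without justification that $C\preceq_{G,\pi}C'$ and $C\neq C'$ force some vertex of $C'$ to lie strictly inside $C$---and the lemma is only ever applied to planar median graphs, where $K\lc{3}$- and $K\lc{2,3}$-freeness imply that two distinct squares share at most one edge, so that at least two vertices of $C'$ are strictly inside $C$ and your disk-nesting argument then goes through. You should either add such a hypothesis explicitly or handle the case of edges of $C'$ with both endpoints on $C$ separately; as written, the argument (like the paper's) does not establish antisymmetry for arbitrary $\pi$-embedded planar graphs.
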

\begin{proof}
	In the following, the term ``(almost-)inside'' and ``outside'' refers to
	the $(G,\pi)$-induced embedding.  By definition, $\preceq_{G,\pi}$ is
	reflexive.  Moreover, if $C \preceq_{G,\pi} C' \preceq_{G,\pi} C''$, then
	all vertices of $C$ are inside or part of $C'$. The same applies for $C'$
	and $C''$. Now, it is easy to see that $\preceq_{G,\pi}$ is transitive
	(i.e., $C \preceq_{G,\pi} C''$).
	We continue with showing that $\preceq_{G,\pi}$ is anti-symmetric.  To
	this end, let $C,C'\in \mathcal{S}(G)$ such that $C\preceq_{G,\pi}C'$ and
	$C'\preceq_{G,\pi}C$.  Assume, for contradiction, that $C\neq C'$.  Since
	$C\preceq_{G,\pi}C'$ and $C\neq C'$, at least one vertex of $C$ must be
	located inside of $C'$ while all other vertices of $C$ are almost-inside
	of $C'$. But then, at least one vertex of $C'$ must be outside of $C$ and
	thus, $C'\not\preceq_{G,\pi}C$; a contradiction.  Therefore,
	$\preceq_{G,\pi}$ is anti-symmetric.  In summary,
	$(\mathcal{S}(G),\preceq_{G,\pi})$ is a partially ordered set.
\end{proof}

Next, we consider a condition for the nesting of squares in planar median
graphs.
\begin{lemma}\label{lem:1in}
	Let $G$ be a $\pi$-embedded planar median graph and let
	$C,C'\in \mathcal S(G)$.  If there is a vertex $v\in V(C)$ such that $v$
	is inside of $C'$ w.r.t.\ $(G,\pi)$-induced embedding, then
	$C\prec_{G,\pi} C'$, i.e., $C$ is almost-inside $C'$ w.r.t.\
	$(G,\pi)$-induced embedding.
\end{lemma}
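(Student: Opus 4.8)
The plan is to reduce the statement to \Cref{lem:iso->in/out-C4}. First I would record that a planar median graph $G$ is planar and $\{K\lc{3},K\lc{2,3}\}$-free: it is bipartite (hence $K\lc{3}$-free) and $K\lc{2,3}$-free by \Cref{lem:basic-median-props}~(1) and (2). I would also recall that every square of a median graph is an isometric cycle: by \Cref{lem:C4convex} every $4$-cycle of the $\{K\lc{3},K\lc{2,3}\}$-free graph $G$ is convex, and convex subgraphs are isometric. In particular the square $C$ is an isometric cycle of the planar $\pi$-embedded $\{K\lc{3},K\lc{2,3}\}$-free graph $G$, and $C'\subseteq G$ is a square, so the hypotheses of \Cref{lem:iso->in/out-C4} are satisfied with its ``$C$'' taken to be our square $C'$ and its ``$C'$'' taken to be our isometric cycle $C$.

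Next I would apply \Cref{lem:iso->in/out-C4} in this form: all vertices of $V(C)\setminus V(C')$ are \emph{either} all inside \emph{or} all outside of $C'$ w.r.t.\ the $(G,\pi)$-induced embedding of $C'$. Since the given vertex $v\in V(C)$ is inside of $C'$, in particular $v\notin V(C')$, so $v\in V(C)\setminus V(C')$; this rules out the ``outside'' alternative and forces every vertex of $V(C)\setminus V(C')$ to be inside $C'$. The remaining vertices of $C$, namely those in $V(C)\cap V(C')$, lie on $C'$ and are therefore almost-inside $C'$ by definition. Combining the two cases, every vertex of $C$ is almost-inside $C'$, i.e.\ $C\preceq_{G,\pi}C'$. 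Finally, since $v$ is (strictly) inside $C'$ we have $v\notin V(C')$ while $v\in V(C)$, so $C\neq C'$, and hence $C\prec_{G,\pi}C'$, as claimed.

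I do not expect a genuine obstacle here: the only points that need care are verifying the two applicability conditions for \Cref{lem:iso->in/out-C4} (planar and $\{K\lc{3},K\lc{2,3}\}$-free, which is immediate from \Cref{lem:basic-median-props}, and that a square of a median graph is isometric, which follows from \Cref{lem:C4convex}), and correctly matching up the roles of the two squares when invoking that lemma; everything else is bookkeeping with the definitions of ``inside'' and ``almost-inside''.
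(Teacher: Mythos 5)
Your proof is correct and follows essentially the same route as the paper: the paper argues by contradiction via Corollary~\ref{lem:C4inGinGout} (every square of a median graph lies entirely in $\Gin{C',\pi}$ or $\Gout{C',\pi}$), which is itself just Lemma~\ref{lem:iso->in/out-C4} combined with the fact that squares in median graphs are isometric — exactly the two ingredients you assemble directly. Your care in verifying the hypotheses (planarity, $\{K\lc{3},K\lc{2,3}\}$-freeness, and isometry of $C$ via Lemma~\ref{lem:C4convex}) and in noting $v\notin V(C')$ to get strictness is all sound.
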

\begin{proof}
	In the following, the term ``(almost-)inside'' and ``outside'' refers to
	the $(G,\pi)$-induced embedding. Let $C,C'\in \mathcal S(G)$ and suppose
	that there is a vertex $v\in V(C)$ such that $v$ is inside of $C'$.
	Assume, for contradiction, that $C$ is not almost-inside of $C'$.  By
	definition, there must be a vertex $w\in V(C)$ that is outside of $C'$.
	Hence, there is a square $C$ that is not entirely contained in
	$\Gin{C',\pi}$ or $\Gout{C',\pi}$; a contradiction to
	\cref{lem:C4inGinGout}.
\end{proof}

\begin{corollary}\label{cor:uniqueMaxS}
	Let $G$ be a $\pi$-embedded planar median graph.  Then, for all
	$C\in \mathcal{S}(G)$, there is a unique $\preceq_{G,\pi}$-maximal
	element $C'\in \mathcal{S}(G)$.
\end{corollary}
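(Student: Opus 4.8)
The plan is to prove the assertion in the form: for a fixed $C\in\mathcal{S}(G)$, the principal up-set $\mathcal{U}_C\coloneqq\{C'\in\mathcal{S}(G)\mid C\preceq_{G,\pi}C'\}$ has a unique $\preceq_{G,\pi}$-maximal element $C'$, which by transitivity of $\preceq_{G,\pi}$ is then also $\preceq_{G,\pi}$-maximal in all of $\mathcal{S}(G)$. Existence is immediate: $\mathcal{S}(G)$ is finite and $C\in\mathcal{U}_C\neq\emptyset$, so $\mathcal{U}_C$ has some maximal element $C'$; and if $C'\prec_{G,\pi}C''$ held for some $C''\in\mathcal{S}(G)$, then $C\preceq_{G,\pi}C'\prec_{G,\pi}C''$ would put $C''\in\mathcal{U}_C$, contradicting maximality of $C'$ in $\mathcal{U}_C$. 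So the whole content is uniqueness.

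For uniqueness I would fix two $\preceq_{G,\pi}$-maximal squares $C_1,C_2$ with $C\preceq_{G,\pi}C_1$ and $C\preceq_{G,\pi}C_2$ and show $C_1=C_2$. Since $G$ is a median graph it is bipartite (\Cref{lem:basic-median-props}) and hence $K_3$-free, so a $4$-cycle of $G$ is determined by its vertex set, and $\Gin{D,\pi}$ (resp.\ $\Gout{D,\pi}$) is induced by the vertices almost-inside (resp.\ almost-outside) a square $D$; thus for a square $E$ the relation $E\subseteq\Gin{D,\pi}$ just says that every vertex of $E$ is almost-inside $D$, and $E\subseteq\Gout{D,\pi}$ says that no vertex of $E$ is strictly inside $D$. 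Apply \Cref{lem:1in} to the squares $C_2$ and $C_1$: either some vertex of $C_2$ is strictly inside $C_1$, in which case \Cref{lem:1in} gives $C_2\preceq_{G,\pi}C_1$ and $\preceq_{G,\pi}$-maximality of $C_2$ yields $C_1=C_2$; or no vertex of $C_2$ is strictly inside $C_1$, i.e.\ $C_2\subseteq\Gout{C_1,\pi}$. (Alternatively one may obtain this dichotomy from \Cref{lem:C4inGinGout}.) So it remains to treat the case $C_2\subseteq\Gout{C_1,\pi}$.

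In that case, from $C\preceq_{G,\pi}C_1$ all four vertices of $C$ are almost-inside $C_1$; if all of them lay on $C_1$ then $V(C)=V(C_1)$, hence $C=C_1$, and $C\preceq_{G,\pi}C_2$ becomes $C_1\preceq_{G,\pi}C_2$, again forcing $C_1=C_2$ by maximality. Otherwise some $v\in V(C)$ is strictly inside $C_1$ w.r.t.\ the $(G,\pi)$-induced embedding of $C_1$. Because $C_2\subseteq\Gout{C_1,\pi}$, no vertex of $C_2$ is strictly inside $C_1$, so $v\notin V(C_2)$; and since $v$ is almost-inside $C_2$ (from $C\preceq_{G,\pi}C_2$), it follows that $v$ is in fact strictly inside $C_2$. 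Thus $\pi$ places $v$ in the open bounded region of both of the Jordan curves $C_1$ and $C_2$. Here is the geometric heart of the argument: the curve $C_2$ is disjoint from the open inner face of $C_1$ — every edge of $C_2$ is an edge of $G$ not drawn in that face, since $C_2\subseteq\Gout{C_1,\pi}$, and its vertices lie on $C_1$ or strictly outside $C_1$ — while the open inner face of $C_1$ is connected, contains the point $v$ lying inside $C_2$, and avoids $C_2$; hence by the Jordan curve theorem the inner face of $C_1$ is contained in the inner face of $C_2$. Taking closures, $C_1$ (the boundary of that region) lies in the closed inner face of $C_2$, so every vertex of $C_1$ is almost-inside $C_2$, i.e.\ $C_1\preceq_{G,\pi}C_2$, and maximality of $C_2$ gives $C_1=C_2$. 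This exhausts all cases, so $C_1=C_2$ and uniqueness follows.

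I expect the only non-routine step to be this last one — converting ``$C_2\subseteq\Gout{C_1,\pi}$'' together with ``$v$ strictly inside both $C_1$ and $C_2$'' into ``inner face of $C_1$ $\subseteq$ inner face of $C_2$'' — which rests on the elementary planarity fact that a connected open set meeting the interior of a Jordan curve but not the curve itself is contained in that interior. Everything else is bookkeeping with \Cref{lem:1in}, the descriptions of $\Gin{D,\pi}$ and $\Gout{D,\pi}$ for $K_3$-free graphs, transitivity and maximality for $\preceq_{G,\pi}$, and the fact that a $4$-cycle in a triangle-free graph is determined by its vertices.
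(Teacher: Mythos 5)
Your argument is correct. Bear in mind that the paper states \Cref{cor:uniqueMaxS} without any proof at all, presenting it as an immediate consequence of \Cref{lem:1in}; so there is no written proof to match, and what you have produced is the missing argument. Your two cases use exactly the tools the paper's setup points to: \Cref{lem:1in} (equivalently \Cref{lem:C4inGinGout}) disposes of the case in which some vertex of one candidate maximal square lies strictly inside the other, and the remaining case is settled by the planarity step you correctly single out as the heart of the matter --- the open inner face of $C_1$ is a connected set that avoids the curve $C_2$ and contains a point inside $C_2$, hence lies entirely inside $C_2$, so that $C_1\preceq_{G,\pi}C_2$. That nesting step is genuine content that the paper leaves implicit, and your justification of it (edges of $C_2$ lie in $\Gout{C_1,\pi}$ and the squares of a median graph are induced, so the curve $C_2$ cannot enter the open inner face of $C_1$) is sound. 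Two minor remarks. First, the paper's displayed definition of $\preceq_{G,\pi}$ as printed (``$C\preceq_{G,\pi}C'$ if all vertices of $C'$ are almost-inside $C$'') is the reverse of the convention actually used in \Cref{lem:1in} and in rooting $\mathscr{F}$, where the maximal element is the outermost square; you have silently adopted the latter convention, which is the one the corollary requires, but it would be worth stating that choice explicitly. Second, in your first case the conclusion of \Cref{lem:1in} is the strict relation $C_2\prec_{G,\pi}C_1$, which contradicts maximality of $C_2$ outright rather than merely forcing $C_1=C_2$; this is harmless but could be tightened.
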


\begin{figure}[t]
	\begin{center}
		\includegraphics[width=.85\textwidth]{algo2.pdf}
	\end{center}
	\caption{A $\pi$-embedded planar median graph $G$ and its forest
		$\mathscr{F}\coloneqq \mathscr{F}(G,\pi)$.  For better
		readability, set $G_i^{\mathrm{out}}(C) \coloneqq \Hout{C,\pi}$ and
		$G_i^{\mathrm{in}}(C) \coloneqq \Hin{C,\pi}$ for $H=G_i$.  The vertices
		of $\mathscr{F}$ correspond to the squares of $G$ and to the vertices
		of $G$ that are not part of a square.  The vertices of $\mathscr{F}$
		are horizontally aligned based on the level in which they occur
		(highlighted by gray lines). According to \cref{alg:TREE-compose}, we
		compute first the subgraph of $G$ that is induced by the vertices (of
		squares) on Level $0$ and we obtain the graph $G_1$.  Since for all
		squares $C$ of $G_1$ the equality $G_1^{\mathrm{out}}(C) = G_1$ holds.
		\cref{lem:decomp} implies that $G_1$ must be a basic QS-graph.  We then
		proceed in the next step to consider the subgraph $H$ of $G$ that is
		induced by the vertices (of squares) on Level $0$ and $1$ that are not
		isolated in $\mathscr{F}$ (cf.\ \Cref{alg:V2}).  Hence, $H$ is induced
		by the vertices $V(G)\setminus (V(C_7) \cup \{b,c,d\})$. The connected
		components of $H$ yield the factors $G_2,G_3$ and $G_4$. Here, $G_2$
		corresponds to the subtree of $\mathscr{F}$ consisting of the edge
		$\{C_1,C_2\}$. According to \cref{alg:TREE-compose}, $C_1$ is the cycle
		in level $0$ which is used to identify $G_1$ and $G_2$, and we obtain
		$G_1\cd{C_1}{C_1}G_2$. For $G_2$ we have $G_2^{\mathrm{in}}(C_1) = G_2$
		as well as $G_2^{\mathrm{out}}(C_2) = G_2$ and thus, by
		\cref{lem:decomp}, $G_2$ is a basic QS-graph.  Proceeding in this
		manner, we obtain the final decomposition
		$G = ((((G_1\cd{C_1}{C_1}G_2 )\cd{C_5}{C_5}G_3) \cd{C_8}{C_8}G_4)
		\cd{C_2}{C_2}G_5) \cd{C_9}{C_9}G_6$ of $G$ into basic QS-graphs.  }
	\label{fig:algo}
\end{figure}

Now, we define the rooted graph
$\mathscr{F}\coloneqq \mathscr{F}(G,\pi)$ for any given
$\pi$-embedded planar median graph $G=(V,E)$. Let
$W\coloneqq V\setminus \Big(\bigcup_{C\in \mc{S}(G)}V(C)\Big)$ be the set
of all vertices of $G$ that are not contained in a square.  The vertex set
of $\mathscr{F}$ is $V(\mathscr{F})=W\cup \mc{S}(G)$ and we add edges in
the following cases:
\begin{itemize}
	\item $\{C,C'\}\in E(\mathscr{F})$ with $C,C'\in \mathcal{S}(G)$ if and
	only if $C\prec_{G,\pi} C'$ and there is no $C''\in \mathcal{S}(G)$ such
	that $C\prec_{G,\pi} C'' \prec_{G,\pi} C'$, and
	\item $\{C,x\}\in E(\mathscr{F})$ with $C\in \mathcal{S}(G)$ and $x\in W$
	if and only if $x$ is inside $C$ and there is no $C'\in \mathcal{S}(G)$
	such that $C' \prec_{G,\pi} C$ and $x$ is inside $C'$.
\end{itemize}

To root this graph $\mathscr{F}$, observe first that if the outer
boundary of $G$ is a square $C$, then there must be a path from $C$ to all
other vertices in $\mathscr{F}$ and thus $C$ is the unique
$\preceq_{G,\pi}$-maximal element for all squares and vertices in
$V(\mathscr{F})$.  In this case, $\mathscr{F}$ must be connected and we
choose $C$ as its root.  If $\mathscr{F}$ is connected but
$\mathcal{S}(G)=\emptyset$, then $\mathscr{F}$ consists of a single vertex
$x\in W$ which is chosen as the root.  If $\mathscr{F}$ is disconnected,
then every connected component $T$ of $\mathscr{F}$ is either a single
vertex $x\in W$ in which case $x$ is chosen as the root of $T$, or $T$
contains vertices in $\mathcal{S}(G)$ in which case the unique
$\preceq_{G,\pi}$-maximal element of the squares that are contained in the
vertex set of $T$ is chosen as the root of $T$.  This unique
$\preceq_{G,\pi}$-maximal element exists due to \cref{cor:uniqueMaxS}.

\begin{lemma}
	Let $G$ be a $\pi$-embedded planar median graph.
	Then, $\mathscr{F}(G,\pi)$ is a rooted forest.  In particular,
	$ \mathscr{F}(G,\pi)$ is a tree if the outer boundary of $G$
	w.r.t.\ $\pi$ is a square.
\end{lemma}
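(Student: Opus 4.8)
The plan is to show that $\mathscr F\coloneqq\mathscr F(G,\pi)$ is exactly the rooted forest obtained by giving each vertex a well-defined \emph{parent} (pointing ``outwards''), with no parent assigned to the vertex chosen as a root. Concretely I would: (i) check that every $x\in W$ has at most one neighbour in $\mathscr F$; (ii) check that every square $C\in\mathcal S(G)$ has a unique neighbour lying below it with respect to $\preceq_{G,\pi}$, which will be its parent; (iii) conclude that every edge of $\mathscr F$ joins a vertex to its parent (and every such pair is an edge), so $\mathscr F$ is a forest whose roots are precisely the parentless vertices — exactly the ones the construction selects; and (iv) derive the ``tree'' statement from connectivity.

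For (i), fix $x\in W$ and put $\mathcal A_x=\{C\in\mathcal S(G): x\text{ is inside }C\text{ w.r.t.\ the }(G,\pi)\text{-induced embedding of }C\}$. The key claim is that $(\mathcal A_x,\preceq_{G,\pi})$ is a chain. Given $C_1,C_2\in\mathcal A_x$, \Cref{lem:iso->in/out-C4} (valid since a planar median graph is $\{K_3,K_{2,3}\}$-free by \Cref{lem:basic-median-props} and its squares are isometric cycles) shows that $V(C_1)\setminus V(C_2)$ lies entirely inside or entirely outside $C_2$, and symmetrically for $C_2$ against $C_1$; if some vertex of one lies inside the other, \Cref{lem:1in} gives comparability, so it remains to exclude the case where $C_1$ and $C_2$ each lie in the other's closed outer region. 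A routine Jordan-curve argument then shows that either one square encloses the other — which, since distinct squares in a median graph are convex $4$-cycles and hence share at most an edge, forces comparability again via \Cref{lem:1in} — or the two open inner faces bounded by $C_1$ and $C_2$ are disjoint, contradicting $x$ being inside both. So $\mathcal A_x$ is a finite chain; let $C_x$ be its $\preceq_{G,\pi}$-minimal element (the ``outermost square around $x$''). By the rule defining the edges $\{C,x\}$, the unique neighbour of $x$ in $\mathscr F$ is $C_x$ (and $x$ is isolated when $\mathcal A_x=\emptyset$); set $\mathrm{parent}(x)=C_x$ when it exists. In particular no $W$-vertex lies on a cycle of $\mathscr F$.

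For (ii) and (iii), let $C\in\mathcal S(G)$. By the edge rule its neighbours are: squares $C'$ covering $C$ with $C\prec_{G,\pi}C'$, squares $C''$ covered by $C$ with $C''\prec_{G,\pi}C$, and the $W$-vertices having $C$ as their outermost square. Define $\mathrm{parent}(C)$ to be the $\preceq_{G,\pi}$-maximal element of $\{C''\in\mathcal S(G):C''\prec_{G,\pi}C\}$, which exists and is unique by \Cref{cor:uniqueMaxS}; from the definition of covering one checks that $\mathrm{parent}(C)$ is exactly the unique neighbour of $C$ lying below it, and that every square-neighbour $C'$ of $C$ with $C\prec_{G,\pi}C'$ has $\mathrm{parent}(C')=C$. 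Hence every edge of $\mathscr F$ has the form $\{u,\mathrm{parent}(u)\}$, and conversely each such pair is an edge. Since $\mathrm{parent}(C)\prec_{G,\pi}C$ for every non-parentless square, any sequence $u,\mathrm{parent}(u),\mathrm{parent}^2(u),\dots$ is eventually a strictly $\prec_{G,\pi}$-decreasing chain of squares and therefore terminates ($G$ being finite); so the parent map has no cycles, $\mathscr F$ is the associated parent forest — a forest — and its roots are the parentless vertices, i.e.\ the $\preceq_{G,\pi}$-minimal square of each component (unique by \Cref{cor:uniqueMaxS}) or the isolated $W$-vertex of a singleton component, which is exactly what the construction picks. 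Thus $\mathscr F$ is a rooted forest.

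For (iv), suppose the outer boundary of $G$ w.r.t.\ $\pi$ is a square $C$. Then the outer face of $G$ contains no vertex of $G$ off $C$, so in the $(G,\pi)$-induced embedding of $C$ every other vertex of $G$ lies strictly inside $C$; hence $C\preceq_{G,\pi}C'$ for every $C'\in\mathcal S(G)$ and $C\in\mathcal A_x$ for every $x\in W$, so $C$ is the unique $\preceq_{G,\pi}$-minimal square and no $W$-vertex is isolated. Following parents from any vertex then reaches $C$, so $\mathscr F$ is connected, and a connected forest is a tree. I expect the genuine obstacle to be the structural claim behind step (i) (and, equivalently, \Cref{cor:uniqueMaxS}): marrying the median-graph dichotomy ``a square lies inside or outside a given square'' (\Cref{lem:C4inGinGout}, \Cref{lem:iso->in/out-C4}) with the planarity of nested Jordan curves — in particular handling squares that share an edge and verifying that incomparable squares bound disjoint open regions; once that is in place the rest is bookkeeping about which endpoint of each edge is the parent.
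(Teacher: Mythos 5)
Your proof is correct and rests on the same two ingredients as the paper's own argument: the covering-relation definition of the edges of $\mathscr{F}$, and the fact that two squares which both enclose a common vertex (or square) must be $\preceq_{G,\pi}$-comparable --- precisely the content of \Cref{lem:1in}, \Cref{cor:uniqueMaxS} and the Jordan-curve nesting argument you correctly flag as the crux. The packaging, however, differs. The paper argues by contradiction: it extends $\preceq_{G,\pi}$ to an order $\preceq_{G,\pi}^*$ on $\mathcal{S}(G)\cup W$, takes a hypothetical cycle in $\mathscr{F}$, passes to a $\preceq_{G,\pi}^*$-minimal vertex $v$ on that cycle, notes that its two cycle-neighbours must be squares $C\neq C'$ that are incomparable (otherwise one of the two incident edges would violate the covering condition), and then derives comparability from the fact that $v$ lies inside both --- the same nesting fact you isolate in your step (i). You instead exhibit $\mathscr{F}$ directly as the graph of a parent function that is strictly monotone with respect to $\prec_{G,\pi}$, which yields acyclicity for free and makes the identification of the roots, and hence the ``tree'' statement, fall out of the construction rather than requiring a separate appeal to \Cref{cor:uniqueMaxS}; your treatment of the $W$-vertices (showing $\mathcal{A}_x$ is a chain, so each has at most one neighbour) is also more explicit than the paper's, which handles them only implicitly through $\preceq_{G,\pi}^*$. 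One cosmetic caveat: you call the outermost square of a component $\preceq_{G,\pi}$-minimal where the paper calls it maximal; this is not an error on your part but an orientation ambiguity inherited from the paper itself (the displayed definition of $C\preceq_{G,\pi}C'$ and the gloss in \Cref{lem:1in} read the relation in opposite directions), and your argument is insensitive to which convention is fixed.
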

\begin{proof}
	By construction and the arguments preceding this lemma,
	$\mathscr{F}\coloneqq \mathscr{F}(G,\pi)$ is a rooted graph, i.e., all
	its connected components $T$ are rooted at the unique
	$\preceq_{G,\pi}$-maximal element of the squares in $T$ or, in case
	$V(T)=\{x\}\subseteq W$, $T$ is rooted at $x$.  For simplicity, we extend
	in this proof the partial order $\preceq_{G,\pi}$ $\mathcal{S}(G)$ to an
	order $\preceq_{G,\pi}^*$ on $\mathcal{S}(G)\cup W$ by putting
	$C \preceq_{G,\pi}^* C'$ whenever $C \preceq_{G,\pi} C'$ for all
	$C,C\in \mathcal{S}(G)$ and $x \preceq_{G,\pi}^* C$ for all edges
	$\{C,x\}\in E(\mathscr{F})$ with $C\in \mathcal{S}(G)$ and $x\in W$. It
	is easy to verify that $(\mathcal{S}(G)\cup W, \preceq_{G,\pi}^*)$
	remains a partially ordered set.
	
	Now, we assume for contradiction that $\mathscr{F}$ is not a forest.
	Hence, it must contain a cycle. Let $T$ be a connected component 
	that contains a cycle $C_T$. Since $\preceq_{G,\pi}^*$ is a
	partial order on the set $\mathcal S(G)\cup W$ of all squares of $G$
	and all vertices not contained in squares, the cycle $C_T$
	must contain a vertex $v\in V(T)$, such that $v \preceq_{G,\pi}^* w$ for
	all $w\in C_T$. Note, $v$ corresponds either to a square in $G$
	or a vertex that is not contained in a square. 
	Now, consider the two vertices $w$ and $w'$ that are
	adjacent to $v$ in $C_T$. Since $v \preceq_{G,\pi}^* w,w'$, the vertices
	$w$ and $w'$ coincide, by definition of $\preceq_{G,\pi}$, with two
	squares $C$ and $C'$ of $G$, respectively. Furthermore, we have
	$C\neq C'$.
	
	However, $C\prec_{G,\pi}^* C'$ is not possible since, in this case, the
	edge $\{C',v\}$ would not exist in $\mathscr{F}$, by definition.
	Similarly, $C'\prec_{G,\pi}^* C$ is not possible.  However, all vertices of
	one square, say $C$, must be almost-inside $C'$, as otherwise, $C$ is not
	entirely contained in $\Gin{C',\pi}$ and $\Gout{C',\pi}$ and we would
	obtain a contradiction to \cref{lem:C4inGinGout}.  But then,
	$C\prec_{G,\pi}^* C'$; a contradiction.  Hence, $\mathscr{F}$ cannot
	contain cycles and is therefore, a forest.
	
	By the arguments preceding this lemma, if the outer boundary of $G$ is a
	square $C$, then there must be a path from $C$ to all other vertices in
	$\mathscr{F}$ and thus, $\mathscr{F}$ is a tree. By construction, this
	tree is rooted at $C$.
\end{proof}

Since $ \mathscr{F} = \mathscr{F}(G,\preceq_{G,\pi})$ is a forest and based
on the definition of edges $\{C,x\}\in E(T)$ with $C\in \mathcal{S}(G)$ and
$x\in W\coloneqq V\setminus \Big(\bigcup_{C\in \mc{S}(G)}V(C)\Big)$, every
$x\in W$ in $\mathscr{F}$ must be a leaf (i.e., it has degree one in
$\mathscr{F}$) or a singleton (i.e., it is an isolated vertex in
$\mathscr{F}$).

Note that the structure of $\mathscr{F}$ does not completely determine the
structure of $G$, since it only accounts for the ``hierarchy'' of the
nested squares.  As an example, consider
$\mathscr{F}(G,\preceq_{G,\pi})= \mathscr{F}(G',\preceq_{G',\pi'}) =
(\{C_1,C_2\},\emptyset)$ for the graph $G$, resp., $G'$, where all vertices
are located at the outer boundary and where $G$, resp., $G'$, consists
precisely of two squares identified on a single vertex, resp., identified
on a single edge.  The forest $\mathscr{F}$, however, does determine
whether or not $\Gin{C,\pi}\neq G$ for the squares $C\subseteq G$, since
$\Gin{C,\pi}\neq G$ if and only if there are vertices that are contained
outside of $C$ and thus, there must be vertices or squares on the same
level of $C$ in $\mathscr{F}$ or squares above the level of $C$ in the
connected component $T$ of in $\mathscr{F}$ that contains $C$.  In a
similar way, one can determine whether or not $\Gout{C,\pi}\neq G$.

The latter observation can also be applied to subgraphs of $G$ in order to
find basic QS-graphs as follows.  Let us consider a connected induced
subgraph $H\subseteq G$ where $H$ contains only squares and vertices of $G$
that are located in level $i$ and $i+1$ but none of the squares and
vertices of other levels $j$ with $j<i$ and $j>i+1$. Then, for every square
$C$ in $H$ from level $i$ the equality $\Hin{C,\pi} =H$ holds and for every
square $C$ in $H$ from level $i+1$ the equality $\Hout{C,\pi} =H$ holds.  Hence,
for all squares $C\subseteq H$ either $\Hin{C,\pi} =H$ or
$\Hout{C,\pi} =H$.  By \cref{lem:decomp}, $H$ is a basic QS-graph, provided
that $H$ is a median graph (a property that is always satisfied as shown in
the proof of \cref{lem:alg-correct}).  Hence, in order to find a
composition of a planar median graph into basis QS-graphs, we traverse
$\mathscr{F}$ in top-down fashion from level to level and, in principle,
use the connected components of the subgraphs of $G$ that are determined by
the vertices $x$ and squares $C$ on level $i$ and $i+1$ as basic QS-graphs,
see \cref{fig:algo} for an illustrative example. The pseudocode of this
approach is summarized in \Cref{alg:TREE-compose}.

\begin{algorithm}[tbp]
	\caption{\texttt{Ordered Composition of Planar Median Graphs into basic
			QS-graphs.}}
	\label{alg:TREE-compose}
	\begin{algorithmic}[1]
		\Require Graph $G=(V,E)$
		\Ensure  Ordered composition
		$G = (G_1\cd{C^*_i}{C^*_i} G_{2}) .. )\cd{C^*_k}{C^*_k} G_k$
		of basis QS-graphs, if $G$ is a planar median graph and otherwise,
		return ``\texttt{false}''
		\If{$G$ is not planar or a median graph} \label{alg:check1} 
		\Return ``\texttt{false}''
		\ElsIf{$G$ is a square or a tree}\label{alg:check2} 
		\Return $G$ 
		\Else \ Compute planar embedding $\pi$ of $G$
		\label{alg:pi} 
		\EndIf
		\State Compute forest
		$\mathscr{F}\coloneqq \mathscr{F}(G,\pi)$  
		\label{alg:tree}
		\State $j\gets 1$
		\If{$\mathscr{F}$ is disconnected}  \label{alg:if-disc}
		\State  $V'\gets$  subset of vertices of $G$  corresponding to squares and vertices  in $\mathscr{F}$ on level $0$ \label{alg:V1}
		\State $G_j\gets G[V']$	and $j\gets j+1$ 
		\EndIf
		\For{ $i = 1$ to  $\mathit{last\_level}$ of $\mathscr{F}$} \label{alg:for1}
		\State $V'\gets$ subset of vertices of $G$ corresponding to non-isolated squares and vertices in  $\mathscr{F}$ on level $i-1$ and $i$ \label{alg:V2}
		\For{all connected components $H$ in $G[V']$} 
		\State $G_j\gets H$ \label{alg:noS1} 
		\State $C_j\gets$ square from level $i-1$ \label{alg:getS}
		\State $j\gets j+1$ \label{alg:noS2} 
		\EndFor
		\EndFor
		\State \Return
		$G = (G_{1}\cd{C_2}{C_2} G_{2}) \dots )\cd{C_{j-1}}{C_{j-1}} G_{j-1}$	
	\end{algorithmic}
\end{algorithm}

\begin{lemma}\label{lem:alg-correct}
	\Cref{alg:TREE-compose} determines whether a graph $G$ is a planar median
	graph and, in the affirmative case, it returns an ordered decomposition
	of $G$ into basic QS-graphs. Unless $G$ is a square, none of the
	factors is the unit element $C_4$.
\end{lemma}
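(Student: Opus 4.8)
The plan is to verify the three assertions of the lemma separately: (i) that the test in Line~\ref{alg:check1} is a correct decision procedure; (ii) that, when it succeeds, the list of graphs returned is a genuine ordered $\circledast$-composition of basic QS-graphs equal to $G$; and (iii) that, except when $G$ itself is a square, no returned factor is the unit $C_4$.

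For (i) I would simply invoke the known recognition of planar median graphs (equivalently, \Cref{thm:planar-mg<->iso-cycles}): Line~\ref{alg:check1} returns \texttt{false} exactly when $G$ is not a planar median graph, so from now on assume $G$ is a planar median graph. If $G$ is a square or a tree, Line~\ref{alg:check2} returns $G$ itself; this is the correct output, since a square is already a basic QS-graph (and coincides with the unit $C_4$, which is the announced exception) while a tree is a square-graph admitting no further decomposition and, being acyclic, is not $C_4$. It remains to treat $G$ cyclic, neither a square nor a tree, so that the algorithm computes $\pi$ and $\mathscr{F}=\mathscr{F}(G,\pi)$.

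The heart of (ii) is to show that each factor $G_m$ the algorithm produces --- a connected induced subgraph $H=G[V']$ all of whose vertices lie on (essentially) two consecutive levels $\ell-1,\ell$ of $\mathscr{F}$ --- is a basic QS-graph, and that the recorded squares $C_m$ reassemble $G$. For the first point, recall from the discussion preceding the lemma that for every square $C$ of such an $H$ one has $\Hin{C,\pi}=H$ when $C$ sits on level $\ell-1$ and $\Hout{C,\pi}=H$ when it sits on level $\ell$; hence no square of $H$ violates both equalities, so by the contrapositive of \Cref{lem:decomp} it suffices to prove that $H$ is a cyclic planar median graph in order to conclude that $H$ is basic. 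Cyclicity follows because a connected component of $G[V']$ that meets any square of $G$ must contain that whole square (squares of a median graph are convex, by \Cref{lem:C4convex} with \Cref{lem:basic-median-props}~(1),(2)), so a component cannot consist only of vertices outside all squares; and $H$ being a planar median graph I would derive by exhibiting it as a convex subgraph of $G$: the set of vertices of $G$ of level at most $\ell$ is exactly the set of vertices lying in the interior of no level-$\ell$ square, i.e.\ $\bigcap_{C}\Gout{C,\pi}$ over the level-$\ell$ squares $C$, which is convex by \Cref{lem:in-out-C4}; within this median graph (\Cref{lem:basic-median-props}~(4)), $H$ is a connected component of the deletion of all vertices of level at most $\ell-2$, and using \Cref{lem:iso->in/out-C4} and \Cref{lem:C4inGinGout} one checks that such a component is again cut out by intersecting with finitely many $\Gin{\cdot,\pi}$ and $\Gout{\cdot,\pi}$, hence is convex, hence median.

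For the reassembly the relevant facts are $G=\Gin{C,\pi}\cd{C}{C}\Gout{C,\pi}$ (noted after \Cref{def:CD}) and that $\mathscr{F}$ organises $G$ level by level: every vertex appears in the factor produced at its level, two factors produced from consecutive levels overlap precisely in the level-$(\ell-1)$ square(s) recorded as $C_m$, and each such square is an outer boundary of the deeper factor and, by \Cref{obs:cycle-bound}, an inner boundary of the shallower one, so the gluing squares are square-boundaries in the running composition and in the new factor. An induction on $m$ over the order of production then gives that $G(m):=G_1\circledast\cdots\circledast G_m$ is well-defined with $G(m)=G(m-1)\cd{C_m}{C_m}G_m$, and after the last factor $G(j-1)=G$, matching the return statement. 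Finally for (iii): a factor equal to $C_4$ would be a single square $C$ of $G$ forming its own connected component of $G[V']$, which is impossible since $G$ is connected, $C\neq G$, and the $G$-neighbours of a square's vertices again lie on the same pair of levels and hence in $V'$; and the factor produced in Line~\ref{alg:V1} in the disconnected case is not $C_4$ either, since then $\mathscr{F}$ has at least two roots and level $0$ spans more than four vertices. Hence the only returned $C_4$ occurs when $G$ is a square. I expect the main obstacle to be the median/convexity claim for $H$: writing $V(H)$ cleanly as an intersection of ``inside/outside'' subgraphs is delicate when $H$ straddles several sibling squares on the same level, and making precise that the connected components of $G[V']$ genuinely isolate basic pieces --- where the partial-order structure of $\mathscr{F}$ and \Cref{lem:C4inGinGout} do the work --- is the part that needs real care.
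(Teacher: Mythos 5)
Your proposal is correct and follows essentially the same route as the paper: express each factor $H=G[V']$ as an intersection of subgraphs of the form $\Gin{\cdot,\pi}$ and $\Gout{\cdot,\pi}$, invoke \Cref{lem:in-out-C4} for convexity and \Cref{lem:basic-median-props}~(4b) for medianness, and then apply the contrapositive of \Cref{lem:decomp} using that every square $C$ of $H$ satisfies $\Hin{C,\pi}=H$ or $\Hout{C,\pi}=H$, before reassembling via the shared level-$(\ell-1)$ squares. Your explicit attention to the cyclicity of each factor (needed to apply \Cref{lem:decomp}) is a small refinement the paper leaves implicit, but it does not change the argument.
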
	
\begin{proof}
	Let $G$ be an arbitrary graph. \Cref{alg:check1} ensures that $G$ is
	a planar median graph and we compute in \Cref{alg:pi} a respective
	planar embedding of $G$.  In particular, \Cref{alg:check2} ensures
	that if $G$  is a tree or a square, then $G$ is returned
	and we obtain the trivial composition $G = G$.  Else, $G$ is a cyclic
	planar median graph having at least five vertices. As argued above, the
	forest $\mathscr{F}$ computed in \Cref{alg:tree} is well-defined.
	For better readability, we use in the following the notation $\GinS(*)$
	and $\GoutS(*)$ instead of $\Gin{*,\pi}$ and $\Gout{*,\pi}$, respectively.
	
	Assume that $\mathscr{F}$ is disconnected (\Cref{alg:if-disc}), then the
	outer boundary of $G$ cannot be a square. In particular, $\mathscr{F}$
	must have at least two connected components.  By definition, all
	connected components in $\mathscr{F}$ are trees whose vertices correspond
	to single vertices in $G$ or to squares $C\subseteq G$.  We collect in
	$V'$ all such vertices of $G$ and the vertices of these squares of $G$
	that are in level $0$ of $\mathscr{F}$ (\Cref{alg:V1}) and put
	$G_1 = G[V']$. Let $C_1,\dots,C_k$ be the respective squares that are in
	level $0$ of $\mathscr{F}$. Note, since $G$ is a cyclic median graph, at
	least one such square must exist. 
	Recall that $G$ is planar and $\{K_3,K_{2,3}\}$-free.
	By construction, $G_1 = \cap_{i=1}^k \GoutS(C_i)$, and by \cref{lem:in-out-C4}, $G_1$ is a
	convex subgraph of $G$.
	\cref{lem:basic-median-props}~(4b) implies that $G_1$ is a median graph.
	This, together with the fact that $G^{\mathrm{out}}_1(C)=G_1$ for all squares $C\subseteq G_1$ 
	and \cref{lem:decomp}, implies that  $G_1$ is a
	basic QS-graphs.
	
	Now, we proceed on all levels $i = 1$ to the last level of $\mathscr{F}$
	(\Cref{alg:for1}), which covers also the case that $\mathscr{F}$ is
	connected.  Let us assume we are in some step $i$. In this case, we
	consider the squares and vertices in level $i-1$ and $i$.  We collect in
	$V'$ all such vertices $v$ of $G$ and the vertices of the squares of $G$
	that are in level $i$ and $i-1$ of $\mathscr{F}$ (\Cref{alg:V2}). 
	Now, let $H$ be some connected component of $G[V']$, and
	consider the respective connected of component $T$ in $\mathscr{F}$ that
	corresponds to $H$.  The vertex in level $i-1$ of $T$ must correspond to
	a square $C$ for which all vertices on level $i$ in $T$ are almost-inside
	$C$.  We set $G_j \coloneqq H$. Since $C$ is an outer boundary of $G_j$, we have
	$G^{\mathrm{in}}_j(C) =G_j$.
	All other squares $C'\neq C$ of $G_j$ must
	be inner squares that satisfy $G^{\mathrm{out}}_j(C')=G_j$ by
	construction.
	In case that $G_j$ is a planar median graph, we can apply the
	contraposition of \cref{lem:decomp} to conclude that $G_j$ must be a
	basic QS-graphs.  Clearly, $G_j$ is planar. 
	Thus, it remains to show that
	$G_j$ is a median graph.  
	Recall that $G$ is planar and $\{K_3,K_{2,3}\}$-free.  Let $C'_1,\dots,C'_k$ be the squares
	of $G_j$ that correspond to the vertices of $T$ in level $i$.  By
	construction, $G_j = \GinS(C)\bigcap (\cap_{i=1}^k \GoutS(C'_i))$ and
	\cref{lem:in-out-C4} implies that $G_j$ is a convex subgraph of $G$.
	By \cref{lem:basic-median-props}~(4b), $G_j$ is a median graph.
	By the aforementioned arguments, $G_j$ is a basic QS-graph.
	
	Thus, all graphs $G_j$ computed by \cref{alg:TREE-compose} are basic
	QS-graphs.  We traverse the connected components of $G[V']$ in Step $i$
	according to the subtrees in $\mathscr{F}$ with vertices in level $i-1$
	and $i$. Since we consider in \Cref{alg:V2} only vertices that
	correspond to squares and vertices that are not isolated in $\mathscr{F}$
	on level $i-1$ and $i$, for every connected component
	$G_j\subseteq G[V']$ that is examined in Step $i$ in 
	\Cref{alg:noS1}-\ref{alg:noS2}, the graph $G_j$ is based on the squares
	and vertices that correspond to \emph{adjacent} vertices in $\mathscr{F}$
	that are in level $i-1$ and $i$.  Let $x$ be a vertex in $\mathscr{F}$ on
	level $i$ and assume that $x$ corresponds to the square $C_x\subseteq G$
	in $G_j$.  Moreover, let $V''$ be the set of all such vertices computed
	in Step $i+1$ in \Cref{alg:V2}.  The square $C_x$ will be part of
	some connected component $G_{j'}$ of $G[V'']$. Note, $G_{j'}$ cannot be a
	square, since we consider on level $i$ and $i+1$ non-isolated vertices in
	$\mathscr{F}$.  Otherwise, $G_{j'}$ is not the unit element and all
	remaining squares and vertices distinct from $C_x$ are the adjacent
	vertices of $x$ in $\mathscr{F}$ in level $i+1$.  We set $C_{j'} = C_x$
	(\Cref{alg:getS}).  Since $C_{j'} = C_x$ is contained in the previous
	factor $G_j$ we can ensure that $(\dots) \cd{C_{j'}}{C_{j'}} G_{j'}$ is
	well-defined in each step. The latter arguments also imply that only
	factors that are distinct from the unit-element are used.
\end{proof}

Let $G$ be an outer-planar median graph and $\pi$ be a planar embedding such
that  all vertices of $G$ are incident with the outer face.  Hence,  
$G$ is a tree, a square, or $\mathscr{F}(G,\pi)$ contains only vertices that are located at
level $0$ of $\mathscr{F}(G,\pi)$. In this case, either the
graph $G$ is returned in \Cref{alg:check2} or the \emph{if}-condition
in \Cref{alg:if-disc} is executed and, afterwards, $G$ is immediately
returned. In both cases, \cref{alg:TREE-compose} returns the basic QS-graph
$G$. This, together with the fact that cubes are not outer-planar, implies
\begin{proposition}
	Every outer-planar median graph is a square-graph. 
\end{proposition}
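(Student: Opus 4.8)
The plan is to finish the roadmap sketched in the paragraph immediately preceding the statement: first justify that, for a planar embedding $\pi$ of $G$ with $\mathring{V}(G)=\emptyset$, the forest $\mathscr{F}(G,\pi)$ has all of its vertices at level $0$ (so that \Cref{alg:TREE-compose} returns $G$ as a single basic QS-graph), and then deliver the punch line that a cube is not outer-planar. For the first point I would argue by contradiction: a vertex of $\mathscr{F}(G,\pi)$ at level $\geq 1$ has a parent, so by the definition of the edge set of $\mathscr{F}$ it is either a vertex $x\in W$ lying strictly inside some square $C'\in\mathcal{S}(G)$, or a square $C$ with $C'\prec_{G,\pi}C$ for some square $C'$; in the latter case at least one vertex of $C$ lies strictly inside $C'$ (otherwise $V(C)=V(C')$, forcing $C=C'$ since $G$ is bipartite, hence $K_3$-free and $C'$ has no chord). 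In either case some vertex of $G$ lies in the bounded inner face of a cycle of $G$ w.r.t.\ the $(G,\pi)$-induced embedding, whereas the outer face of $G$ is unbounded and therefore lies outside that cycle; thus this vertex is incident only with inner faces, contradicting $\mathring{V}(G)=\emptyset$. Hence $\mathscr{F}(G,\pi)$ is edgeless.

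Next I would trace \Cref{alg:TREE-compose}. If $G$ is a tree or a $C_4$ it is returned in \Cref{alg:check2}, and both are square-graphs, so we are done; otherwise $G$ is cyclic with at least five vertices, so $\mathcal{S}(G)\neq\emptyset$. Since $\mathscr{F}(G,\pi)$ is edgeless it cannot be a single vertex (that would force $\mathcal S(G)=\{C\}$, $W=\emptyset$, hence $V(G)=V(C)$ and $G=C_4$ by $K_3$-freeness), so it is disconnected and the branch at \Cref{alg:if-disc} is entered. Because every vertex and every square of $G$ sits at level $0$, the set $V'$ computed there is all of $V(G)$, so $G_1=G[V']=G$, and the for-loop over levels $1$ through the last level, which is $0$, does not execute; the algorithm returns $G=G_1$. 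By \Cref{lem:alg-correct}, $G_1$ is a basic QS-graph, hence by \Cref{def:QS-graphs}\,\textbf{(Q1)} either a cube $Q_3$ or a cyclic square-graph.

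It remains to exclude $G\simeq Q_3$. By \Cref{thm:3c-p} a cube is uniquely embeddable on $\mathbb{S}^2$, and by \Cref{obs:Cube-Embedding} every face of a cube, in particular its outer face, is bounded by a square, so only $4$ of the $8$ vertices of $Q_3$ are incident with the outer face and $\mathring{V}(Q_3)\neq\emptyset$. Hence $Q_3$ is not outer-planar, so $G\not\simeq Q_3$, and $G$ is a cyclic square-graph. Combining this with the tree and $C_4$ cases, $G$ is a square-graph in all cases.

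The only real obstacle I foresee is the first step, namely turning the hypothesis ``outer-planar'' into the geometric fact that no vertex of $G$ lies strictly inside a cycle of $G$ w.r.t.\ the $(G,\pi)$-induced embedding; everything afterwards is bookkeeping against the algorithm. A cleaner, self-contained alternative that avoids \Cref{alg:TREE-compose} altogether is also available: by \Cref{prop:square-from-median} it suffices to show the median graph $G$ contains no cube, no book, and no suspended cogwheel as a subgraph. The cube is not outer-planar by the embedding argument above, and a book or a suspended cogwheel is not outer-planar because \Cref{lem:forb-planarE} produces, for \emph{every} planar embedding, a square $C^*$ with $\Gout{C^*,\pi}\neq G$, which (as $G$ is $K_3$-free, so $C^*$ has no chord) forces a vertex of $G$ strictly inside $C^*$ and hence an inner vertex. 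Since outer-planarity is inherited by subgraphs, $G$ contains none of these three forbidden subgraphs, so by \Cref{prop:square-from-median} it is a square-graph.
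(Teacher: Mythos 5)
Your main argument is correct and is essentially the paper's own proof: the paper likewise fixes an embedding with all vertices on the outer face, observes that $\mathscr{F}(G,\pi)$ then lives entirely at level $0$ so that \Cref{alg:TREE-compose} returns $G$ itself as a single basic QS-graph, and concludes by noting that cubes are not outer-planar; you merely supply the details (no vertex of $G$ lies strictly inside any square, and the unique embedding of $Q_3$ has inner vertices) that the paper leaves implicit. The alternative route you sketch via \Cref{prop:square-from-median} and \Cref{lem:forb-planarE} is also sound, but your primary argument already matches the paper's.
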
 

We proceed with investigating the running time of \Cref{alg:TREE-compose}.

\begin{lemma}
	\Cref{alg:TREE-compose} can be implemented to run in $\mc{O}(|V|\log{|V|})$
	time for every input graph $G=(V,E)$.
\end{lemma}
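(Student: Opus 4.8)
Correctness of \Cref{alg:TREE-compose} is already established in \Cref{lem:alg-correct}, so here I only need to bound its running time. The plan is to analyse the algorithm line by line, using two quantitative facts about a planar median graph $G=(V,E)$ with $n\coloneqq|V|$: it is sparse, i.e.\ $|E|\le 3n-6=\mc{O}(n)$, and it has only $\mc{O}(n)$ squares by \Cref{prop:NrSquares}. The cheap steps are disposed of first. Any graph with more than $3n-6$ edges is rejected in $\mc{O}(n)$ time as non-planar, so we may assume $|E|=\mc{O}(n)$. Then \Cref{alg:check1} costs $\mc{O}(|V|+|E|)=\mc{O}(n)$: planarity testing is linear, and testing whether a planar graph is a median graph is linear by \cite[Cor.~3.4]{Imrich1999}. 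The tests in \Cref{alg:check2} (tree or square) and the computation of a planar embedding $\pi$ in \Cref{alg:pi} are likewise linear, and once $\mathscr{F}$ is available, determining its levels and roots and producing the final ordered composition is a DFS of $\mathscr{F}$, hence $\mc{O}(n)$.

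The construction of $\mathscr{F}=\mathscr{F}(G,\pi)$ in \Cref{alg:tree} is the substantial step. First, enumerate all squares of $G$: there are $\mc{O}(n)$ of them (\Cref{prop:NrSquares}), and since a planar graph has arboricity at most $3$, all of them can be listed in $\mc{O}(n)$ time by the standard $C_4$-listing technique for graphs of bounded arboricity. It remains to compute (i) the covering relation of $\preceq_{G,\pi}$ on the squares and (ii) the innermost square enclosing each vertex of $W\coloneqq V\setminus\bigcup_{C\in\mc{S}(G)}V(C)$. By \Cref{lem:C4inGinGout} every square is almost-inside or almost-outside every other square, so (together with \Cref{lem:1in}) the ``inside'' regions of the squares form a laminar family of Jordan curves in the $\pi$-embedding; consequently (i) is just the Hasse forest of a laminar family and (ii) is point location inside it. Both can be obtained from a single combinatorial plane sweep over the $\mc{O}(n)$ squares and $W$-vertices, maintaining a stack of the squares whose interior currently contains the sweep position; its $\mc{O}(n)$ events are processed after an initial sort, for a total of $\mc{O}(n\log n)$.

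I expect this sweep to be the main obstacle and the sole source of the $\log n$ factor: conceptually it is routine, but one must argue carefully that the laminar structure of the squares is faithfully represented by the $\pi$-embedding (which is exactly what \Cref{lem:C4inGinGout} and \Cref{lem:1in} buy us), and that the combinatorial version of the sweep — using the rotation system of the embedding rather than actual coordinates — correctly detects the nesting of squares and the enclosure of $W$-vertices. Once this is granted, $\mathscr{F}$ is built within the claimed bound.

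Finally, the main \emph{for}-loop headed by \Cref{alg:for1} (with the preliminary block guarded by \Cref{alg:if-disc}) is handled by amortization. A node of $\mathscr{F}$ at level $\ell$ — a square with its four corners, or a leaf in $W$ — enters the set $V'$ only for $i=\ell$ and $i=\ell+1$, so $\sum_i|V'|=\mc{O}(|V(\mathscr{F})|)=\mc{O}(n)$. To extract each factor $G_j$ as an actual subgraph without scanning the adjacency lists of high-degree vertices, I would precompute once in $\mc{O}(n)$ time the list of all edges that lie on a square, each tagged by that square, together with the edges that lie on no square (which form a forest by \Cref{lem:basic-median-props}\,(3b)); the edge set of $G_j$ is then assembled from the constantly many tags $C_j$ and the $\mathscr{F}$-children of $C_j$, since every non-bridge edge of $G_j$ sits on one of those squares while every bridge of $G_j$ is a forest edge, and the forest edges can be distributed to the factors in $\mc{O}(n)$ total time. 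Computing the connected components of $G[V']$ and recording the square $C_j$ used in each amalgamation (\Cref{alg:getS}) then costs $\mc{O}(n)$ overall, e.g.\ via union--find or breadth-first search. Summing the contributions of all lines yields the claimed $\mc{O}(|V|\log|V|)$ running time.
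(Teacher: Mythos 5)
Your proposal is correct and follows essentially the same route as the paper: linear-time planarity/median/square enumeration, an $\mc{O}(n\log n)$ plane sweep to compute the nesting forest $\mathscr{F}$ (the paper adapts the polygon-nesting algorithm of Bajaj on a straight-line embedding, whereas you sweep combinatorially over the rotation system, but this is an implementation detail of the same idea), and the same amortization argument that each node of $\mathscr{F}$ enters $V'$ at most twice so the main loop is linear. The paper is no more rigorous than you are about the delicate point you flag (handling squares that share vertices or edges during the sweep), so your level of detail matches its proof.
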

\begin{proof}
	In the following, let $n=|V|$ and $m=|E|$.  Then $\mc{O}(m+n)$ time
	is required to check whether a graph $G$ is planar \cite{HT:74,CHIBA:85},
	and a median graph \cite{Imrich1999} in \Cref{alg:check1}.  Since a
	planar graph has at most $m\le 3(n-2)$ edges for all $n\geq 3$, we have
	$\mc{O}(m)\subseteq \mc{O}(n)$. Testing for planarity first ensures that no graphs
	that violate this condition are processed further, which implies that
	testing whether $G$ is a median graph can be done in $\mc{O}(n)$ time as well.
	\Cref{alg:check2} can be done in $\mc{O}(n)$ time.  
	
	All squares in a planar graph can be coded efficiently in
	$\mc{O}(m) \subseteq \mc{O}(n)$ time \cite[Thm.\ 20.3 and 20.5]{HammackEtAl2011}.
	Moreover, by \cref{prop:NrSquares}, $G$ has $\mc{O}(n)$ squares.  Hence, all
	squares can be identified in $\mc{O}(n)$ time.
	
	In \Cref{alg:pi}, we compute a planar embedding $\pi$ of $G$ and are, in
	particular, interested in an embedding such that all edges are straight
	lines which can be done in $\mc{O}(n\log n)$ time \cite{FPP:88}.  The
	construction of a forest describing the nesting of the squares and the
	vertices not contained in a square can then be obtained by a modified
	version of the $\mc{O}(n\log n)$ time algorithm solving the polygon
	nesting problem for disjoint (not necessarily convex) polygons
	\cite{Bajaj:90}. In brief, the non-square vertices can be treated as
	``polygons'' consisting of a single vertex and pose no problem. In
	contrast to \cite{Bajaj:90}, we may have squares that share vertices and
	edges. If, during the line-sweep step, a vertex $u$ is encountered that
	is contained in more than one square, one can determine the nesting of
	these incident squares by considering clock-wise ordering of the
	corresponding edges pointing to the right of the sweeping line. If
	multiple squares share the same first edge in this ordering, then their
	nesting is defined by ordering of the second edge. If both edges are
	shared, then the nesting is determined by the horizontal coordinate of
	the fourth point. For $n_x$ squares sharing a point $x$, the nesting can
	thus be computed by sorting the $2n_x$ edges incident with $x$ at most
	thrice, and thus in $\mc{O}(n_x\log n_x)$ time. The total effort for
	disentangling squares with common points thus is also bounded by
	$\mc{O}(n\log n)$.
	
	The forest $\mathscr{F}$ has $\mc{O}(n)$ vertices and edges and can, thus, be
	traversed in $\mc{O}(n)$ time.  Finding induced subgraphs $G[V']$ with $n'$
	vertices and $m'$ edges can be done in $\mc{O}(n'+m')$ time.  In each step $i$
	we have the vertices from graphs level $i-1$ and $i$ with $n_{i-1}$
	vertices and $m_{i-1}$ edges from level $i-1$ and $n_{i}$ vertices and
	$m_{i}$ edges from level $i$. The time needed to compute all the
	subgraphs used in the computation therefore adds up to $\mc{O}(2(m+n))=\mc{O}(n)$.
	
	The overall complexity is therefore $\mc{O}(n\log n)$.
\end{proof}

\begin{figure}[t]
	\begin{center}
		\includegraphics[width=.85\textwidth]{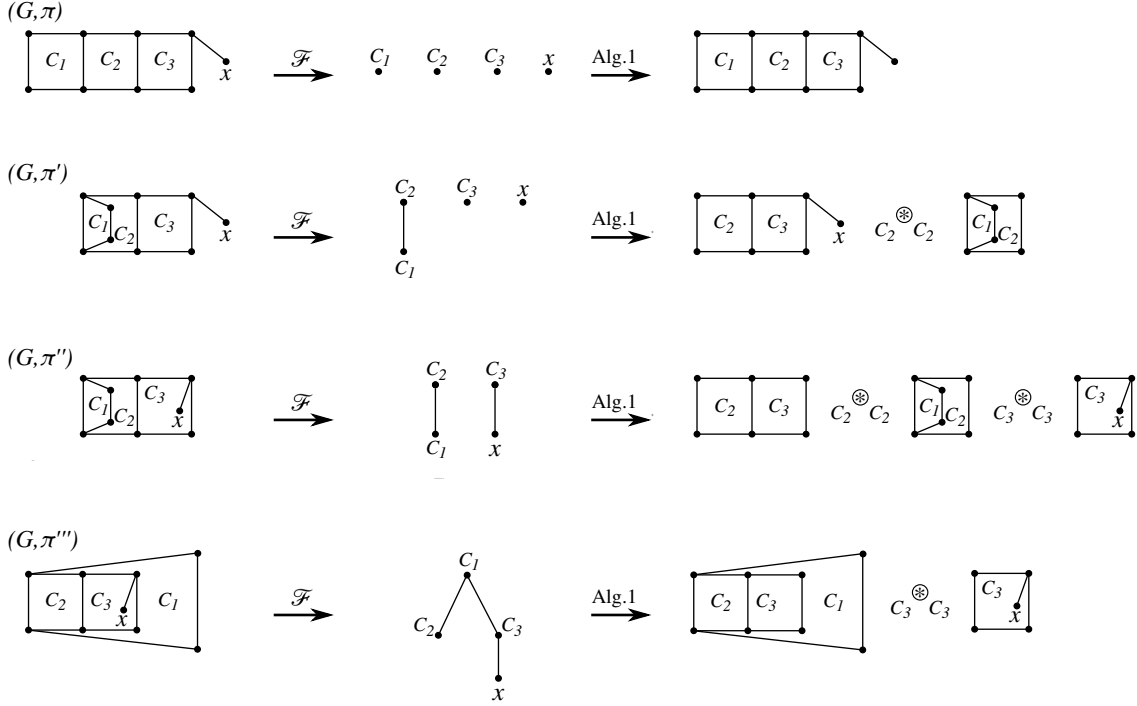}
	\end{center}
	\caption{Four different planar embeddings $\pi$, $\pi'$, $\pi''$, and
		$\pi'''$ of the graph $G$ result in different forests $\mathscr{F}$.
		Depending on the particular embedding, \Cref{alg:TREE-compose} returns
		different solutions: For $(G,\pi)$ we obtain $G=G$; for $(G,\pi')$ we
		obtain $G=G'\cd{C_2}{C_2} H$; for $(G,\pi'')$ we obtain
		$G=(H\cd{C_2}{C_2} H) \cd{C_3}{C_3} G''$ and for
		$(G,\pi''') = G'''\cd{3}{3} G''$.  It can easily be verified that
		the factorization of $(G,\pi'')$ contains irreducible factors only. }
	\label{fig:algo-diff}
\end{figure}

Consider a $\pi$-embedded planar graph
$G$ for which $\GoutS\coloneqq \Gout{C, \pi}$ and
$\GinS\coloneqq \Gin{C, \pi}$ are QS-graphs. In this case, 
$\GoutS$ has an ordered composition of the form
$\GoutS =(\dots (G_1\circledast G_2) \circledast \cdots G_{\ell-1})
\circledast G_{\ell}$ of $\ell\geq 1$ basic QS-graphs. Similarly,
$\GinS = (\dots ( H_1\circledast H_2 ) \circledast \cdots H_{k-1})
\circledast H_{k}$ is composed of $k\geq 1$ basic QS-graphs.  Since
this ordered composition is not associative, we cannot in general write
\begin{equation}\label{eq:order}
\GinS\cd{C}{C} \GoutS = 
(\dots (H_1\circledast H_2) \circledast \cdots H_{k-1}) \circledast H_{k})
\cd{C}{C}\ G_1)\circledast G_2)
\circledast \cdots G_{\ell-1}) \circledast G_{\ell}\,.
\end{equation}
In particular, this expression is not well-defined whenever $C$ is not
contained in $G_1$. \Cref{alg:TREE-compose}, however, makes it
possible to find ordered compositions for both $\GoutS$ and $\GinS$ such
that we can write $G$ in the form of Equ.\ \eqref{eq:order}.

\begin{proposition}\label{prop:find-factor-ordering}
	Let $G$ be a connected $\pi$-embedded planar graph and $C\subseteq G$ be
	a square. Suppose that $\GoutS\coloneqq \Gout{C,\pi}$ and $\GinS\coloneqq \Gin{C,\pi}$ are QS-graphs, and
	the factorization
	$\GoutS =(\dots (G_1\circledast G_2) \circledast \cdots G_{\ell-1})
	\circledast G_{\ell}$ of $\GoutS$ and
	$\GinS = (\dots ( H_1\circledast H_2 ) \circledast \cdots H_{k-1})
	\circledast H_{k}$ of $\GinS$ has been computed with
	\Cref{alg:TREE-compose} w.r.t.\ the $(G,\pi)$-induced planar embedding of
	$\GoutS$ and $\GinS$, respectively.  Then,
	$(\dots (H_1\circledast H_2) \circledast \cdots H_{k-1}) \circledast
	H_{k})\circledast\ G_1 )\circledast G_2) \circledast \cdots G_{\ell-1})
	\circledast G_{\ell}$ is well-defined and yields a factorization of $G$
	into basic QS-graphs.
\end{proposition}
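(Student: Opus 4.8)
The plan is to reduce the statement to the correctness of \Cref{alg:TREE-compose} established in \Cref{lem:alg-correct}, applied separately to $\GinS$ and to $\GoutS$, followed by a bookkeeping argument locating the interface square $C$ in the two resulting ordered compositions. First I would run \Cref{alg:TREE-compose} on $\GinS$ and on $\GoutS$, each equipped with its $(G,\pi)$-induced embedding. By \Cref{lem:alg-correct}, $H_1\circledast\dots\circledast H_k=\GinS$ and $G_1\circledast\dots\circledast G_\ell=\GoutS$ are well-defined ordered compositions into basic QS-graphs, and, reading off \Cref{alg:getS} together with the argument in the proof of \Cref{lem:alg-correct}, on each side the square $C_j$ along which the $j$-th factor is attached ($j\ge2$) already occurs in one of the earlier factors.

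The decisive point is that $C$ occurs in the \emph{first} factor on each side. For $\GinS$ this is immediate: $C$ is the outer boundary of $\GinS$ w.r.t.\ the induced embedding, so $\mathscr{F}(\GinS,\cdot)$ is a tree rooted at $C$ and \Cref{alg:TREE-compose} places $C$ into $H_1$; in particular $C$ is a square of the whole prefix $H_1\circledast\dots\circledast H_k=\GinS$. For $\GoutS$ I would use that the face of $\GoutS$ bounded by $C$ — the face vacated when $V(\GinS)\setminus V(C)$ is deleted from $G$ — carries no vertex or edge of $\GoutS$; hence $C$ is a square-boundary of $\GoutS$, and by \Cref{obs:cycle-bound} one may run the algorithm on the embedding of $\GoutS$ in which this empty face is the outer face. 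In that embedding $C$ is the unique $\preceq$-maximal square, hence the root of $\mathscr{F}(\GoutS,\cdot)$, so \Cref{alg:TREE-compose} places $C$ into $G_1$; thus $C\subseteq G_1$.

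Granting $C\subseteq\GinS$ and $C\subseteq G_1$, I would finish by induction along the composition $H_1\circledast\dots\circledast H_k\circledast G_1\circledast\dots\circledast G_\ell$, read from left to right. The prefix equals $\GinS$ and contains $C$; since $C$ is a square of the basic QS-graph $G_1$ it is a square-boundary of $G_1$ (\Cref{obs:Cube-Embedding} resp.\ \Cref{lem:square-faces-squaregraph}), so $\GinS\cd{C}{C} G_1$ is a legitimate step of a QS-composition, and because $\GinS$ is inserted exactly into the face of $G_1$ bounded by $C$, this identification agrees with the corresponding portion of $\Gin{C,\pi}\cd{C}{C}\Gout{C,\pi}$. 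For every later factor $G_j$ ($j\ge2$), the attaching square $C_j$ lies, by the property recorded in the first paragraph, in one of $G_1,\dots,G_{j-1}$; such a square is not affected by the insertion of $\GinS$ into the face bounded by $C$, so $(\dots)\cd{C_j}{C_j} G_j$ is again legitimate and $G_j$ is again basic. Evaluating the full expression therefore reassembles $\GinS$ glued to $\GoutS=G_1\circledast\dots\circledast G_\ell$ along $C$, which by the remark after \Cref{def:CD} is exactly $G$.

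The main obstacle is the second paragraph: forcing $C$ into the first factor $G_1$ of $\GoutS$. This rests on choosing, among the admissible embeddings of $\GoutS$, the one that makes the empty face bounded by $C$ the outer face, and on checking that this choice is compatible with the way $\mathscr{F}(\GoutS,\cdot)$ and the level sweep of \Cref{alg:for1} enter the correctness proof \Cref{lem:alg-correct}; everything else is routine manipulation of square-boundaries under $\circledast$.
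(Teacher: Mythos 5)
Your first and third paragraphs track the paper's strategy, but the second paragraph --- which you yourself flag as the crux --- contains a genuine gap. The proposition fixes the factorization of $\GoutS$ to be the one \Cref{alg:TREE-compose} produces \emph{w.r.t.\ the $(G,\pi)$-induced embedding of $\GoutS$}. Under that embedding $C$ is an \emph{inner} boundary of $\GoutS$ whose interior face is empty, hence a \emph{leaf} of $\mathscr{F}(\GoutS,\pi)$; a leaf can sit at arbitrary depth, so the level sweep of \Cref{alg:for1} may place $C$ into some later factor $G_j$ with $j>1$ (picture $C$ nested two squares deep inside $\GoutS$). Your fix --- re-embedding $\GoutS$ via \Cref{obs:cycle-bound} so that the empty face bounded by $C$ becomes the outer face --- does make $C$ the root of the resulting forest and forces $C\subseteq G_1$, but it changes the forest and therefore, in general, the factors $G_1,\dots,G_\ell$ themselves. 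You end up proving that \emph{some} factorization of $\GoutS$ concatenates with that of $\GinS$, not that the factorization hypothesized in the statement does, and the induction in your third paragraph then has nothing to say about the named factors.

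The paper's own proof never touches the embedding. It observes that $C$ is the root of $\mathscr{F}(\GinS,\pi)$ and a leaf of $\mathscr{F}(\GoutS,\pi)$, that $\mathscr{F}(G,\pi)$ is exactly these two forests identified at that vertex, and it then traverses $\mathscr{F}(G,\pi)$ by first reproducing the traversal that yields $G_1,\dots,G_\ell$ and only afterwards the one that yields $H_1,\dots,H_k$; the interface square $C$ is already present in the fully assembled $\GoutS$ when the factor $H_1\supseteq C$ is attached via $\cd{C}{C}$, so well-definedness never requires $C$ to lie in the \emph{first} $\GoutS$-factor. (The displayed formula in the statement writes the $H$-factors first, but the construction in the paper's proof is the one just described; only the $\GinS$-side needs its interface square in its first factor, which holds because $C$ is the root of $\mathscr{F}(\GinS,\pi)$.) To repair your argument while keeping its architecture, drop the re-embedding, concatenate in the order $G_1,\dots,G_\ell,H_1,\dots,H_k$, and use your (correct) observations that $C\subseteq H_1$ and that every later attaching square already lies in an earlier factor and is unaffected by insertions into faces it does not bound.
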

\begin{proof}
	\Cref{thm:if-inoutside->QSthree<->cubesq} implies that $G$ is a QS-graph.
	The outer boundary of $\GinS$ w.r.t.\ $(G,\pi)$-induced embedding is the
	square $C$ and thus, $\mathscr{F}(\GinS,\pi)$ is
	connected and rooted at $C$. The square $C$ serves an inner boundary in
	$\GoutS$ w.r.t.\ $(G,\pi)$-induced embedding and there are no further
	vertices of $\GoutS$ inside $C$. Hence $C$ is a leaf in
	$\mathscr{F}(\GoutS,\pi)$. Now, it is easy to see that
	$\mathscr{F}(G,\pi)$ is identical to the forest that is
	obtained from $\mathscr{F}(\GoutS,\pi)$ and
	$\mathscr{F}(\GinS,\pi)$ by identifying the leaf $C$ of
	$\mathscr{F}(\GoutS,\pi)$ with the root $C$ of
	$\mathscr{F}(\GinS,\pi)$.  In a similar fashion as in
	\Cref{alg:TREE-compose}, we traverse $\mathscr{F}(G,\pi)$ but
	use first only the vertices that are contained in
	$\mathscr{F}(\GoutS,\pi)$ in the same order that yields
	the factorization
	$\GoutS =(\dots (G_1\circledast G_2) \circledast \cdots G_{\ell-1})
	\circledast G_{\ell}$ provided by \Cref{alg:TREE-compose} applied on
	$\GoutS$ and afterfwards, we traverse the subtree
	$\mathscr{F}(\GinS,\pi)$ in the same order as in
	\Cref{alg:TREE-compose} to obtain the factorization
	$\GinS = (\dots ( H_1\circledast H_2 ) \circledast \cdots H_{k-1})
	\circledast H_{k}$.  This yields the factorization
	$G = (\dots (H_1\circledast H_2) \circledast \cdots H_{k-1}) \circledast
	H_{k})\cd{C}{C}\ G_1 )\circledast G_2) \circledast \cdots G_{\ell-1})
	\circledast G_{\ell}$.
\end{proof}

The result of \Cref{alg:TREE-compose} depends crucially on the chosen
planar embedding. To see this, consider the vertex $b$ in the graph $G$
shown in \Cref{fig:algo}. Placing $b$ into the face bounded by the square
$C_7$ yields an additional factor that is isomorphic to a square to which
an additional vertex is attached.  Another example is shown in
\cref{fig:algo-diff}.

By \cref{thm:if-inoutside->QSthree<->cubesq} $G=G_1\circledast G_2$
is a QS-graph whenever $G_1$ and $G_2$ are QS-graphs.
We say that a QS-graph $G$ is \emph{irreducible} if $G=G_1\circledast G_2$
implies that $G_1$ or $G_2$ is the unit element, i.e., a square.
Irreducible QS-graphs are for example the ``domino'' $P_3\Box K_2$ or a
square to which a single edge is attached.  The composition of $(G,\pi'')$
in \cref{fig:algo-diff} consists of irreducible QS-graphs only.  The
observations above imply that basic QS-graphs are neither necessarily
irreducible nor that there is a unique way to decompose a planar median
into basic QS-graphs.  We suspect, however, that the following statement is
true:
\begin{conjecture}
	Every planar median graph $G$ has a unique composition
	$G=G_1\circledast \cdots \circledast G_k$, $k\geq 1$ of irreducible
	QS-graphs $G_i$, $1\leq i\leq k$ up to isomorphism and possible re-order
	of the factors.
\end{conjecture}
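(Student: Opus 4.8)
The plan is to prove existence and uniqueness separately. Trees are a degenerate case (the operator $\circledast$ is only defined on graphs containing a square, so the statement is read as vacuous or trivial there), so we may assume $G$ is a cyclic planar median graph, i.e.\ a QS-graph. For existence I would induct on $|V(G)|$: if $G$ is irreducible we are done; otherwise $G=A\circledast B$ along some square $C$ with $A,B$ both different from $C\lc{4}$, and fixing an embedding $\pi$ of $G$ in which $C$ bounds a face we may take $A=\Gin{C,\pi}$, $B=\Gout{C,\pi}$, which by \Cref{thm:if-inoutside->QSthree<->cubesq} are QS-graphs. They are strictly smaller, so by induction each admits a composition into irreducibles; concatenating these and using \Cref{prop:find-factor-ordering} (with the observations on partial associativity preceding it) to splice the two ordered compositions into one well-defined composition yields an irreducible composition of $G$.

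The heart of the matter is uniqueness. Call a square $C\subseteq G$ a \emph{cut square} if $G=\Gin{C,\pi}\circledast\Gout{C,\pi}$ along $C$ with both factors proper, for some embedding $\pi$; by \Cref{obs:amal} this just says $G$ is a square-boundary amalgam along $C$ into two nontrivial induced subgraphs, which is an intrinsic property of $G$. The key ingredient is a \emph{non-crossing lemma}: if $C$ is a cut square and $C'$ is any square of $G$, then $V(C')$ lies entirely in $V(\Gin{C,\pi})$ or entirely in $V(\Gout{C,\pi})$. Indeed, since $V(C)$ separates $V(\Gin{C,\pi})\setminus V(C)$ from $V(\Gout{C,\pi})\setminus V(C)$, if $C'$ had a vertex strictly on each side it would meet $V(C)$ in exactly two vertices $p,q$; as $G$ is triangle-free (\Cref{lem:basic-median-props}) the vertices $p,q$ are nonadjacent both in $C'$ and in $C$, so they would have four distinct common neighbours — the two remaining vertices of $C'$ and the two remaining vertices of $C$ — producing a $K\lc{2,3}$ and contradicting \Cref{lem:basic-median-props}. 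Applying this with $C'$ a second cut square and symmetrising shows that cut squares are pairwise non-crossing, so one can organise them, together with the maximal irreducible induced QS-subgraphs they delimit, into a tree-like incidence structure $\mathscr{B}(G)$ in the spirit of the block--cut tree, keeping in mind that a single cut square may be the gluing square of several pieces at once (as the central square of the plus-shaped pentomino is for its four ``domino'' pieces).

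Next I would show that every irreducible composition $G=K\lc{1}\circledast\cdots\circledast K\lc{p}$ refines this structure. Each gluing square used is a cut square of $G$, since cutting along it exhibits $G$ as a square-boundary amalgam of two nontrivial QS-graphs (each a composition of some of the $K\lc{i}$). Conversely, every cut square $C^*$ of $G$ appears among the gluing squares: by the non-crossing lemma any square, in particular $C^*$, lies inside a single factor $K\lc{j}$ unless it is one of the gluing squares; and if $C^*$ lay strictly inside $K\lc{j}$, then the gluing squares incident to $K\lc{j}$ would, by the non-crossing lemma, all fall on one side of $C^*$ or be split by it, and in either case $K\lc{j}$ would split nontrivially along $C^*$ (using \Cref{lem:in-out-C4} to see the two halves are QS-subgraphs of $K\lc{j}$ having $C^*$ as a face boundary), contradicting irreducibility of $K\lc{j}$. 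Hence each $K\lc{j}$ is exactly a maximal irreducible piece of $G$, a notion that by the non-crossing lemma depends only on $G$; the multiset of isomorphism types $\{K\lc{1},\dots,K\lc{p}\}$ is therefore an invariant of $G$, and the permitted ``re-order of the factors'' corresponds to different linear extensions of the partial order underlying $\mathscr{B}(G)$ and different choices of gluing isomorphism, neither of which changes the multiset.

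The main obstacle is the second step together with the bookkeeping of ``hub'' cut squares: a single cut square can be the gluing square of arbitrarily many irreducible pieces, so ``cutting $G$ along all its cut squares'' does not literally split $G$ into the $K\lc{j}$ the way deleting cut vertices splits a graph into blocks. Making the incidence structure $\mathscr{B}(G)$ precise — e.g.\ as a bipartite tree whose colour classes are the maximal irreducible pieces and the cut squares shared by two or more of them, analogous to the block--cut tree or the SPQR-tree — and verifying that every irreducible composition is a traversal of $\mathscr{B}(G)$, is where the real work lies. A secondary nuisance is that one cut square $C$ may admit several amalgam splittings $\Gin{C,\pi}\circledast\Gout{C,\pi}$ (when $G-V(C)$ has three or more components), all of which must be reconciled inside $\mathscr{B}(G)$; handling trees and the unit element $C\lc{4}$ as boundary cases rounds out the argument.
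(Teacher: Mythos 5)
The first thing to say is that the paper does not prove this statement: it appears only as a \emph{conjecture}, explicitly flagged as open in \Cref{sec:sum} (``does every planar median graph admit a unique composition into irreducible factors?''). So there is no proof in the paper to compare against, and your task reduces to whether your proposal is itself a complete proof. It is not. The parts you do carry out are sound: the existence half via induction and \Cref{prop:find-factor-ordering} is fine, and your non-crossing lemma is correct --- indeed it is essentially \Cref{lem:C4inGinGout} of the paper (all squares of a median graph lie entirely in $\Gin{C,\pi}$ or $\Gout{C,\pi}$), and your direct $K\lc{2,3}$ argument for it is valid. The observation that every cut square strictly inside an irreducible factor $K\lc{j}$ would force $K\lc{j}$ to split (via \Cref{thm:if-inoutside->QSthree<->cubesq} applied to the induced embedding) is also a genuinely useful step.

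The gap is the one you name yourself: the construction of the incidence structure $\mathscr{B}(G)$ and the verification that \emph{every} irreducible composition is a traversal of it is deferred (``where the real work lies''), and that is precisely the content of the conjecture rather than a routine bookkeeping exercise. Concretely, what is missing is: (i) a proof that the set of cut squares and the resulting ``maximal irreducible pieces'' are independent of the planar embedding --- your cut squares are defined by an existential quantifier over embeddings, and the paper's own discussion (\cref{fig:algo-diff}) shows that \Cref{alg:TREE-compose} produces genuinely different factorizations for different embeddings, so embedding-independence of the \emph{irreducible} pieces is exactly the nontrivial claim; (ii) the hub-square issue you raise, namely that a single cut square $C$ may be the gluing square of several pieces and may admit several inequivalent amalgam splittings $\Gin{C,\pi}\cd{C}{C}\Gout{C,\pi}$ when more than two ``lobes'' meet at $C$, which must be shown not to produce distinct factor multisets; and (iii) the degeneracy that $C\lc{4}$ is vacuously irreducible under the paper's definition, so that $G\circledast C\lc{4}\simeq G$ already violates uniqueness of $k$ unless unit factors are excluded by fiat. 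As written, the proposal is a reasonable programme for attacking the conjecture, but it does not settle it.
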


\section{Summary and Outlook}
\label{sec:sum}

In this contribution, we have provided novel characterizations for planar median graph.
\Cref{thm:planar-mg<->iso-cycles} makes use of forbidden subgraphs and the structure
of their isometric cycles. \Cref{thm:if-inoutside->mg<->pmg}, furthermore, shows that
it is sufficient to consider the induced subgraphs $\Gin{C,\pi}$ and $\Gout{C,\pi}$
almost-inside and almost-outside of an arbitrary square $C\subseteq G$. A more
constructive characterization is obtained in terms of the gluing operation
$\circledast$ for QS-graphs that stepwise identifies square-boundaries along which
two graphs are identified. \Cref{thm:mpg<->Qthree} shows that planar median graphs
are exactly the union of QS-graphs and trees. The operation $\circledast$ corresponds
to a specific amalgamation of graphs and provides a corresponding characterization of
planar median graphs by amalgamation (cf.~\cref{thm:amalgam}). The structure of
QS-graphs leads to an $\mc{O}(n\log(n))$ time algorithm that computes an ordered
composition $G=G_1\circledast \cdots \circledast G_k$ into square-graphs and cubes
for a planar median graph $G$ with $n$ vertices.

It would be interesting to know if these results might be extended to planar partial
cubes (noting that any median graph is a partial cube). As with median graphs,
partial cubes arise by isometric expansions \cite{Chepoi:88}, and planar partial
cubes can be characterized by an expansion procedure \cite{Desgranges:17} (see also
\cite{Peterin:08}). The $\circledast$ operation is closely related to the
non-crossing 2-face expansion employed in \cite{Desgranges:17} used to characterize
partial cubes: $H$ is a 2-face expansion of $G$ if $G_1$ and $G_2$ have plane
embeddings such that $G' \coloneqq G_1 \cap G_2$ lies on a face in both the
respective embeddings. If $G'$ is an edge $e=uv$, then $u$ and $v$ are trivially
located on the same face in $G_1$ and $G_2$. Restricting $G'$ to be an edge leads to
a definition of a ``restricted 2-face expansion'' that can be expressed in terms of
our gluing operation: We expand only $G_1$ on this edge (to get a square $C$) and
only expand $G_2$ on this edge (to get a square $C'$) and then set $H = G_1
\cd{C}{C'} G_2$. Every restricted 2-face expansion can therefore be expressed as
``expand single edges in $G_1$ and $G_2$ and glue together $G_1$ and $G_2$ along the
resulting squares via the $\circledast$ operation''. It could therefore be worth
while investigating if a variant of the $\circledast$ operation could be used to give
new insights into the structure of planar partial cubes.

In another direction, define a planar median graph $G$ as \emph{irreducible} if $G =
G_1\circledast G_2$ implies that $G_1$ or $G_2$ is the unit element, i.e., a square.
It would be interesting to understand whether one can decompose a given planar median
graph into irreducible factors in polynomial-time. Moreover, does every planar median
graph admit a unique composition into irreducible factors? \Cref{alg:TREE-compose}
depends crucially on the particular planar embedding of $G$. Does
\cref{alg:TREE-compose} yield the same factors (possibly in a different order) if the
planar embeddings differ only by the choice of the outer boundary? Answers to the
latter question would possibly provide an avenue to determine the irreducible factors
-- at least for 3-connected planar median graphs, since all their planar embedding
are equivalent. Moreover, one may ask how different forests
$\mathscr{F}(G,\preceq_{G,\pi})$ and $\mathscr{F}(G,\preceq_{G,\pi'})$ are related to
each other for different embeddings $\pi$ and $\pi'$ in the case that $G$ has unique
composition of irreducible QS-graphs?

Finally, \Cref{thm:planar-mg<->iso-cycles} provides a characterization of planar
median graphs in terms of forbidden subgraphs and the structure of their isometric
cycles. Note, there is no forbidden subgraph characterization of planar median
graphs, since the property of being a median graph is not hereditary. However, it is
natural to ask whether a planar median graph can be solely characterized amongst
median graphs in terms of a collection of forbidden subgraphs or minors. A good
starting point for answering this question could be to understand how either 2-face
expansions, convex face expansions or square-boundary amalgamations might shed new
light on the forbidden subgraph theorem for square-graphs mentioned above in
Proposition~\ref{prop:square-from-median}, with the view to extending these
considerations to planar median networks.

\bibliographystyle{plainnat}
\bibliography{planar-median-graphs}

\end{document}